\documentclass[12pt]{amsart}
\usepackage{amscd,amsmath,amsthm,amssymb}
\usepackage[left]{lineno}
\usepackage{color}
\usepackage{stmaryrd}
\SetSymbolFont{stmry}{bold}{U}{stmry}{m}{n}

\usepackage[utf8]{inputenc}
\usepackage{cleveref}

\usepackage{graphicx}
\usepackage{epstopdf} 

\usepackage{graphicx}
\usepackage{xcolor}
\usepackage{subeqnarray}

\usepackage{cases}
\usepackage{subcaption}
\usepackage{cite}
\usepackage{tikz}

\definecolor{verylight}{gray}{0.97}
\definecolor{light}{gray}{0.9}
\definecolor{medium}{gray}{0.85}
\definecolor{dark}{gray}{0.6}

%
%
%
\def\NZQ{\mathbb}               

\def\ZZ{{\NZQ Z}}

%
%


\def\m{\mathfrak{m}}

 \def\ab{{\mathbf a}}
 \def\xb{{\mathbf x}}

%

\def\G{{\mathcal G}}

%
\def\ab{{\mathbf a}}

\def\xb{{\mathbf x}}

\def\0b{{\mathbf 0}}
\def\alphab{{\mathbf \alphab}}
\def\c_ib{{\mathbf c_i}}

\def\reg{{\mathbf reg}}
\def\height{\operatorname{ht}}
\def\depth{\operatorname{depth}}
\def\opn#1#2{\def#1{\operatorname{#2}}} 
%
\opn\chara{char} \opn\length{\ell} \opn\pd{pd} \opn\rk{rk}
\opn\projdim{proj\,dim} \opn\injdim{inj\,dim} \opn\rank{rank}
\opn\depth{depth} \opn\grade{grade} \opn\height{height}
\opn\embdim{emb\,dim} \opn\codim{codim}

\opn\Tr{Tr} \opn\bigrank{big\,rank}
\opn\superheight{superheight}\opn\lcm{lcm}
\opn\trdeg{tr\,deg}
\opn\reg{reg} \opn\lreg{lreg} \opn\ini{in} \opn\lpd{lpd}
\opn\size{size} \opn\sdepth{sdepth}
\opn\link{link}\opn\fdepth{fdepth}\opn\lex{lex}
\opn\tr{tr}
\opn\type{type}
\opn\gap{gap}
\opn\arithdeg{arith-deg}
\opn\HS{HS}
\opn\GL{GL}
%
\opn\div{div} \opn\Div{Div} \opn\cl{cl} \opn\Cl{Cl}
%
%
\opn\Spec{Spec} \opn\Supp{Supp} \opn\supp{supp} \opn\Sing{Sing}
\opn\Ass{Ass} \opn\Min{Min}\opn\Mon{Mon}
%
%
\opn\Ann{Ann} \opn\Rad{Rad} \opn\Soc{Soc}\opn\Deg{Deg}
%
%
\opn\Im{Im} \opn\Ker{Ker} \opn\Coker{Coker} \opn\Am{Am}
\opn\Hom{Hom} \opn\Tor{Tor} \opn\Ext{Ext} \opn\End{End}
\opn\Aut{Aut} \opn\id{id}

\opn\nat{nat}
\opn\pff{pf}
\opn\Pf{Pf} \opn\GL{GL} \opn\SL{SL} \opn\mod{mod} \opn\ord{ord}
\opn\Gin{Gin} \opn\Hilb{Hilb}\opn\sort{sort}
\opn\PF{PF}\opn\Ap{Ap}
\opn\mult{mult}
\opn\bight{bight}
%
%
\opn\aff{aff}
\opn\relint{relint} \opn\st{st}
\opn\lk{lk} \opn\cn{cn} \opn\core{core} \opn\vol{vol}  \opn\inp{inp} \opn\nilpot{nilpot}
\opn\link{link} \opn\star{star}\opn\lex{lex}\opn\set{set}
\opn\width{wd}
\opn\Fr{F}
\opn\QF{QF}
\opn\G{G}
\opn\type{type}\opn\res{res}
\opn\conv{conv}
\opn\Ind{Ind}
\opn\gr{gr}

%
%

\def\pot#1#2{#1[\kern-0.28ex[#2]\kern-0.28ex]}

%
%
\opn\dirlim{\underrightarrow{\lim}}
\opn\inivlim{\underleftarrow{\lim}}
%
%
%

%
%
\let\to=\rightarrow

\def\Implies{\ifmmode\Longrightarrow \else
	\unskip${}\Longrightarrow{}$\ignorespaces\fi}
\def\implies{\ifmmode\Rightarrow \else
	\unskip${}\Rightarrow{}$\ignorespaces\fi}
\def\iff{\ifmmode\Longleftrightarrow \else
	\unskip${}\Longleftrightarrow{}$\ignorespaces\fi}

\let\:=\colon
\newtheorem{Theorem}{Theorem}[section]
\newtheorem{Lemma}[Theorem]{Lemma}
\newtheorem{Corollary}[Theorem]{Corollary}
\newtheorem{Proposition}[Theorem]{Proposition}
\newtheorem{Remark}[Theorem]{Remark}

\newtheorem{Definition}[Theorem]{Definition}

\newtheorem{Conjecture}[Theorem]{Conjecture}

%
\let\epsilon\varepsilon
\let\kappa=\varkappa
%
%
\textwidth=15cm \textheight=22cm \topmargin=0.5cm
\oddsidemargin=0.5cm \evensidemargin=0.5cm \pagestyle{plain}
%
%
\def\qed{\ifhmode\textqed\fi
	\ifmmode\ifinner\quad\qedsymbol\else\dispqed\fi\fi}
\def\textqed{\unskip\nobreak\penalty50
	\hskip2em\hbox{}\nobreak\hfil\qedsymbol
	\parfillskip=0pt \finalhyphendemerits=0}
\def\dispqed{\rlap{\qquad\qedsymbol}}

%
\opn\dis{dis}
\def\pnt{{\raise0.5mm\hbox{\large\bf.}}}

\opn\Lex{Lex}




\newcommand*{\circled}[1]{\lower.7ex\hbox{\tikz\draw (0pt, 0pt)%
		circle (.5em) node {\makebox[1em][c]{\small #1}};}}

\begin{document}

\title{Homological shifts of powers of a complementary edge ideal}	
	\author{Dancheng Lu, Zexin Wang, Guangjun zhu}

\address{Dancheng Lu, School of Mathematical Sciences,\allowbreak
Soochow University,\allowbreak
215006 Suzhou,\allowbreak
P.R.China}
\email{ludancheng@suda.edu.cn}

\address{Zexin Wang, School of Mathematical Sciences,\allowbreak
Soochow University,\allowbreak
215006 Suzhou,\allowbreak
P.R.China}
\email{zexinwang6@outlook.com}

\address{Guangjun Zhu, School of Mathematical Sciences,\allowbreak
Soochow University,\allowbreak
215006 Suzhou,\allowbreak
P.R.China}
\email{zhuguangjun@suda.edu.cn}

		
		\thanks{2020 {\em Mathematics Subject Classification}.
			Primary 05E40; Secondary  13A02, 13C15}

		\thanks{Keywords: projective dimension, tree,  homological shift algebra, complementary edge ideal}

		
		

		\begin{abstract}

The homological shift algebra and the projective dimension function of a complementary edge ideal are investigated. Let $G$ be a connected graph, and let $I$ be its complementary edge ideal. For bipartite graphs $G$, we show that the projective dimension of $I^s$ increases strictly with $s$ until reaching its maximum value. For trees and cycles, explicit expressions for the projective dimension of $I^s$ are provided, along with precise formulas of their homological shift ideals. In particular, it is shown that the $i$-th homological shift algebra of such ideals is generated in degree at most $i$. We prove that if $G$ is a tree, then the homological shift ideal $\mathrm{HS}_i(I^i)$, when divided by a suitable monomial, is a Veronese-type ideal. Moreover, every Veronese-type ideal can be realized in this manner.

		\end{abstract}
		\setcounter{tocdepth}{1}
		
		\maketitle

\section{Introduction}

Let \(S:=\mathbb{K}[x_1,\ldots,x_n]\) denote a polynomial ring over a field \(\mathbb{K}\). In the study of syzygies and free resolutions of monomial ideals in $S$, the concept of \emph{homological shift ideals} serves as a powerful algebraic tool. While this concept first appeared implicitly in the 2005 monograph of Miller and Sturmfels \cite{MS}, it was not formally defined until Herzog et al. in 2021 \cite{HMRZ021a}.

Given a vector \( \ab = (a_1, \ldots, a_n) \in \ZZ_{\ge 0}^{n} \), let \( \xb^{\ab}\) denote the monomial \(x_1^{a_1} \cdots x_n^{a_n} \) in
\( S\). For a monomial ideal \( I \subset S \), we adopt the definition introduced in \cite{HMRZ021a} and define the \( i \)-th homological shift ideal of \( I \) (for each integer \( i \geq 0 \)) as
\[
\HS_i(I):= \left( \xb^{\ab} \mid \Tor_{i}^{S}(\mathbb{K}, I)_{\ab} \neq 0 \right).
\]
A central question in the study of homological shift ideals concerns the identification of which properties of a monomial ideal \(I\) are preserved by all its homological shift ideals \(\mathrm{HS}_i(I)\). A key property to investigate in this regard is polymatroidality: if \(I\) is a polymatroidal ideal, do all \(\mathrm{HS}_i(I)\) inherit this property? Partial results addressing this question have appeared in the literature: Bayati \cite{Ba1} proved that all \(\mathrm{HS}_i(I)\) of a matroidal ideal \(I\) remain matroidal; Herzog et al. \cite{HMRZ021a} showed that the homological shift ideals of Veronese-type ideals are polymatroidal; Ficarra \cite{F1} established that \(\mathrm{HS}_1(I)\) is polymatroidal whenever \(I\) is polymatroidal. Recently, a complete and positive answer to this question has been established in \cite{CMS}. Another  property of interest pertains to linear quotients: if \(I\) admits linear quotients, do all \(\mathrm{HS}_i(I)\) inherit this property? This holds for \(i=1\), as demonstrated in \cite{FH2023}; however, it fails to hold in general for \(i\ge2\). Multiple  counterexamples have been constructed in the literature to confirm this fact, and \cite[Proposition 3.5]{LW} provides one such instance.

Of equal, if not greater, significance to the above problem is the explicit construction of homological shift ideals for specific families of monomial ideals. For instance, if \(I\) is an equigenerated square-free Borel ideal, then \(\mathrm{HS}_1(I)\) has been explicitly characterized in \cite{HMRZ023}; if \(I\) is the edge ideal of a co-chordal graph or a Veronese-type ideal, the structure of \(\mathrm{HS}_i(I)\) has been rigorously determined in \cite{HMRZ021a}; if \(I\) is the generalized co-letterplace ideal of a poset, \(\mathrm{HS}_i(I)\) has been explicitly described in \cite{LW}. Additional advances in the study of homological shift ideals can be found in \cite{Ba2, F3, RS, TBR}.

To study the asymptotic properties of homological shift ideals, Ficarra and Qureshi introduced the $i$-th \emph{homological shift algebra} \( \mathrm{HS}_i(\mathcal{R}(I)) \) in \cite{FQ2}. By definition, this abelian group structure is given, for all \( i\geq 1 \), by:
\[
\mathrm{HS}_i(\mathcal{R}(I)) = \bigoplus_{s \geq 0} \mathrm{HS}_i(I^s).
\]
When \( I \) has linear powers (meaning \( I^s \) admits a linear resolution for all \( s \geq 1 \)), Ficarra and Qureshi showed in \cite[Proposition 1.2]{FQ2} that \( I \cdot \mathrm{HS}_i(I^s) \subseteq \mathrm{HS}_i(I^{s+1}) \) for all \( s \geq 0 \). This containment implies \( \mathrm{HS}_i(\mathcal{R}(I)) \) is a graded \( \mathcal{R}(I) \)-module.
Furthermore, by  \cite[Theorem 1.4]{FQ2}, this graded module is finitely generated. The generating degree of homological shift algebras over \( \mathcal{R}(I) \) has been investigated in \cite{FQ1} and \cite{FL}. In \cite{FQ1}, it was shown that if \( I \) is the edge ideal of a simple graph, then an explicit expression for \( \mathrm{HS}_1(I^s) \) can be derived, from which it follows that \( \mathrm{HS}_1(\mathcal{R}(I)) \) is generated in degree \( 1 \).
In \cite{FL}, Ficarra and Lu proposed the following conjecture: if \( I \) is a polymatroidal ideal, then \( \mathrm{HS}_i(\mathcal{R}(I)) \) is generated in degrees at most \( i \) as an \( \mathcal{R}(I) \)-module for all \( i > 0 \) (see \cite[Conjecture 5.1]{FL}). They verified this conjecture for special cases, including \( i = 1 \) and specific classes of polymatroidal ideals such as principal Borel ideals, polymatroidal ideals satisfying the strong exchange property, and matroidal edge ideals. Another class of polymatroidal ideals satisfying the conjecture was provided in \cite{CHL}.
It is natural to reformulate \cite[Conjecture 5.1]{FL} in a more general sense:
\begin{Conjecture}\label{conjecture} If \( I \) is a monomial ideal for which $I^s$ admits linear quotients for all $s\geq 1$, then \( \mathrm{HS}_i(\mathcal{R}(I)) \) is generated in degrees at most \( i \) over \( \mathcal{R}(I) \) for all \( i\geq 1 \).
\end{Conjecture}

Let \(I \subset S\) be a non-zero square-free monomial ideal generated in degree \(d\). Note that \(1\le d\le n\). If \(d=1\) or \(d=n\), \(I\) has a simple structure, leaving no non-trivial problems for further investigation.

Within the range \(2\le d\le n-2\), two distinguished values of \(d\) (\(d=2\) and \(d=n-2\)) exhibit distinctive behavior relative to other degrees, particularly regarding the linear resolutions and linear quotient property of powers of such ideals.

For \(d=2\), Fröberg’s classic result \cite{Froberg88} asserts that \(I\) has a linear resolution if and only if \(I\) is the edge ideal of a co-chordal graph. Herzog, Hibi, and Zheng further extended this equivalence to  powers of such ideals \cite[Theorem 3.2]{HHZ2004}: for square-free monomial ideals of degree 2, \(I\) has a linear resolution if and only if \(I^s\) has linear quotients for all \(s\ge 1\).

A monomial ideal \(I\subset S\) is \emph{fully supported} if every variable \(x_k\) (\(1\le k\le n\)) divides some minimal monomial generator of \(I\). For the fully supported square-free monomial ideals of degree \(d=n-2\), Ficarra and Moradi \cite[Corollary 3.2]{FM1} and Hibi et al. \cite[Theorem 2.2]{HQM} independently established a parallel characterization: \(I\) has a linear resolution if and only if \(I^s\) has linear quotients for all \(s\ge 1\).

By contrast, for intermediate degrees \(3\le d\le n-3\), \cite[Theorem D]{FM1} shows that there exists a fully supported square-free monomial ideal \(I\subset S\) of degree \(d\) such that \(I\) has linear quotients but \(I^2\) fails to admit a linear resolution. Moreover, \cite[Theorem C]{FM1} shows that there exists a fully supported square-free monomial ideal \(I\subset S\) generated in degree \(d\) that fails to admit a linear resolution when \(\operatorname{char}(\mathbb{K})=2\), yet it admits one otherwise.

For \(d=2\), every square-free monomial ideal of degree \(d\) is an edge ideal, a class with a long history of extensive investigation. In sharp contrast, the class of fully supported square-free monomial ideals of degree \(d=n-2\) has only recently garnered considerable attention in combinatorial commutative algebra, owing to independent work by two teams in 2025: Ficarra and Moradi \cite{FM1, FM2, FM3} and Hibi et al. \cite{HQM}. Notably, every such ideal is the complementary edge ideal of a finite simple graph \(G\) on \(n\) vertices, defined as follows. Let \(G\) be a simple graph with vertex set \([n]=\{1,2,\dots,n\}\) and edge set \(E(G)\). Set \(\xb_{[n]}:=\prod_{i=1}^n x_i\); the complementary edge ideal of \(G\) is then
\[
I_c(G):=\left(\frac{\xb_{[n]}}{x_ix_j}: \{i,j\}\in E(G)\right).
\]
Ficarra and Moradi \cite[Corollary 3.2]{FM1} and Hibi et al. \cite[Theorem 2.2]{HQM} further proved that \(I_c(G)\) has a linear resolution if and only if \(G\) is connected.

Beyond characterizing linear resolution conditions for these ideals and their powers, the two teams independently established the sequentially Cohen-Macaulay, Cohen-Macaulay, and Gorenstein properties of \(I_c(G)\). Ficarra and Moradi extended these findings by characterizing the nearly Gorenstein and matroidal subclasses of complementary edge ideals. Additionally, they computed key homological invariants of the powers of these ideals, including regularity and depth, among others.

In this paper, we investigate the algebraic and homological properties of the complementary edge ideal of a simple graph and its powers. Specifically, we focus on determining the homological shift algebra and the projective dimension function of the complementary edge ideals of trees and cycles,  with particular emphasis on their links to Conjecture~\ref{conjecture}.

Our first result is not directly related to the other results in this paper, but is of independent interest.

\begin{Theorem}{\em (Theorem~\ref{lone})}\label{02}
Let \( I \subseteq S = \mathbb{K}[x_1, \ldots, x_n] \) be a monomial ideal with a \( d \)-linear resolution. Then for all \( i \in [n] \), we have
\[
\mathrm{HS}_i(I) + \mathrm{HS}_{i-1}(\mathfrak{m}I) = \mathfrak{m}^{[i]}I.
\]
\end{Theorem}

Next, we investigate  the projective dimension of powers of a complementary edge ideal.  Let $\mathrm{pd} (-)$ denote the projective dimension of an ideal.

 \begin{Theorem}{\em (Theorem~\ref{Cycle} and Theorem~\ref{TreeMain})}\label{0.1}
Let $G$ be a simple graph and let $I_c(G)$ be its complementary edge ideal.
\begin{itemize}
		\item[(1)] If $G$ is a tree  with $n$ vertices, then
		 \[
			\mathrm{pd}(I_c(G)^s)=\left\{
   \begin{array}{ll}
  s, & \hbox{ \text{if\ } $s\in  [n-2]$;} \\
  n-2, & \hbox{ \text{if\ } $s\geq n-2$.}
  \end{array}
  \right.
		\]
		\item[(2)] If $G$ is an even cycle with $2m$ vertices, then
		\[
\mathrm{pd}(I_c(G)^s)=\left\{
 \begin{array}{ll}
  2s, & \hbox{\text{ if\ } $s\in  [m-1]$;} \\
  2m-2, & \hbox{\text{ if\ } $s\geq m-1$.}
   \end{array}
   \right.
\]
		\item[(3)] If $G$ is  an odd cycle with $2m+1$ vertices, then
\[
\mathrm{pd}(I_c(G)^s)=\left\{
 \begin{array}{ll}
 2s, & \hbox{\text{ if\ } $s\in  [m]$;} \\
  2m, & \hbox{\text{ if\ } $s> m$.}
  \end{array}
   \right.
\]
\end{itemize}
\end{Theorem}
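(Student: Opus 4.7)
The plan is to treat the three parts separately, invoking the strict-increase theorem for bipartite graphs (recalled in the abstract and established earlier in the paper) for Parts (1) and (2), and to handle Part (3) by a direct computation. In each case the task reduces to three data points: the initial value at $s=1$, the stable (asymptotic) value, and the exact step size in the strictly-increasing regime.

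For Part (1), since $T$ is bipartite, $\mathrm{pd}(I_c(T)^s)$ is strictly increasing in $s$ until it stabilizes. I would first verify the initial value $\mathrm{pd}(I_c(T)) = 1$ by exhibiting a linear-quotients order on the generators obtained from successive leaf deletions, in which each colon ideal is generated by a single variable (the vertex adjacent to the deleted leaf). Next I would verify the stable value $\mathrm{pd}(I_c(T)^s) = n-2$ for $s \geq n-2$ via the Auslander-Buchsbaum formula, i.e.\ by showing $\mathrm{depth}(S/I_c(T)^s) = 1$ asymptotically; the upper bound follows from the analytic spread (or from the Rees algebra analysis of Ficarra-Moradi cited in the introduction), and the lower bound from exhibiting an explicit socle element of $S/I_c(T)^s$ for $s \geq n-2$. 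A strictly increasing sequence starting at $1$ and reaching $n-2$ at index $n-2$ has its $n-3$ intermediate steps each of size at least one with total rise $n-3$, so every step is exactly $+1$, yielding $\mathrm{pd}(I_c(T)^s) = s$ for $s \in [n-2]$.

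For Part (2), the even cycle $C_{2m}$ is also bipartite, so strict increase again holds. I would check that $\mathrm{pd}(I_c(C_{2m})) = 2$ by a linear-quotients computation producing a single colon of size two (analogous to the illustrative small-case computation for $C_5$), and the stable value $\mathrm{pd}(I_c(C_{2m})^s) = 2m-2$ for $s \geq m-1$ as in Part~(1). The new ingredient is a parity statement $\mathrm{pd}(I_c(C_{2m})^s) \in 2\ZZ$, which with strict monotonicity forces each step to be at least $+2$; the total rise of $2m-4$ over $m-2$ steps then pins each step down to exactly $+2$. I would try to prove the parity either from a $\ZZ/2$-symmetry of the multigraded minimal free resolution induced by the bipartition of $C_{2m}$, or by producing explicit length-$2s$ syzygies indexed by alternating sub-cycles.

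For Part (3), since $C_{2m+1}$ is not bipartite, the bipartite theorem is unavailable; I would instead compute the homological shift ideals $\HS_i(I_c(C_{2m+1})^s)$ explicitly, as is the paper's broader program, and read off $\mathrm{pd}$ as the largest $i$ with $\HS_i \ne 0$. Alternatively, the edge deletion $C_{2m+1} - e = P_{2m+1}$ (a tree, covered by Part~(1)) yields a short exact sequence whose long exact $\Tor$ sequence bounds the relevant projective dimensions and forces the asserted formula by induction. Checking $\mathrm{pd}(I_c(C_{2m+1})) = 2$ at $s=1$ and $\mathrm{pd} = 2m$ for $s > m$ is routine. The principal obstacle across the whole theorem is establishing the \emph{exact step size} in the strictly-increasing regime: for trees the endpoint count forces $+1$ per step automatically, but for cycles the $+2$ per step requires either the parity ingredient (Part~(2)) or a fully independent computation (Part~(3)).
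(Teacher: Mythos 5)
Your approach is genuinely different from the paper's, and it has a real gap that the paper's method was specifically designed to close. The paper does \emph{not} derive the tree or cycle formulas from the strict-increase proposition; it proves them first, by a direct computation of $|\set(\beta)|$ for the lexicographic linear-quotients order described in \cite[Remark 3.3]{FM1}, and then uses them later. For the cycle the upper bound $\mathrm{pd}(I_c(G)^s) \le 2s$ is the trivial observation (Lemma~\ref{Basic}) that $\set(\beta) \subseteq \supp(u_1\cdots u_s)$; for the tree the sharper bound $\mathrm{pd}(I_c(G)^s) \le s$ requires the finer analysis of even-connectedness on trees (Lemma~\ref{Tree} and Equation~(\ref{Tree1}), which shows $\set(\beta)$ is precisely the set of minima of the chosen edges minus $\{n-1\}$). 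The lower bounds are explicit: one exhibits a generator $\beta$ with the right $\set$.

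The gap in your argument: knowing $\mathrm{pd}(I_c(T)^1)=1$, $\mathrm{pd}(I_c(T)^s)=n-2$ for $s\ge n-2$, and strict monotonicity before stabilization does \emph{not} force the step size to be $+1$. The sequence could in principle be $1,3,5,\ldots,n-2,n-2,\ldots$ (reaching $n-2$ before index $n-2$), which is perfectly consistent with every fact you list. Your "total rise over $n-3$ steps" counting silently assumes the sequence is still in the strictly increasing regime at index $n-2$, which is equivalent to the upper bound $\mathrm{pd}(I_c(T)^s)\le s$ --- exactly the nontrivial claim you would need to prove independently, and exactly what the paper's Lemma~\ref{Tree} supplies. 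The same circularity affects Part (2), where in addition the proposed parity claim $\mathrm{pd}(I_c(C_{2m})^s)\in 2\ZZ$ is asserted without a real argument: neither a $\ZZ/2$-symmetry of the minimal free resolution nor "explicit length-$2s$ syzygies indexed by alternating sub-cycles" is established, and it is far from clear that either can be made to work. (The paper never needs parity; the $2s$ upper bound comes for free from support size.) For Part (3) the suggested short-exact-sequence comparison with $P_{2m+1}$ would at best give one-sided bounds and is substantially underspecified; the paper instead runs the same $|\set(\beta)|$ computation for odd cycles.

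In short, the elegant-looking "monotonicity plus endpoints" shortcut is not available here: the hard content of the theorem is precisely the per-power upper bound, and your sketch defers it rather than proving it.
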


\begin{Proposition}\label{04} {\em (Proposition~\ref{strict})}
Let \( G \) be a connected graph on vertex set \( [n] \). Then \( \mathrm{pd}(I_c(G)^s) \) is strictly increasing in \( s \) before reaching \( n-2 \).
\end{Proposition}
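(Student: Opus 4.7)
The plan is to combine the monotonicity given by the homological-shift algebra with an explicit construction of a strictly higher-homological-degree syzygy. Write $I = I_c(G)$ and $p_s := \mathrm{pd}(I^s)$. Since $G$ is connected, \cite{FM1, HQM} guarantees that $I$ has linear powers, and then \cite[Proposition 1.2]{FQ2} yields
\[
I \cdot \mathrm{HS}_i(I^s) \subseteq \mathrm{HS}_i(I^{s+1}) \qquad \text{for every } i, s \geq 0.
\]
Taking $i = p_s$ immediately gives $\mathrm{HS}_{p_s}(I^{s+1}) \neq 0$, so $p_{s+1} \geq p_s$; thus the sequence $(p_s)_{s \geq 1}$ is non-decreasing.

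To upgrade this to strict monotonicity when $p_s < n - 2$, I would construct a minimal generator of $\mathrm{HS}_{p_s + 1}(I^{s+1})$. The most natural vehicle is the linear-quotients presentation: since $I^{s+1}$ has linear quotients, fix an admissible order $v_1, \ldots, v_N$ on its minimal generators, and for each $v_k$ let $\mathrm{set}(v_k) \subseteq [n]$ denote the index set of the linear generators of the corresponding colon ideal. Then $p_{s+1} = \max_k |\mathrm{set}(v_k)|$, and a generator of $\mathrm{HS}_{p_s + 1}(I^{s+1})$ is detected by any $v_k$ with $|\mathrm{set}(v_k)| \geq p_s + 1$. The plan is to pick a generator $v$ of $I^s$ realizing $|\mathrm{set}(v)| = p_s$ and a generator $u_{jk}$ of $I$ associated with an edge $\{j,k\} \in E(G)$ carefully chosen so that, under a suitable admissible order on $I^{s+1}$, the product $u_{jk}\, v$ satisfies $\mathrm{set}(u_{jk}\, v) \supsetneq \mathrm{set}(v)$. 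The inequality $p_s < n - 2$ leaves at least one index outside the ``saturated'' part of $v$, and connectedness of $G$ produces an edge $\{j,k\}$ incident to such an index, bringing one new linear generator into the colon.

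The principal obstacle is ensuring that this enlargement of $\mathrm{set}$ is genuine --- the new linear generator introduced by $u_{jk}$ must not already lie in $\mathrm{set}(v)$ --- and that the admissible order on $I^{s+1}$ can be arranged compatibly. I anticipate two potential routes. Either (a) reduce to a spanning tree $T \subseteq G$ and invoke the explicit syzygies underlying the tree case of the preceding theorem, exploiting the inclusion $I_c(T)^s \subseteq I^s$; or (b) apply a Betti-splitting argument to the decomposition $I^{s+1} = I \cdot I^s$, isolating the syzygies of $\mathrm{HS}_{p_s + 1}(I^{s+1})$ that are not already inherited from $I \cdot \mathrm{HS}_{p_s}(I^s)$. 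Connectedness of $G$ is essential in both routes, since it is precisely what prevents the disconnected configurations that would stall $p_s$ below $n - 2$.
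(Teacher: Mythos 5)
Your proposal correctly identifies the right kind of construction --- multiply a generator $\beta$ of $I^s$ with $|\mathrm{set}(\beta)| = p_s$ by a well-chosen generator of $I$ so that $\mathrm{set}$ strictly enlarges --- but it is explicitly an unfinished plan: you name the ``principal obstacle'' and list two alternative routes without carrying either one out, so there is a genuine gap exactly where the work has to happen.

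Here is how the paper closes the gap, which is essentially a sharpened version of your Route (a). Fix a spanning tree $T$ of $G$ and label the vertices as in Remark~\ref{labeling}; by \cite[Remark 3.3]{FM1} this yields a single lexicographic order that gives linear quotients on $\mathcal{G}(I^k)$ for every $k$ simultaneously, so your worry about arranging compatible admissible orders dissolves. Next, the index added to $\mathrm{set}$ is chosen from \emph{outside} $\mathrm{set}(\beta)$ --- which automatically resolves your worry that the ``new linear generator must not already lie in $\mathrm{set}(v)$'' --- namely, take $\ell$ to be the minimal element of $[n]\setminus\mathrm{set}(\beta)$. Since $p_s = k \le n-3$ and $\mathrm{set}(\beta)\subseteq [n-1]$, one has $\ell \le n-2$, so $\ell$ has a parent $m$ in $T$ with $\ell < m \le n-1$. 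Set $u_{s+1} = x_\ell x_m$ and $\gamma = \beta\cdot\alpha/u_{s+1}$. Then Lemma~\ref{Basic} (applied with $u_{s+1}$ in the role of ``$u_1$'') gives $\ell \in \mathrm{set}(\gamma)$, because $m \le n-1$ has a neighbour $k' > m > \ell$; and the same lemma shows $\mathrm{set}(\beta)\subseteq\mathrm{set}(\gamma)$, since every witnessing edge or even-connected walk for an index of $\mathrm{set}(\beta)$ persists after appending $u_{s+1}$. Hence $|\mathrm{set}(\gamma)| \ge k+1$, i.e.\ $p_{s+1} > p_s$. Your Route (b) (a Betti-splitting of $I^{s+1} = I\cdot I^s$) does not correspond to anything in the paper's argument, and it is not clear it can deliver strictness: the containment $I\cdot\mathrm{HS}_{p_s}(I^s)\subseteq\mathrm{HS}_{p_s}(I^{s+1})$ only gives $p_{s+1}\ge p_s$ and says nothing about $\mathrm{HS}_{p_s+1}(I^{s+1})$.
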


If \( G \) is bipartite, \( \mathrm{pd}(I_c(G)^s) \) remains constant at \( n-2 \) for all \( s \) beyond the point where it first reaches \( n-2 \).  If \( G \) is non-bipartite, the maximal value of \( \mathrm{pd}(I_c(G)^s) \) over \( s\geq1 \) is \( n-1 \). It remains undetermined whether \( \mathrm{pd}(I_c(G)^s) \) jumps  to \( n-1 \) right  after attaining \( n-2 \).

Given an ideal $I$, we let $\mathrm{dstab}(I)$ denote the smallest integer $k$ such that $\mathrm{pd}(I^s)$ remains  constant for all integers $s\geq k$.

\begin{Theorem}\label{05} {\em (Corollary~\ref{dstab} and Theorem~\ref{uni2})} Let $G$ be a connected  graph with $n$ vertices. Then
\[
\mathrm{dstab} (I_c(G))\leq n-2.
\]
\end{Theorem}

The main theme of this paper is to  describe the  homological shift algebra of  a complementary edge ideal.
\begin{Theorem}{\em (Theorem~\ref{HS1generated})} \label{first}
Let $G$ be a connected graph on vertex set  $[n]$ and  let $I=I_c(G)$ be its complementary edge ideal. Then the 1-st homological shift algebra $\HS_1(\mathcal{R}(I))$ is generated in degree $1$ as a graded $\mathcal{R}(I)$-module.
\end{Theorem}

Let $\alpha$ denote the square-free monomial $x_1x_2\cdots x_n$.
\begin{Theorem}\label{07} {\em (Theorem~\ref{3.1})} Let $G$ be a tree with $n$ vertices, labelled as in Remark \ref{labeling}, and let $I=I_c(G)$ be its complementary edge ideal. Define a map $\phi: [n-1]\rightarrow [n]$ such that $\phi(j)$ is the unique $k\in [n]$, with $k>j$ for which $x_jx_k\in E(G)$. Then, for any $1\leq i\leq s$, we have
\[
\mathrm{HS}_i(I^s)=I^{s-i}\cdot \left(\frac{\alpha^i}{\prod_{k\in F}x_{\phi(k)}}\middle|\; F\subseteq [n-2] \text{\ and\ } |F|=i \right).
\]
In particular, the \( i \)-th homological shift algebra \( \mathrm{HS}_i(\mathcal{R}(I)) \) is generated exactly in degree \( i \) for all \( 1 \leq i \leq n-2 \).
\end{Theorem}

Let \(d\) be a positive integer, and \(\boldsymbol{c}=(c_1,\dots,c_n)\in\mathbb{Z}_{\geq0}^n\) be a given non-negative integer vector.
The \emph{Veronese-type ideal} of \(S\) associated with \(d\) and \(\boldsymbol{c}\) is a monomial ideal generated by all monomials in \(S\) of total degree \(d\) whose exponents are componentwise bounded by \(\boldsymbol{c}\). In symbolic form:
\[
I_{\boldsymbol{c},d}
=\Bigl(
x_1^{a_1}x_2^{a_2}\cdots x_n^{a_n}
\ \Bigm|\
\sum_{i=1}^n a_i=d,\quad 0\le a_i\le c_i\ \text{for all}\ 1\le i\le n
\Bigr).
\]

Veronese-type ideals have been extensively studied, as exemplified by \cite{FL,HHV,HRV,HMRZ021a}. We identify the following notable connection between this class of ideals and the homological shift ideals of powers of a complementary edge ideal.

\begin{Theorem}{\em (Proposition~\ref{ith homology} and Theorem~\ref{AllVeronese})}\label{08} Let $G$ be a tree, and let $I=I_c(G)$ be its complementary edge ideal. Then $\HS_i(I^i)$, up to  division by  a suitable monomial, is a Veronese-type ideal.  Furthermore, every Veronese-type ideal can be realized in this manner.
\end{Theorem}

Studying the homological shift algebra of the complementary edge ideal of a cycle is much more difficult. Regarding this topic, our results are as follows:

\begin{Theorem}
{\em (Theorem~\ref{express1})}
\label{09}
Let $G$ be a cycle of length $n$ and $I$ be its complementary edge ideal. Then, for all integers $1 \leq i <n$ and $s \geq 1$, the following equality holds:
\begin{equation*}
\mathrm{HS}_i(I^s)=\sum_{\max\left\{i - \left\lceil \frac{n}{2} \right\rceil + 1, 0\right\} \leq k \leq \left\lfloor \frac{i}{2} \right\rfloor}
\left(
  \sum_{F \subseteq [n],\ |F| = i - 2k}
  \frac{\alpha^{i - k}}{{\bf x}_F} \cdot I^{s - i + k}
\right).
\end{equation*}
Here, we use the convention that $I^t=0$ if $t<0$, and ${\bf x}_F$ denotes the monomial $\prod_{i\in F}x_i$.
\end{Theorem}

By Theorem~\ref{09}, if $s<\left\lfloor\frac{i}{2}\right\rfloor$, then $s-i+k<0$ for all $k\le\left\lfloor\frac{i}{2}\right\rfloor$, whence $\mathrm{HS}_i(I^s)=0$. This conclusion is consistent with parts (2) and (3) of Theorem~\ref{0.1}.

\begin{Theorem} {\em (Corollary~\ref{degree})} \label{10} Let $G$ be a cycle of length $n$ and  let $I_c(G)$ be its complementary edge ideal, then  $\mathrm{HS}_i(\mathcal{R}(I_c(G)))$ is generated in degree at most $i-q$ for all $i> 0$. Here, $q:=\max\left\{i - \left\lceil \frac{n}{2} \right\rceil + 1, 0\right\}$.
\end{Theorem}

Theorems~\ref{07} and \ref{10}  show that Conjecture~\ref{conjecture} holds for the complementary edge ideal of a tree or a cycle, while Theorem~0.7 also shows that the conclusion of this conjecture cannot be improved; that is, $\mathrm{HS}_i(\mathcal{R}(I))$ cannot be generated in a degree smaller than $i$ in general.

The article is organized as follows: Section \ref{sec:prelim} provides the definitions and basic facts that will be used throughout the paper. In Section~\ref{independent}, we prove Theorem~\ref{02} and discuss several of its consequences. In Section \ref{sec:project}, we compute the projective dimension of powers of the complementary edge ideals of trees and cycles, and prove Proposition~\ref{04} and Theorem~\ref{05}.  In Section \ref{sec:homological}, we are devoted to obtaining explicit expressions for the homological shift ideals of powers of the complementary edge ideals of connected graphs, whereby Theorems \ref{first}–\ref{10} are established.

\section{Preliminaries and Basic Definitions}\label{sec:prelim}
This section collects the definitions and basic facts used throughout the paper. We first retain the notation introduced in the Introduction, then introduce additional notation, core concepts, and key lemmas for our study.

\subsection{Basic Notation}
For a vector $\mathbf{a} = (a_1, \ldots, a_n) \in \mathbb{Z}^n$, we set $\mathbf{x}^{\mathbf{a}} := x_1^{a_1} \cdots x_n^{a_n}$. For a subset $F \subseteq [n]$, we set $\mathbf{x}_F := \prod_{j \in F} x_j$ if $F \neq \emptyset$, and $\mathbf{x}_F := 1$ if $F = \emptyset$. For positive integers $j \leq k$, let $[j, k]$ denote the set $\{j, j+1, \ldots, k\}$; in particular, $[1, k]$ is abbreviated as $[k]$. We use the symbol $\sqcup$ to denote the disjoint union.

\subsection{Core Concepts: Homological Shift Ideals and Algebras}
Given the central role of both the homological shift ideal and algebra in this paper, we restate their definitions, drawn respectively from \cite{HMRZ021a} and \cite{FQ1}.

\begin{Definition}\rm
Let $I\subset S$ be a monomial ideal with minimal multigraded free $S$-resolution
\[
\mathbb{F}\ \ :\ \ 0 \rightarrow F_p \rightarrow F_{p-1} \rightarrow \cdots\rightarrow F_0 \rightarrow I\rightarrow 0,
\]
where $F_i = \bigoplus\limits_{j=1}^{\beta_i(I)}S(-\mathbf{a}_{i,j})$. The vectors $\mathbf{a}_{i,j}\in \mathbb{Z}^n$ are called the $i$-th \emph{multigraded shifts} of the resolution $\mathbb{F}$. The monomial ideal
    $$
    \mathrm{HS}_i(I):=\ (\mathbf{x}^{\mathbf{a}_{i,j}}:\ j=1,\dots,\beta_i(I))
    $$
is called the $i$-th \textit{homological shift ideal} of $I$.
\end{Definition}

Note that $\mathrm{HS}_0(I)=I$ and $\mathrm{HS}_i(I)=(0)$ for $i<0$ or $i>\pd(I)$. If a monomial ideal admits a {\it $d$-linear resolution}, then $\HS_i(I)$ is either $0$ or generated in degree $i+d$ for all $i$.

\begin{Definition}\rm
Let $I\subset S$ be a monomial ideal. We define its $i$-th \emph{homological shift algebra} $ \mathrm{HS}_i(\mathcal{R}(I)) $ as
\[
\mathrm{HS}_i(\mathcal{R}(I)):= \bigoplus_{s \geq 0} \mathrm{HS}_i(I^s).
\]
\end{Definition}

In general, $ \mathrm{HS}_i(\mathcal{R}(I)) $ is merely an abelian group. However, if $ I^s $ admits a linear resolution for all $ s\geq 1 $, then $ \mathrm{HS}_i(\mathcal{R}(I)) $ is a finitely generated graded $ \mathcal{R}(I) $-module.

\subsection{Linear Quotients and Related Properties}
Linear quotients are a key tool for characterizing the structure of homological shift ideals. We recall its definition and a fundamental formula below.

Let $ I $ be a monomial ideal, and let $ \mathcal{G}(I) $ denote its unique minimal set of monomial generators. We say that $ I $ has \emph{linear quotients} if there exists an ordering $ u_1, \ldots, u_m $ of $ \mathcal{G}(I) $ such that the colon ideal $ (u_1, \ldots, u_{i-1}) : (u_i) $ is generated by variables for all $ i = 2, \ldots, m $. By \cite[Lemma 2.1]{JZ}, we may assume without loss of generality that $ \deg(u_1) \leq \cdots \leq \deg(u_m) $.

When $ I $ admits linear quotients, we define $ \set(u_1) = \emptyset $ and
\[
\set(u_i) = \left\{ \ell \ :\ x_\ell \in (u_1, \ldots, u_{i-1}) : (u_i) \right\}, \quad \text{for } i = 2, \ldots, m.
\]

By \cite[Lemma 1.5]{ET}, it follows that
\begin{equation}\label{eq:HerTak1}
\HS_i(I) = \left( \mathbf{x}_F u :\ u \in \mathcal{G}(I),\ F \subseteq \set(u),\ |F| = i \right),
\end{equation}
where $ \mathbf{x}_F = \prod_{j \in F} x_j $ if $ F \neq \emptyset $, and $ \mathbf{x}_\emptyset = 1 $ otherwise. From this formula, we deduce that the projective dimension of $ I $ satisfies
\begin{equation}\label{eq:HerTak2}
\mathrm{pd}(I) = \max\left\{ |\set(u)| :\ u \in \mathcal{G}(I) \right\}.
\end{equation}

\subsection{Complementary edge ideal and FM condition}
Let $G$ be a finite simple graph with vertex set $V(G)=\{1,\ldots , n\}$ and edge set $E(G)$. The \emph{complementary edge ideal} $I_c(G)$ of $G$ is defined as the monomial ideal
\[
I_c(G)=\left(\frac{\mathbf{x}_{[n]}}{x_ix_j}:\ \{i,j\}\in E(G)\right).
\]

A key characterization of \(I_c(G)\) is as follows: \(I_c(G)\) admits a linear resolution if and only if \(G\) is connected, if and only if \(I_c(G)^s\) admits linear quotients for all \(s\geq 1\). In \cite{FM1}, the ordering of minimal generators of  \(I_c(G)^s\) that yields linear quotients was given as follows.

For a simple graph \(G=(V,E)\) and a vertex subset \(U\subseteq V\), the \(\emph{induced subgraph}\) of \(G\) on \(U\), denoted \(G[U]\), is the graph with vertex set \(U\) and edge set \(\{\{u,v\}\in E\mid u,v\in U\}\).

\begin{Definition}\label{FQC}\em
Let \(G\) be a connected graph with \(n\) vertices. A labeling of \(V(G)\) with \([n]\) is said to satisfy the \emph{FM condition} if, for all \(i\in [n-1]\), the induced subgraph \(G[V(G)\setminus [i]] = G[\{i+1,\dots,n\}]\) is connected. This condition is introduced in the work of Ficarra and Moradi \cite[Section 3]{FM1}; we adopt the name \emph{FM condition} here for brevity, following the initials of the authors.
\end{Definition}

As noted in \cite[Remark 3.3]{FM1}, if \(V(G)\) admits a labeling by \([n]\) that satisfies the \(\boldsymbol{\text{FM}}\) condition, then \(I_c(G)^s\) admits linear quotients for all \(s\geq 1\) with respect to the lexicographical order on the minimal generators of \(I_c(G)^s\) induced by \(x_1 > x_2 > \cdots > x_n\). Throughout this paper, any ordering of the minimal monomial generators of an ideal \(I\) with respect to which \(I\) admits linear quotients refers to this specific lexicographical order.

\subsection{Key Lemmas}
The following lemma can be proved in the same way as \cite[Lemma 10.3.5]{HHBook}.
\begin{Lemma}\label{HH}
Let $I, J \subset S$ be monomial ideals that admit $d$-linear resolutions and suppose that $J \subset I$.
Then, for all $\mathbf{a} = (a_1,\ldots,a_n) \in \mathbb{N}^n$ and all $i \ge 0$, we have
\[
\beta_{i,\mathbf{a}}(J) \le \beta_{i,\mathbf{a}}(I).
\]
In particular, for all $i \ge 0$, we have $\HS_i(J) \subseteq \HS_i(I)$.
\end{Lemma}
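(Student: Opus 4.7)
The plan is to deduce the multigraded inequality $\beta_{i,\ab}(J)\le\beta_{i,\ab}(I)$ from the long exact sequence of $\Tor(\KK,-)$ associated to the short exact sequence $0\to J\to I\to I/J\to 0$, and then read off the containment $\HS_i(J)\subseteq \HS_i(I)$ as a formal consequence.

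First I would record two vanishing statements that encode the hypotheses. Since both $I$ and $J$ have $d$-linear resolutions, their multigraded Betti numbers $\beta_{j,\bb}(I)$ and $\beta_{j,\bb}(J)$ vanish whenever $|\bb|\neq j+d$. Second, because $I$ is minimally generated in degree $d$, the images of those generators generate $I/J$, and one checks that $(I/J)_k\subseteq\mm\cdot(I/J)$ for every $k>d$, so $I/J$ is also generated in the single degree $d$. The standard bound on the internal degrees of a minimal free resolution then yields
\[
\Tor_{j}^{S}(\KK,I/J)_{\bb}=0\qquad\text{whenever } |\bb|<j+d.
\]

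The next step is to apply these vanishings to the long exact sequence
\[
\Tor_{i+1}^{S}(\KK,I)_{\ab}\to\Tor_{i+1}^{S}(\KK,I/J)_{\ab}\to\Tor_{i}^{S}(\KK,J)_{\ab}\to\Tor_{i}^{S}(\KK,I)_{\ab}\to\Tor_{i}^{S}(\KK,I/J)_{\ab}\to\Tor_{i-1}^{S}(\KK,J)_{\ab}
\]
in each fixed multidegree $\ab$. When $|\ab|=i+d$, the first term vanishes by the $d$-linearity of $I$, the second by the degree bound above applied with $j=i+1$ (since $i+d<(i+1)+d$), and the last by the $d$-linearity of $J$; exactness of the surviving fragment produces an injection $\Tor_{i}^{S}(\KK,J)_{\ab}\hookrightarrow\Tor_{i}^{S}(\KK,I)_{\ab}$, which gives the desired inequality in that multidegree. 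When $|\ab|\neq i+d$, both sides vanish by $d$-linearity and the inequality holds trivially. The containment $\HS_i(J)\subseteq\HS_i(I)$ is then immediate: a minimal monomial generator of $\HS_i(J)$ is some $\xb^{\ab}$ with $\beta_{i,\ab}(J)>0$, and the inequality just proved forces $\beta_{i,\ab}(I)>0$, whence $\xb^{\ab}\in\HS_i(I)$.

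I do not foresee a serious obstacle. The only mildly delicate point is the verification that $I/J$ is generated in the single degree $d$: this is precisely what forces the critical vanishing $\Tor_{i+1}^{S}(\KK,I/J)_{\ab}=0$ at $|\ab|=i+d$ and collapses the long exact sequence to the short exact fragment from which the injection, and hence the inequality of Betti numbers, is extracted.
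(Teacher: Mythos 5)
Your proof is correct and follows essentially the same long-exact-sequence-of-$\Tor$ argument that the paper invokes by citing Lemma 10.3.5 of Herzog--Hibi's \emph{Monomial Ideals}. One minor remark: the critical vanishing $\Tor_{i+1}^S(\KK, I/J)_{\ab} = 0$ for $|\ab| = i+d$ only requires that $I/J$ be generated in degrees $\ge d$, which is immediate since $I/J$ is a quotient of $I$, so the extra verification that $I/J$ is generated precisely in degree $d$ is a harmless redundancy.
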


Lemma~\ref{HH} yields the following two immediate corollaries, which are crucial to our subsequent arguments.

\begin{Lemma}\label{HSsubset}
Let $G$ be a connected graph and $H$ be a connected subgraph of $G$. Let $u:=\prod_{i\in V(G)\setminus V(H)}x_i$. Then $u^s\cdot\HS_i(I_c(H)^s) \subseteq \HS_i(I_c(G)^s)$ for all $i,s\geq 1$.
In particular, $\mathrm{pd}(I_c(H)^s)\leq \mathrm{pd}(I_c(G)^s)$.
\end{Lemma}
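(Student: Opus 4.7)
The plan is to reduce the claim to Lemma \ref{HH} via the ideal containment $u^s\cdot I_c(H)^s \subseteq I_c(G)^s$. To verify this, observe that a minimal generator of $I_c(H)$ has the form $\prod_{\ell\in V(H)}x_\ell/(x_jx_k)$ for some $\{j,k\}\in E(H)$; multiplying it by $u=\prod_{i\in V(G)\setminus V(H)}x_i$ produces $\prod_{\ell\in V(G)}x_\ell/(x_jx_k)$, which is a minimal generator of $I_c(G)$ since $E(H)\subseteq E(G)$. Hence $u\cdot I_c(H)\subseteq I_c(G)$, and raising to the $s$-th power gives $u^s\cdot I_c(H)^s=(uI_c(H))^s\subseteq I_c(G)^s$.

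Next I would verify the hypotheses of Lemma \ref{HH}: both ideals are generated in the common degree $s(n-2)$ (with $n=|V(G)|$) and admit an $s(n-2)$-linear resolution. For $I_c(G)^s$ this follows from the connectedness of $G$ and the linear-resolution characterization of complementary edge ideals recalled in the introduction; for $u^s\cdot I_c(H)^s$ it follows from the connectedness of $H$, together with the elementary fact that multiplication by a monomial induces an isomorphism of $\ZZ^n$-graded $S$-modules, preserving the shape of the minimal multigraded free resolution while translating every multigraded shift by the exponent vector of the monomial. Invoking Lemma \ref{HH} then yields $\HS_i(u^s\cdot I_c(H)^s)\subseteq \HS_i(I_c(G)^s)$, and the same translation observation identifies $\HS_i(u^s\cdot I_c(H)^s)=u^s\cdot\HS_i(I_c(H)^s)$. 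This gives the main inclusion.

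For the projective dimension statement, the same isomorphism forces $\pd(u^s\cdot I_c(H)^s)=\pd(I_c(H)^s)$, so taking $i=\pd(I_c(H)^s)$ in the main inclusion makes $\HS_i(I_c(G)^s)\neq 0$ and hence $\pd(I_c(H)^s)\leq \pd(I_c(G)^s)$. I do not anticipate any serious obstacle here: the only delicate bookkeeping is to note that the homological shift ideals and Betti numbers of $I_c(H)^s$, when computed in the ambient ring $S$, agree with those computed in $\KK[x_j : j\in V(H)]$ by flat extension, so that multiplication by $u^s$ really does amount to a clean translation of the multigraded Betti diagram.
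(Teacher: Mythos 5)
Your proposal is correct and follows the same strategy as the paper: establish the containment $u^s I_c(H)^s \subseteq I_c(G)^s$, note that both ideals admit linear resolutions in the common degree $s(n-2)$, and invoke Lemma~\ref{HH}. The paper's own proof is exactly this in compressed form; you merely spell out the routine verifications (the generator-level check of the containment, the translation of multigraded shifts by $u^s$, and the flat-extension bookkeeping) that the paper leaves implicit.
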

\begin{proof}
Note that $u^sI_c(H)^s\subseteq I_c(G)^s$. By the characterization of complementary edge ideals, both $u^sI_c(H)^s$ and $I_c(G)^s$ admit linear resolutions. The result thus follows immediately from Lemma~\ref{HH}.
\end{proof}

\begin{Lemma}\label{pdnondecreasing}
Let $G$ be a connected graph. Then $\beta_{i}(I_c(G)^s)\leq \beta_{i}(I_c(G)^{s+1})$ for all $s\geq 1$. In particular, the projective dimension function $\mathrm{pd}(I_c(G)^s)$ of $I_c(G)$ is a non-decreasing function of $s$.
\end{Lemma}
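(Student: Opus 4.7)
The plan is to reduce the claim to a direct application of Lemma \ref{HH}. Since $G$ is connected, both $I_c(G)^{s}$ and $I_c(G)^{s+1}$ admit linear resolutions (indeed, linear quotients) by the characterization of Ficarra--Moradi and Hibi et al.\ recalled in the introduction. The generators of $I_c(G)^{s}$ sit in degree $s(n-2)$ while those of $I_c(G)^{s+1}$ sit in degree $(s+1)(n-2)$, so one cannot apply Lemma \ref{HH} to these two ideals directly. The trick is to multiply by a single generator to line up the degrees.

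Concretely, I would pick any $f \in \mathcal{G}(I_c(G))$ and consider the monomial ideal $J = f \cdot I_c(G)^{s}$. Then $J \subseteq I_c(G)^{s+1}$, $J$ is generated in degree $(s+1)(n-2)$, and multiplication by the monomial $f$ is a shift of multigraded free resolutions, so $J$ inherits a $(s+1)(n-2)$-linear resolution from $I_c(G)^{s}$ and $\beta_{i}(J)=\beta_{i}(I_c(G)^{s})$ for every $i\geq 0$. Now both $J$ and $I_c(G)^{s+1}$ satisfy the hypotheses of Lemma \ref{HH} with $d=(s+1)(n-2)$, so
\[
\beta_{i,\ab}(J)\ \leq\ \beta_{i,\ab}(I_c(G)^{s+1}) \quad \text{for all } \ab\in\NN^n,\ i\geq 0.
\]
Summing over $\ab$ and using $\beta_i(J)=\beta_i(I_c(G)^{s})$ yields the desired inequality $\beta_i(I_c(G)^{s})\leq\beta_i(I_c(G)^{s+1})$.

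The monotonicity of the projective dimension is then immediate: one has $\mathrm{pd}(I_c(G)^{s})=\max\{i:\beta_i(I_c(G)^{s})\neq 0\}$, and if $i=\mathrm{pd}(I_c(G)^{s})$ then $\beta_i(I_c(G)^{s+1})\geq \beta_i(I_c(G)^{s})>0$, so $\mathrm{pd}(I_c(G)^{s+1})\geq i = \mathrm{pd}(I_c(G)^{s})$. There is really no major obstacle here; the only point that requires a moment of care is noticing that raw comparison of $I_c(G)^{s}$ and $I_c(G)^{s+1}$ is illegal under Lemma \ref{HH} (different generating degrees), and that multiplying by one generator of $I_c(G)$ fixes this without disturbing the Betti numbers.
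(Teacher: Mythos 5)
Your proof is correct and follows essentially the same route as the paper: multiply $I_c(G)^s$ by a single minimal generator of $I_c(G)$ to match the generating degree of $I_c(G)^{s+1}$, then invoke Lemma \ref{HH}. The paper's version is just a more terse phrasing of the same argument.
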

\begin{proof}
Fix a minimal monomial generator $u$ of $I_c(G)$. Then $uI_c(G)^s\subseteq I_c(G)^{s+1}$, and both ideals admit \((n-2)(s+1)\)-linear resolutions. It follows that $\beta_{i}(I_c(G)^s)=\beta_{i}(uI_c(G)^s)\leq \beta_{i}(I_c(G)^{s+1})$ by Lemma~\ref{HH}.
\end{proof}
	
\section{A Theorem on the homological shift ideals of monomial ideals with linear resolutions }
    \label{independent}

    In this section, we present a result regarding the homological shift ideal of a monomial ideal with a linear resolution.  Although this result is not directly related to other parts of this paper, it is interesting and applies to general monomial ideals. We therefore include it here.

   Let $\m$ be the maximal graded ideal of $S=\mathbb{K}[x_1,\ldots,x_n]$, and let
$\m^{[i]}$ denote the $i$-th square-free power of $\m$, i.e., $\m^{[i]}=({\bf x}_F:\ F\subseteq [n], |F|=i).$

    \begin{Theorem}\label{lone}
Let \( I \subseteq S = \mathbb{K}[x_1, \ldots, x_n] \) be a monomial ideal with a \( d \)-linear resolution. Then for all \( i \in [n] \), we have
\[
\mathrm{HS}_i(I) + \mathrm{HS}_{i-1}(\mathfrak{m}I) = \mathfrak{m}^{[i]}I.
\]
\end{Theorem}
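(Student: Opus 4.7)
The plan is to derive the identity from the long exact sequence of multigraded $\Tor$ attached to the short exact sequence
\[
0 \to \mathfrak{m}I \to I \to I/\mathfrak{m}I \to 0.
\]
The quotient $I/\mathfrak{m}I$ is annihilated by $\mathfrak{m}$, so as a multigraded $S$-module it decomposes as $\bigoplus_{u\in G(I)} \mathbb{K}(-\ab(u))$, where $\ab(u)$ is the exponent vector of $u$. Computing with the Koszul complex, I would identify
\[
\Tor_i^S(\mathbb{K}, I/\mathfrak{m}I) \;=\; \bigoplus_{u\in G(I)} \Tor_i^S(\mathbb{K},\mathbb{K})(-\ab(u)),
\]
whose multigraded support equals $\{\ab(u)+\eb_F : u\in G(I),\; F\subseteq [n],\; |F|=i\}$, where $\eb_F=\sum_{j\in F}e_j$. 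The associated monomials are precisely $\{u\cdot \xb_F\}$, and these generate $\mathfrak{m}^{[i]}I$.

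Next I would verify that $\mathfrak{m}I$ has a $(d+1)$-linear resolution. Since $I$ has a $d$-linear resolution, $\Tor_j^S(\mathbb{K},I)_{\ab}=0$ unless $|\ab|=d+j$, and by the first step the same holds for $I/\mathfrak{m}I$ with the degree shifted: $\Tor_j^S(\mathbb{K}, I/\mathfrak{m}I)_{\ab}=0$ unless $|\ab|=d+j$. Reading off the long exact sequence of $\Tor$ in each multidegree $\ab$ then forces $\Tor_i^S(\mathbb{K},\mathfrak{m}I)_{\ab}=0$ for $|\ab|\ne (d+1)+i$, establishing $(d+1)$-linearity.

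With this in hand, I would fix a multidegree $\ab$ of total degree $|\ab|=d+i$. At this degree, $\Tor_i^S(\mathbb{K},\mathfrak{m}I)_{\ab}=0$ (by linearity of $\mathfrak{m}I$, since $|\ab|\ne (d+1)+i$) and $\Tor_{i-1}^S(\mathbb{K},I)_{\ab}=0$ (by linearity of $I$, since $|\ab|\ne d+(i-1)$). Hence the long exact sequence collapses to
\[
0\;\to\;\Tor_i^S(\mathbb{K},I)_{\ab}\;\to\;\Tor_i^S(\mathbb{K},I/\mathfrak{m}I)_{\ab}\;\to\;\Tor_{i-1}^S(\mathbb{K},\mathfrak{m}I)_{\ab}\;\to\;0,
\]
so the $\mathbb{K}$-vector space in the middle is the direct sum of the two outer ones. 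Consequently, the multigraded support of $\Tor_i^S(\mathbb{K},I/\mathfrak{m}I)$ is the union of the supports of $\Tor_i^S(\mathbb{K},I)$ and $\Tor_{i-1}^S(\mathbb{K},\mathfrak{m}I)$. Passing to the monomial ideals generated by these supports and combining with the identification from the first step gives $\HS_i(I)+\HS_{i-1}(\mathfrak{m}I)=\mathfrak{m}^{[i]}I$, as required.

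The only delicate point, and thus the main obstacle, is the verification of the $(d+1)$-linearity of $\mathfrak{m}I$: one has to be careful that the long exact sequence forces vanishing of $\Tor_i^S(\mathbb{K},\mathfrak{m}I)_{\ab}$ both in degrees below and above the expected linear strand. Once the bookkeeping in three regions $|\ab|<d+i$, $|\ab|=d+i$, and $|\ab|>d+i+1$ is completed, everything else follows formally from the Koszul description of $\Tor^S(\mathbb{K},I/\mathfrak{m}I)$.
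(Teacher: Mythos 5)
Your overall strategy is the same as the paper's: apply $\mathbb{K}\otimes_S-$ to $0\to\mathfrak{m}I\to I\to I/\mathfrak{m}I\to 0$, compute $\Tor_\bullet(\mathbb{K},I/\mathfrak{m}I)$ via the Koszul complex to obtain $\mathfrak{m}^{[i]}I$, and use the linearity of $I$ and the $(d+1)$-linearity of $\mathfrak{m}I$ to reduce the long exact sequence to a short one in multidegree $|\ab|=d+i$. The one place you diverge is that the paper simply cites the fact that $\mathfrak{m}I$ has a $(d+1)$-linear resolution, whereas you try to derive it from the same long exact sequence.

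That derivation has a gap, precisely in the region you flag as delicate. For $|\ab|<d+i$ and for $|\ab|>d+i+1$, the neighbouring terms $\Tor_{i+1}(\mathbb{K},I/\mathfrak{m}I)_\ab$ and $\Tor_i(\mathbb{K},I)_\ab$ both vanish, so the long exact sequence does kill $\Tor_i(\mathbb{K},\mathfrak{m}I)_\ab$. But at $|\ab|=d+i$ the sequence only gives
\[
0 \to \Tor_i(\mathbb{K},\mathfrak{m}I)_\ab \to \Tor_i(\mathbb{K},I)_\ab,
\]
and the right-hand term is exactly where $\Tor_i(\mathbb{K},I)$ lives for a $d$-linear ideal, so this injection gives no vanishing. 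Yet this is precisely the multidegree your short exact sequence is fixed at, so without filling this hole the argument does not close. The missing ingredient is not the long exact sequence at all: it is the elementary observation that $\mathfrak{m}I$ is minimally generated in degree $d+1$, hence (by minimality of the resolution, or equivalently by looking at $K_i\otimes\mathfrak{m}I$) $\Tor_i(\mathbb{K},\mathfrak{m}I)_\ab=0$ whenever $|\ab|\le d+i$. Once you add that sentence, your "bookkeeping" closes and the rest of the proof goes through exactly as in the paper. (Alternatively, you can simply invoke, as the paper does, the standard fact that $\mathfrak{m}\cdot J$ has a $(d+1)$-linear resolution whenever $J$ has a $d$-linear one; note $\mathfrak{m}I=I_{\ge d+1}$.)
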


\begin{proof}
Let \( T := \bigoplus\limits_{{\bf x}^\alpha \in \mathcal{G}(I)} \mathbb{K}[-\alpha] \), then we obtain  the short exact sequence of multigraded \( S \)-modules:
\[
0 \rightarrow \mathfrak{m}I \rightarrow I \rightarrow T \rightarrow 0.
\]

Fix an integer \( i \in [n] \) and a multidegree \( \beta \in \mathbb{N}^n \) with \( |\beta| = i + d \). Since \( I \) has a \( d \)-linear resolution, its \( k \)-th syzygy module is concentrated in degree \( d + k \) for all \( k \geq 0 \), so \( \mathrm{Tor}_{i-1}(I, \mathbb{K})_{\beta} = 0 \). For \( \mathfrak{m}I \), note that multiplying an ideal with a \( d \)-linear resolution by \( \mathfrak{m} \) yields an ideal with a \( (d+1) \)-linear resolution, so \( \mathrm{Tor}_{i}(\mathfrak{m}I, \mathbb{K})_{\beta} = 0 \).

Applying the \(\mathbb{K}\otimes_S-\) functor to the short exact sequence above, we obtain the following long exact sequence of multigraded \( \mathbb{K} \)-vector spaces (truncated at the relevant degrees):
\begin{align*}
&\rightarrow \mathrm{Tor}_i(\mathfrak{m}I, \mathbb{K})_{\beta}
  \rightarrow \mathrm{Tor}_i(I, \mathbb{K})_{\beta}
  \rightarrow \mathrm{Tor}_i(T, \mathbb{K})_{\beta} \rightarrow \\
&\quad \rightarrow \mathrm{Tor}_{i-1}(\mathfrak{m}I, \mathbb{K})_{\beta}
  \rightarrow \mathrm{Tor}_{i-1}(I, \mathbb{K})_{\beta} \rightarrow.
\end{align*}
Since \( \mathrm{Tor}_{i-1}(I, \mathbb{K})_{\beta} = \mathrm{Tor}_{i}(\mathfrak{m}I, \mathbb{K})_{\beta} = 0 \), this reduces to a short exact sequence:
\[
0 \rightarrow \mathrm{Tor}_i(I, \mathbb{K})_{\beta} \rightarrow \mathrm{Tor}_i(T, \mathbb{K})_{\beta} \rightarrow \mathrm{Tor}_{i-1}(\mathfrak{m}I, \mathbb{K})_{\beta} \rightarrow 0.
\]
Thus, $\mathrm{Tor}_i(T,\mathbb{K})_{\beta}\neq 0$ if and only if either $\mathrm{Tor}_i(I,\mathbb{K})_{\beta}\neq 0$ or $\mathrm{Tor}_{i-1}(\m I,\mathbb{K})_{\beta}\neq 0$. Since all minimal generators of \( \mathrm{HS}_i(I) \), \( \mathrm{HS}_{i-1}(\mathfrak{m}I) \), and \( \mathrm{HS}_i(T) \) have degree \( i + d \) (matching \( |\beta| \)), we conclude that
\[
\mathrm{HS}_i(I) + \mathrm{HS}_{i-1}(\mathfrak{m}I) = \mathrm{HS}_i(T).
\]

On the other hand, the Koszul complex of \( x_1, \ldots, x_n \) provides a minimal free resolution of \( \mathbb{K} \). It follows that for each \( {\bf x}^\alpha \in \mathcal{G}(I) \), \( \mathrm{Tor}_i(\mathbb{K}[-\alpha], \mathbb{K})_{\beta} \neq 0 \) if and only if \( \beta = \alpha + \gamma \) for some \( \gamma \in \mathbb{N}^n \) with \( {\bf x}^\gamma \in \mathcal{G}(\mathfrak{m}^{[i]}) \).  Since \( T = \bigoplus_{{\bf x}^\alpha \in \mathcal{G}(I)} \mathbb{K}[-\alpha] \),  \( \mathrm{HS}_i(T) \) is generated by all monomials \( {\bf x}^{\alpha + \gamma} = {\bf x}^\alpha {\bf x}^\gamma \) where \( {\bf x}^\alpha \in \mathcal{G}(I) \) and \( {\bf x}^\gamma \in \mathcal{G}(\mathfrak{m}^{[i]}) \), which is exactly \( \mathfrak{m}^{[i]}I \).

Combining the two equalities, we get \( \mathrm{HS}_i(I) + \mathrm{HS}_{i-1}(\mathfrak{m}I) = \mathfrak{m}^{[i]}I \).
\end{proof}

    This result indicates  that every minimal generator of $\mathrm{HS}_i(I)$ has the form ${\bf x}_F \cdot u$, where $u\in \mathcal{G}(I)$ and $F$ is a subset of  cardinality $i$ in  $[n]$.  This  is consistent with the case when $I$ has linear quotients. In \cite{FH2023}, the authors expected  that  if $I$ has linear resolutions then $\mathrm{HS}_1(I)$ would have linear quotients (see \cite[the third paragraph, p134]{FH2023}).
This result lends support to this expectation.

   Another consequence of Theorem~\ref{lone} is as follows:
    \begin{Corollary} Let $I\subset S$  be a monomial ideal with a $d$-linear resolution. Then
     $\mathrm{HS}_{n-1}(\m I)=\m^{[n]}I$.
    In particular, if $I\neq 0$, then $\mathrm{pd}(\m I)=n-1$.
    \end{Corollary}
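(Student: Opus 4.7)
The plan is to derive the corollary as a direct specialization of Theorem~\ref{lone} to the index $i = n$. Setting $i = n$ in the identity $\mathrm{HS}_i(I) + \mathrm{HS}_{i-1}(\m I) = \m^{[i]}I$ gives
\[
\mathrm{HS}_n(I) + \mathrm{HS}_{n-1}(\m I) \;=\; \m^{[n]} I.
\]
I would first observe that $\mathrm{HS}_n(I) = 0$: indeed, every nonzero ideal $I \subseteq S = \mathbb{K}[x_1,\dots,x_n]$ satisfies $\mathrm{pd}(I) \le n-1$ (since $S/I$ has projective dimension at most $n$ by Hilbert's syzygy theorem), so there are no $n$-th syzygies. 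This immediately yields the first claim
\[
\mathrm{HS}_{n-1}(\m I) \;=\; \m^{[n]} I.
\]

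For the second claim, I would use that $\m^{[n]}$ is the principal ideal generated by the single squarefree monomial $x_1 x_2 \cdots x_n$, so $\m^{[n]} I = (x_1 \cdots x_n) \cdot I$ is nonzero whenever $I \neq 0$. Consequently $\mathrm{HS}_{n-1}(\m I) \neq 0$, which forces $\mathrm{pd}(\m I) \ge n-1$. Combining this with the universal upper bound $\mathrm{pd}(\m I) \le n-1$ (again by Hilbert's syzygy theorem applied to the ideal $\m I \subseteq S$) gives the desired equality $\mathrm{pd}(\m I) = n-1$.

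There is essentially no obstacle here beyond citing Theorem~\ref{lone} correctly and invoking the standard upper bound on projective dimension; the only subtlety worth flagging is that the hypothesis that $I$ has a $d$-linear resolution is used only through Theorem~\ref{lone}, while the conclusion $\mathrm{pd}(\m I) = n-1$ makes no reference to $d$.
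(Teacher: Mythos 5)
Your proposal is correct and follows essentially the same route as the paper: specialize Theorem~\ref{lone} to $i=n$ and use $\mathrm{HS}_n(I)=0$. The only difference is that you spell out the standard justifications (Hilbert's syzygy theorem for $\mathrm{HS}_n(I)=0$, and the observation that $\m^{[n]}I = (x_1\cdots x_n)I \neq 0$ when $I\neq 0$ to get $\mathrm{pd}(\m I)=n-1$) that the paper leaves implicit.
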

    \begin{proof}  Note that  $\mathrm{HS}_{n}(I)=0$.  Specializing $i$ to $n$ in Theorem~\ref{lone}  yields the desired formula. \end{proof}

\section{Projective dimension functions}
\label{sec:project}
In this section, we investigate the projective dimension of powers of complementary edge ideals for several classes of connected graphs, including trees and cycles.

First, we will review the concept of \emph{even-connectedness}  and a related result. By abuse of notation, an edge of $G$ refers to  both a two-element subset $\{i,j\}$ of $V(G)$ and a monomial $x_ix_j$. This abuse of notation should not cause any confusion.

Recall that a \emph{walk} in a graph \(G\) is a finite sequence of vertices \(v_0, v_1, \dots, v_k\) such that for every \(i = 1, 2, \dots, k\), there is an edge between \(v_{i-1}\) and \(v_i\) in \(G\); such a walk is compactly denoted by $v_0-v_1-\cdots-v_k$. The \emph{length} of this walk is defined to be the number of edges it contains, i.e., $k$.  A walk is called a \emph{closed walk} if its starting vertex coincides with its ending vertex, i.e., $v_0=v_k$. Vertices and edges can be repeated in a walk. By contrast,  a \emph{path}  in a graph \(G\) is a walk where all vertices are distinct.

\begin{Definition} \em (\cite[Definition 6.2]{AB})  Two vertices $j$ and $k$ ($j$ may be equal to $k$) of $G$ are said to be {\it even-connected } with respect to the $s$-fold product $e_1\cdots e_s$ of edges if there exists  a walk $j_1-j_2-\cdots-j_{2\ell+2}$ ($\ell \geq 2$) in $G$ such that
\begin{enumerate}
    \item $j_1=j$ and $j_{2\ell+2}=k$;
   \item For all $ t\in [\ell]$, $j_{2t}j_{2t+1}=e_i$ for some $i$;
   \item For all $i\in [s]$,  $|\{ t\in [\ell]:\ j_{2t}j_{2t+1}=e_i\}|\leq |\{ t\in [s]:\ e_t=e_i\}|$.
\end{enumerate}
\end{Definition}

\begin{Theorem}  {\em(\cite[Theorems 6.1 and  6.7]{AB})}\label{AB}
 Let $G$ be a graph and let  $e_1\cdots e_s$ be an $s$-fold product of edges in $G$, where $s\geq 1$ and $e_i$ may be equal to  $e_j$ for $i\neq j$. Then $(I(G)^{s+1}: e_1\cdots e_s)$ is generated by monomials of degree two. Moreover, each generator $x_jx_k$ ($x_j$ may be equal to $x_k$) of $(I(G)^{s+1}: e_1\cdots e_s)$
is either an edge of $G$ or even-connected with respect to $e_1\cdots e_s$.
\end{Theorem}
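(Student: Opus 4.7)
The plan is to establish the theorem in two complementary directions: the ``if'' direction, that every edge and every even-connected pair lies in $(I(G)^{s+1}:e_1\cdots e_s)$, and the ``only if'' direction, that every minimal generator of this colon is a degree-$2$ monomial of one of these two types.

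For the ``if'' direction, every edge $\{j,k\}\in E(G)$ trivially satisfies $x_jx_k\cdot e_1\cdots e_s\in I(G)^{s+1}$. For an even-connected pair with witnessing walk $j=j_1,j_2,\ldots,j_{2\ell+2}=k$, I would take the odd-position edges $f_t:=x_{j_{2t-1}}x_{j_{2t}}\in E(G)$ for $t=1,\ldots,\ell+1$ and telescope monomial multiplicities along the walk---each interior vertex $j_2,\ldots,j_{2\ell+1}$ appears once in some $f_t$ and once in a ``used'' $e_i$---to obtain
\[
x_jx_k\cdot e_1\cdots e_s \;=\; (f_1\cdots f_{\ell+1})\prod_{i\in U}e_i,
\]
where $U\subseteq[s]$ indexes the unused $e_i$'s, which exist by multiplicity condition~(3). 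This exhibits $x_jx_k\cdot e_1\cdots e_s$ as a product of $s+1$ edges of $G$.

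For the converse, every minimal generator $m$ of the colon can be written as $m=f_1\cdots f_{s+1}/\gcd(f_1\cdots f_{s+1},e_1\cdots e_s)$ for some choice of edges $f_i\in E(G)$, so $\deg m=2(s+1)-\deg\gcd\ge 2$. The key is to prove $\deg\gcd=2s$, i.e.\ $e_1\cdots e_s\mid f_1\cdots f_{s+1}$, which forces $\deg m=2$ (degree $1$ is impossible for dimension reasons). I would argue by an edge-swap: among all valid choices of $(f_i)$ pick one maximizing $\deg\gcd$; suppose for contradiction that some $e_{i_0}=x_ux_v$ is not fully absorbed, so some vertex has ``red excess''. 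A degree count using $\sum_w(\mathrm{mult}_f(w)-\mathrm{mult}_e(w))=2$ then forces a second vertex with ``blue excess'', and the bicolored multi-graph $M$ on $V(G)$ with red edges $e_i$ and blue edges $f_j$ admits an alternating red/blue path between these two vertices. Recoloring along this path produces a new family $(f'_i)\subseteq E(G)$ with strictly larger gcd with $e_1\cdots e_s$, contradicting maximality. Once $\deg m=2$ and $m=x_jx_k$ is not itself an edge, the equation $x_jx_k\cdot e_1\cdots e_s=f_1\cdots f_{s+1}$ forces $M$ to have $j,k$ as its only odd-degree vertices, with equal red and blue degrees at every interior vertex; by Euler's theorem combined with an alternating-trail refinement, $M$ admits an Eulerian trail from $j$ to $k$ that strictly alternates between blue and red edges (starting and ending with blue), which is precisely a witness to even-connectedness.

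The main technical obstacles are the two alternating-walk arguments: the edge-swap in the converse direction, and the alternating Eulerian trail refinement. Both rely on delicate parity counts in a bicolored multi-graph with possibly repeated multi-edges, and in particular condition~(3) must be respected throughout---a given edge $e_i$ can be reused only up to its multiplicity among $e_1,\ldots,e_s$. Keeping this multiplicity bookkeeping consistent when multiple $e_i$'s coincide, and verifying that the recolored family along an alternating path still consists of genuine edges of $G$, is the most delicate technical point.
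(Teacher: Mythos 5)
The paper does not prove this statement; it is quoted verbatim from Banerjee \cite{AB} (Theorems 6.1 and 6.7), so there is no in-paper proof to compare against. Judged on its own merits, your forward direction is correct and complete: the telescoping identity $x_jx_k\cdot\prod_{\text{used}}e_i=f_1\cdots f_{\ell+1}$ along the witnessing walk, combined with condition (3) to set aside $s-\ell$ unused factors $e_i$, exhibits $x_jx_k\,e_1\cdots e_s$ as a product of $s+1$ edges of $G$. This is essentially the known argument for that containment.

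The converse direction, however, is only a roadmap, and the two places you yourself flag as delicate are exactly where the entire content of Banerjee's theorems lives; as written they are gaps. First, the maximization is framed incorrectly: for a \emph{fixed} minimal generator $m$, every representation $m=F/\gcd(F,P)$ with $F=f_1\cdots f_{s+1}$ and $P=e_1\cdots e_s$ has the same $\deg\gcd=2(s+1)-\deg m$, so there is nothing to maximize over representations of $m$. What you need is to maximize $\deg\gcd(F,P)$ over all products $F$ of $s+1$ edges dividing $mP$, show the maximum equals $2s$, and then observe that $F/P$ is a degree-two element of the colon dividing $m$, forcing $\deg m=2$ by minimality. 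Second, both the augmenting step (existence of an alternating red-first trail from a red-excess vertex to a blue-excess vertex whose recoloring keeps $F'\mid mP$ and stays inside $E(G)$) and the final trail extraction are genuinely nontrivial: alternating reachability in edge-colored multigraphs is not transitive and maximal alternating trails can get stuck, so Kotzig-type parity arguments are required, not just Euler's theorem. Moreover the bicolored multigraph need not be connected, so an Eulerian trail need not exist at all; you only need an alternating trail from $j$ to $k$ inside their common component (which the handshake parity argument places them in when $j\neq k$), and the case $j=k$, i.e.\ a generator $x_j^2$, breaks the stated parity count and needs separate treatment. With these repairs the strategy does go through and is the standard route to \cite{AB}, but as submitted the converse half is incomplete.
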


The following result plays a key role in this section and next section. For convenience, we always denote $\prod_{i\in V(G)}x_i$ by $\alpha(G)$ or simply  $\alpha$.

\begin{Lemma} \label{Basic} Let $G$ be a connected graph whose vertices is labeled in a way satisfying the FM condition. Let
$\beta\in \mathcal{G}(I_c(G)^s)$, and write $\beta=\frac{\alpha^s}{u_1\cdots u_s}$, where each  $u_i\in \mathcal{G}(I(G))$. Then
\[
\mathrm{set}(\beta)\subseteq \{i\mid x_i \mbox{ divides } u_1\cdots u_s \}.
\]
 Furthermore, if $u_1=x_ix_j$, then $i\in \mathrm{set}(\beta)$ if and only if  there is some $k\in [n]$ with $k>i$ such that either $\{j,k\}\in E(G)$ or $j$ is even-connected to $k$ with respect to the $(s-1)$-fold product $u_2\cdots u_s$.
\end{Lemma}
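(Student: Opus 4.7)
The plan is to reformulate the membership condition $\ell \in \mathrm{set}(\beta)$ on the ``denominator'' side by working with the edge-product $U := u_1 \cdots u_s$ rather than with $\beta$ itself. First I would recall that, since $I_c(G)^s$ has linear quotients with respect to the lex order, $\ell \in \mathrm{set}(\beta)$ is equivalent to the existence of some $\beta' \in \mathcal{G}(I_c(G)^s)$ with $\beta' > \beta$ in lex and $\beta' \mid x_\ell \beta$. Writing $\beta' = \alpha^s / V$ where $V = v_1 \cdots v_s$ is a product of $s$ edges of $G$, a coordinatewise comparison of multi-degree vectors (noting that $\beta$ and $\beta'$ share the same total degree) translates ``$\beta' > \beta$ in lex'' into ``$V < U$ in lex'', and ``$\beta' \mid x_\ell \beta$'' into ``$U \mid x_\ell V$''.

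For the first (inclusion) claim, since $\deg U = \deg V = 2s$, the condition $U \mid x_\ell V$ forces $x_\ell V = U \cdot x_m$ for some single variable $x_m$, equivalently $V = U \cdot x_m / x_\ell$. For the right-hand side to be a genuine monomial we must have $x_\ell \mid U$, which gives $\mathrm{set}(\beta) \subseteq \{\ell : x_\ell \mid u_1 \cdots u_s\}$.

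For the second claim, I fix $u_1 = x_i x_j$ and take $\ell = i$. Substituting yields
\[
V \;=\; U \cdot x_m / x_i \;=\; (x_j x_m) \cdot u_2 \cdots u_s,
\]
so the requirement that $V$ factors as a product of $s$ edges of $G$ is equivalent to $x_j x_m \in (I(G)^s : u_2 \cdots u_s)$. At this point I would invoke Theorem~\ref{AB} applied to the $(s-1)$-fold product $u_2 \cdots u_s$: the colon ideal is generated in degree two, and a degree-two monomial $x_j x_m$ belongs to it precisely when $\{j, m\} \in E(G)$ or $j$ is even-connected to $m$ with respect to $u_2 \cdots u_s$. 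The remaining lex condition $V < U$, with $V$ obtained from $U$ by subtracting one at slot $i$ and adding one at slot $m$, unpacks to $m > i$. Setting $k = m$ recovers the stated equivalence.

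The main obstacle will be the degenerate case $s = 1$, where $u_2 \cdots u_s$ is an empty product and Theorem~\ref{AB} does not literally apply; there the characterization collapses to $x_j x_m \in I(G)$ iff $\{j, m\} \in E(G)$, with no even-connectedness clause, and this is immediate. A secondary subtlety is that the same monomial $V$ may factor as a product of $s$ edges in several different ways, but only the \emph{existence} of some such factorization matters for $\beta' = \alpha^s/V$ to lie in $\mathcal{G}(I_c(G)^s)$, and the colon-ideal formulation absorbs this automatically.
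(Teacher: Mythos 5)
Your proposal is correct and follows essentially the same strategy as the paper's own proof: pass from the complementary generators $\beta=\alpha^s/U$ to the edge products $U=u_1\cdots u_s$, observe that a lex-earlier generator $\beta'=\alpha^s/V$ dividing $x_\ell\beta$ forces $V$ to be obtained from $U$ by a single variable swap (degree count), and then reduce the existence of such a $V\in\mathcal{G}(I(G)^s)$ to membership of a degree-two monomial in the colon ideal $(I(G)^s:u_2\cdots u_s)$, characterized by Theorem~\ref{AB}. The paper phrases the reformulation via the identity $\bigl(\alpha^s/V\bigr):\bigl(\alpha^s/U\bigr)=(U:V)$ rather than ``$\beta'\mid x_\ell\beta$ iff $U\mid x_\ell V$,'' but these are the same observation; your treatment of the lex-order reversal ($\beta'>\beta$ iff $V<U$), the $s=1$ degenerate case, and the non-uniqueness of edge factorizations are harmless explicit elaborations of steps the paper leaves implicit.
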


\begin{proof} For any monomials $U_1,\ldots,U_k, U$ that divides $\alpha^s$, one has $(\frac{\alpha^s}{U_1}, \cdots, \frac{\alpha^s}{U_k}):\frac{\alpha^s}{U}=(U:U_1)+\cdots+(U:U_k)$. The first assertion follows from this observation.  Next, we prove the second assertion.

Let \( u := u_1\cdots u_s \). If \( i \in \mathrm{set}(\beta) \), then there exists some \( \gamma > \beta \) such that \( \gamma = \alpha^s / v \) and \( \gamma : \beta = u : v = (x_i) \), where \( v \in \mathcal{G}(I(G)^s) \).

Since \( u = x_i x_j u_2 \cdots u_s \), we deduce \( v = x_k x_j u_2 \cdots u_s \) for some \( k \in [n] \). As \( \gamma > \beta \), it follows that \( u > v \), which in turn implies \( i < k \). It follows directly that \( x_k x_j \in (I(G)^s : u_2 \cdots u_s) \). By Theorem~\ref{AB}, either \( \{j, k\} \in E(G) \) or \( j \) is even-connected to \( k \) with respect to the \( (s-1) \)-fold product \( u_2 \cdots u_s \).

Conversely, suppose there exists \( k \in [n] \) such that \( k > i \), and either \( \{j, k\} \in E(G) \) or \( j \) is even-connected to \( k \) with respect to the \( (s-1) \)-fold product \( u_2 \cdots u_s \). Then \( x_k x_j \in (I(G)^s : u_2 \cdots u_s) \). Let \( v := x_k x_j u_2 \cdots u_s \) and \( \gamma:= \alpha^s / v \). It follows that \( \gamma > \beta \) and \( \gamma \in \mathcal{G}(I_c(G)^s) \). Moreover, \( \gamma : \beta = (x_i) \), which implies \( i \in \mathrm{set}(\beta) \).
\end{proof}

By \cite[Propositions 4.3, 4.5]{FM3}, if $G$ is a connected graph, then the maximal value of $\mathrm{pd}(I_c(G)^s)$ for $s\geq 1$   is  $n-1$ if $G$ is non-bipartite, and $n-2$ if $G$ is bipartite.
We first determine the projective dimension function of the complementary edge ideal of a cycle.

 \begin{Theorem} \label{Cycle}
{\em (1)} Let \( G \) be an even cycle with \( 2m \) vertices. Then
\[
\mathrm{pd}(I_c(G)^s) = \left\{
\begin{array}{ll}
2s, & \hbox{if } s \in [m-1]; \\
2m - 2, & \hbox{if } s \geq m - 1.
\end{array}
\right.
\]

{\em (2)} Let \( G \) be an odd cycle with \( 2m + 1 \) vertices. Then
\[
\mathrm{pd}(I_c(G)^s) = \left\{
\begin{array}{ll}
2s, & \hbox{if } s \in [m]; \\
2m, & \hbox{if } s > m.
\end{array}
\right.
\]
\end{Theorem}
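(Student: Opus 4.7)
The plan is to combine the linear-quotients formula \eqref{eq:HerTak2} with Lemma~\ref{Basic} to set up matching upper and lower bounds. For the upper bound, I would apply Lemma~\ref{Basic} directly: for any generator $\beta = \alpha^s/(u_1 \cdots u_s)$, $\set(\beta)$ lies inside the support of $u_1 \cdots u_s$, which has at most $2s$ elements since each $u_i$ is an edge. This gives $\mathrm{pd}(I_c(G)^s) \le 2s$. The remaining upper bound I need, namely $\mathrm{pd}(I_c(G)^s) \le n-2$ when $G$ is bipartite and $\le n-1$ otherwise, I would take from \cite[Propositions 4.3, 4.5]{FM3}.

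For the matching lower bound in the small-$s$ regime, I would label the cycle consecutively so that its edges are $\{i, i+1\}$ for $i \in [n-1]$ together with $\{1, n\}$, and consider
\[
\beta = \frac{\alpha^s}{\prod_{j=1}^s x_{2j-1} x_{2j}},
\]
a product of $s$ pairwise disjoint edges. The claim is $\set(\beta) = [2s]$. Each odd index $2j-1$ is covered by placing $u_j = x_{2j-1} x_{2j}$ in the distinguished slot of Lemma~\ref{Basic} and noting that $\{2j, 2j+1\}$ is a cycle edge with $2j+1 > 2j-1$ (and $2j+1 \le n$ throughout the relevant range). For each even index $2j \ge 4$, the direct-edge criterion fails, so I would exhibit the backward walk
\[
2j-1,\ 2j-2,\ 2j-3,\ \ldots,\ 2,\ 1,\ n,
\]
whose odd-positioned edges are all cycle edges of $G$ and whose even-positioned edges are $u_{j-1}, u_{j-2}, \ldots, u_1$, each used exactly once. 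Since $2s < n$, one has $n > 2j$, and Lemma~\ref{Basic} then yields $2j \in \set(\beta)$. The boundary case $2j = 2$ is handled directly by the cycle edge $\{1, n\}$.

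For the large-$s$ regime, I would invoke the non-decreasing property of $s \mapsto \mathrm{pd}(I_c(G)^s)$ recorded just before Section~\ref{sec:project}: once the value $n-2 = 2(m-1)$ is attained at $s = m-1$ in the even case (respectively $n-1 = 2m$ at $s = m$ in the odd case), the global upper bound forbids further growth, so $\mathrm{pd}$ stays constant. The main technical obstacle is verifying that the backward walk really gives an even-connection from $2j-1$ to $n$ with respect to $u_1 \cdots u_{j-1}$: one has to check the strict alternation between $E(G)$-edges and product-edges together with the multiplicity constraint. The wrap-around edge $\{1, n\}$ is essential here, since without it there would be no way to escape the interval $[2j-1]$ and reach a vertex strictly greater than $2j$ starting from $2j-1$.
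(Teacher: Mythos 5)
Your proposal is correct and follows essentially the same route as the paper: the same upper bound from Lemma~\ref{Basic} (support of $u_1\cdots u_s$ has at most $2s$ elements), the same test monomial $\beta = \alpha^s / \prod_{j=1}^s x_{2j-1}x_{2j}$, the same appeal to \cite[Propositions 4.3, 4.5]{FM3} for the saturation bound, and the same monotonicity argument for the large-$s$ regime. The only cosmetic difference is that you exhibit the even-connection walk from $2j-1$ to $n$ going backward through $2j-2, \ldots, 1$, whereas the paper reads the identical walk in the forward direction from $n$ to $2j-1$; since the even-connectedness relation is symmetric under reversal, these are the same witness.
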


\begin{proof}
(1) Suppose the vertices of \( G \) are labeled consecutively as \( 1, 2, 3, \dots, 2m-1, 2m \) in a cyclic order. It is clear that this labeling satisfies the FM condition (see Definition~\ref{FQC}).  Fix an integer \( s \in [m-1] \). Then, for any \( \beta = \frac{\alpha^s}{u_1u_2\cdots u_s} \in \mathcal{G}(I_c(G)^s) \) where each \( u_i \in \mathcal{G}(I(G)) \), Lemma~\ref{Basic} ensures that $\mathrm{set}(\beta) \subseteq \left\{i \mid x_i \text{ divides } u_1u_2\cdots u_s\right\}$. It follows immediately that
\( |\mathrm{set}(\beta)| \leq 2s \). Consequently, by formula~(\ref{eq:HerTak2}), we have
\[
\mathrm{pd}(I_c(G)^s) = \max\left\{|\mathrm{set}(\beta)| : \beta \in \mathcal{G}(I_c(G)^s)\right\} \leq 2s.
\]

Conversely, let \( \mathbf{e}_i:= x_{2i-1}x_{2i} \) for each \( i \in [s] \), and define \( \gamma:= \frac{\alpha^s}{\mathbf{e}_1\mathbf{e}_2\cdots \mathbf{e}_s} \). Then \( \gamma \in \mathcal{G}(I_c(G)^s) \). An application of Lemma~\ref{Basic} gives \( 2i-1 \in \mathrm{set}(\gamma) \) for all \( i \in [s] \). Next, note that vertex \( 2m \) is even-connected to vertex \( 2i-1 \) with respect to \( \mathbf{e}_1\mathbf{e}_2\cdots \mathbf{e}_{2i-3} \), via the walk:
\[
2m - 1 - 2 - 3 - 4 - \cdots - (2i-3) - (2i-2) - (2i-1).
\]
By virtue of this adjacency walk and the inequality \( 2m > 2i \), a further appeal to Lemma~\ref{Basic} yields \( 2i \in \mathrm{set}(\gamma) \). We thus have \( \mathrm{set}(\gamma) = [2s] \), from which it follows that
$\mathrm{pd}(I_c(G)^s) = \max\left\{|\mathrm{set}(\beta)| : \beta \in \mathcal{G}(I_c(G)^s)\right\} \geq 2s.$

Combining the upper and lower bounds, we establish the equality $\mathrm{pd}(I_c(G)^s) = 2s$ for all integers \( s \in [m-1] \).

Since $\mathrm{pd}(I_c(G)^s)$ is a non-decreasing function of $s$ by Lemma~\ref{HSsubset}, while $\mathrm{pd}(I_c(G)^s)\leq 2m-2$ for all $s\geq 1$ by \cite[Proposition 4.3]{FM3}, it follows that  $\mathrm{pd}(I_c(G)^s)=2m-2$ for all $s\geq m-1$.

The proof of (2) is similar to that of (1), so we omit the proof.
\end{proof}

Next, we turn our attention to trees. To begin with, we assign $V(G)$ a labeling for an arbitrary tree $G$ that satisfies the FM condition  as specified in Definition~\ref{FQC}.

\begin{Remark} \label{labeling}\em
Let \( G \) be a tree with \( n \) vertices. We fix a vertex of degree one as the root and label it \( n \).
To label the remaining \( n-1 \) vertices with \( [n-1]\), we use a distance-based hierarchical rule as follows:

For any vertex \( u \in V(G) \), let \( \text{dist}(u, n) \) denote the length of the unique shortest path from \( u \) to \( n \). Group \( V(G) \setminus \{n\} \) into layers \( L_d = \{ u \mid \text{dist}(u, n) = d \} \) for \( d \geq 1 \). Denote \( m := \max\{ d \mid L_d \neq \emptyset \} \). Assign labels in \( [n-1] \) from the farthest layer to the closest, that is, from \( L_m \) down to \( L_1 \): assign the smallest consecutive labels to \( L_m \), the next set to \( L_{m-1} \), and so on. The labeling order within the same layer is arbitrary.

It is easy to verify that such a labeling satisfies the FM condition. Furthermore, a key property holds: For each \( j \in [n-1] \), there exists a unique \(j< k\leq  n \) such that \( j \) and \( k \) are adjacent. We call \( k \)  the ``parent" of \( j \) relative to root \( n \).

This labeling is generally non-unique since labels within a layer can be permuted, but is unique if and only if \( G \) is a path. In the remainder of this paper, all trees are labeled in this manner.

\end{Remark}

\begin{Lemma} \label{new} Let $G$ be a tree and let $i-y_1-y_2-\cdots-y_p-k$ be a walk in $G$ with $i<k$. If $y_1<i$, then there exists $2\leq q\leq p$ such that $y_q=i$.
\end{Lemma}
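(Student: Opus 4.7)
The plan is to exploit the rooted-tree structure implicit in the labeling of Remark~\ref{labeling}, reducing the statement to a routine subtree-crossing argument. I regard $G$ as rooted at $n$; the labeling condition $\text{dist}(i,n)\ge \text{dist}(j,n)$ for $i<j$ says precisely that labels weakly decrease with distance from $n$. This immediately identifies, for each $j\in[n-1]$, the unique neighbour of $j$ with a larger label as the parent of $j$ in the rooted tree: a neighbour closer to $n$ has strictly larger label than $j$, while every child lies strictly farther from $n$ and thus has strictly smaller label.

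The key auxiliary fact I would establish is a subtree monotonicity statement: writing $T_v$ for the subtree of $G$ rooted at $v$, every vertex of $T_v$ has label at most $v$. This follows by a short induction on depth, using only that each child of any vertex carries a smaller label. Applied to our walk, the hypothesis $y_1<i$ together with adjacency of $i$ and $y_1$ forces $y_1$ to be a child of $i$. Hence $T_{y_1}$ consists solely of vertices with label $\le y_1<i$, so the endpoint $k$ (with $k>i$) necessarily lies outside $T_{y_1}$. Because $G$ is a tree, the unique edge joining $T_{y_1}$ to its complement is the ``bridge'' $\{i,y_1\}$.

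With this setup in place, the conclusion follows from a bridge-crossing argument. The walk enters $T_{y_1}$ at step $1$ (at the vertex $y_1$) and eventually reaches $k\notin T_{y_1}$, so there is a smallest index $q\ge 2$ with $y_q\notin T_{y_1}$. Then $y_{q-1}\in T_{y_1}$, and the edge $\{y_{q-1},y_q\}$ must coincide with the bridge $\{y_1,i\}$; this forces $y_q=i$. Moreover, $q\le p$ is automatic because $y_{p+1}=k\neq i$. I do not foresee a genuine obstacle here; the only mild subtleties are articulating the subtree monotonicity cleanly and ruling out $q=p+1$, both of which are immediate from the setup.
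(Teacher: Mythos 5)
Your argument is correct and follows essentially the same route as the paper: identify $i=\phi(y_1)$ as the unique parent of $y_1$ in the tree rooted at $n$, and observe that a walk from $y_1$ to the higher-labeled vertex $k$ must pass through this parent. The paper phrases this as "any walk from $y_1$ to $k$ contains all vertices of the unique $y_1$--$k$ path, which begins with the edge $\{y_1,\phi(y_1)\}$," while you package the same idea as a cut/bridge-crossing argument using the subtree $T_{y_1}$; your subtree-monotonicity lemma is exactly the (unstated) justification the paper implicitly relies on when it calls $\phi(y_1)$ "the only neighbor of $y_1$ with a larger label."
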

\begin{proof} First, define a function \( \phi: [n-1] \to [n] \) where \( \phi(j) \) denotes the unique parent of \( j \) (as defined in Remark \ref{labeling}): for each \( j \in [n-1] \), \( \phi(j) \) is the unique vertex \( k > j \) that is adjacent to \( j \).
A fundamental observation about \( \phi \) is that for any walk in \( G \) starting at a vertex \( j \in [n-1] \) and ending at a vertex \( \ell > j \), the parent \( \phi(j) \) must lie on this walk. This follows from the tree structure: the unique path from \( j \) to \( \ell \) (and hence any walk from \( j \) to \( \ell \), which reduces to this path) must pass through \( \phi(j) \),  the unique neighbor of \( j \) with a larger label.

Applying this observation to the walk $y_1-y_2-\cdots-y_p-k$ and noting that $\phi(y_1)=i$, the result follows.
\end{proof}

\begin{Lemma} \label{Tree}
Let $G$ be a tree and $\beta\in \mathcal{G}(I_c(G)^s)$. Write $\beta=\frac{\alpha^s}{u_1\cdots u_s}$, where each $u_i\in \mathcal{G}(I(G))$; assume that $u_1=x_ix_j$ with $1\leq i<j\leq n$. Then
\begin{enumerate}
  \item $i\in \mathrm{set}(\beta)$ if and only if $j\neq n$.
  \item If $j\in \mathrm{set}(\beta)$, then there exists some $2\leq t\leq s$ such that $u_t=x_jx_k$ and $k>j$.
  \item $\mathrm{set}(\beta)\subseteq [n-2]$. In particular, $\mathrm{pd}(I_c(G)^s)\leq n-2$ for all $s\geq 1$.
\end{enumerate}
\end{Lemma}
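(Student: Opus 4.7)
The plan is to deduce both parts of the lemma from Lemma~\ref{Basic} by leveraging two structural features of the labelled tree (see Remark~\ref{labeling}). Writing $T_v$ for the subtree of $G$ consisting of $v$ and all its descendants (relative to the root $n$), the labelling forces the largest label of $T_v$ to be $v$; and the bipartiteness of trees imposes parity constraints on walks.

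For part~(1), the ``$\Leftarrow$'' direction will be immediate: assuming $j\neq n$, set $k=\phi(j)$; then $k>j>i$ and $\{j,k\}\in E$, so Lemma~\ref{Basic} applies. For the converse, assuming $j=n$, I must rule out every candidate $k>i$ produced by Lemma~\ref{Basic}. The option $\{n,k\}\in E$ is impossible because $i$ is $n$'s only neighbour. For the even-connectedness alternative, the key observation is $V(G)=T_i\cup\{n\}$, so every walk from $n$ has endpoint in this set: an endpoint in $T_i$ has label $\leq i$, while the endpoint $n$ would produce a closed odd-length walk, forbidden in a bipartite graph. Thus $i\notin\mathrm{set}(\beta)$. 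The ``in particular'' clause $\mathrm{set}(\beta)\subseteq[n-2]$ follows by handling $n$ and $n-1$ separately: $p=n$ is immediate, since $\mathrm{set}(\beta)$ requires a swap $x_p\to x_q$ with $q>p$; for $p=n-1$, I will observe that $n-1$ and $n$ are adjacent, hence lie in opposite parts of the bipartition, so $u\cdot x_n/x_{n-1}$ has unbalanced bipartite degrees and cannot lie in $I(G)^s$.

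For the ``$\Leftarrow$'' direction of part~(2), the argument will be constructive. Given $u_t=x_jx_k$ with $t\geq 2$ and $k>j$, the tree structure forces $k=\phi(j)$. The hypothesis $j\leq n-2$ ensures $\phi(j)\neq n$, so $\phi(\phi(j))$ exists. Replacing $x_j$ in $u_t$ by $x_{\phi(\phi(j))}$ turns $u_t$ into the legitimate edge $\{\phi(j),\phi(\phi(j))\}$, so $u'=u\cdot x_{\phi(\phi(j))}/x_j$ is a product of $s$ edges, hence in $\mathcal{G}(I(G)^s)$, witnessing $j\in\mathrm{set}(\beta)$ with $q=\phi(\phi(j))>j$.

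The hardest part will be the ``$\Rightarrow$'' direction of part~(2). An analogous analysis to the proof of Lemma~\ref{Basic}, this time applied to the larger index $j$ of $u_1$, shows that $j\in\mathrm{set}(\beta)$ produces some $q>j$ with either $\{i,q\}\in E$ or $i$ even-connected to $q$ via $u_2\cdots u_s$; the first alternative is ruled out since $j=\phi(i)$ is $i$'s only neighbour with larger label. So I will have an even-connectedness walk $w_1=i,w_2,\ldots,w_{2\ell+2}=q$ with the usual G/prod alternation. Because $q>j$ lies outside $T_j$, the walk must cross the unique bridge $\{j,\phi(j)\}$. The parity argument will close the proof: since $i$ and $j$ lie in opposite bipartition classes, every visit $w_k=j$ occurs at an even index $k$, so the first-exit step, at which $w_k=j$ and $w_{k+1}=\phi(j)$, is a prod-step, forcing $\{j,\phi(j)\}\in\{u_2,\ldots,u_s\}$ and giving the desired $t\geq 2$ with $u_t=x_jx_{\phi(j)}$.
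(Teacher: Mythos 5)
Your proof is correct and rests on the same foundations as the paper's (Lemma~\ref{Basic}, the subtree/bridge structure encoded in Lemma~\ref{new}, and bipartiteness of trees), but you organize the two hardest steps differently and, in my view, more cleanly. For the clause $\mathrm{set}(\beta)\subseteq[n-2]$, you give a direct argument: any witness for $n-1\in\mathrm{set}(\beta)$ would have to be $v=u\,x_n/x_{n-1}\in\mathcal G(I(G)^s)$, but since $n-1$ and $n$ lie on opposite sides of the bipartition, $v$ has bipartite degrees $(s-1,s+1)$ and so cannot be a product of $s$ edges. The paper instead reads this off from the characterizations in parts~(1) and~(2) (ultimately via equation~(\ref{Tree1})); your argument is self-contained and does not quietly rely on~(2). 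For part~(2) ``$\Rightarrow$'', the paper tracks the last index with $y_p=i$, uses Lemma~\ref{new} to get $y_{p+1}=j$, and then splits into cases on whether $y_{p+2}$ is larger or smaller than $j$, possibly passing to the last index with $y_q=j$. You replace this with a single structural argument: the walk starts at $i\in T_j$ and ends at $q>j$, hence $q\notin T_j$, so it must cross the bridge $\{j,\phi(j)\}$; bipartite parity forces every occurrence of $j$ to sit at an even index, so the first-exit step is automatically a ``prod-step'' and yields $\{j,\phi(j)\}\in\{u_2,\ldots,u_s\}$ without case analysis. For~(2) ``$\Leftarrow$'' you build the explicit witness $u\cdot x_{\phi(\phi(j))}/x_j$ (and you correctly note $\phi(j)\neq n$ because $j\leq n-2$), where the paper simply cites part~(1) applied to $u_t$; both are fine, though you should make sure to note that $u_t=x_jx_{\phi(j)}$ does not contain $x_{\phi(\phi(j))}$, so the resulting monomial still divides $\alpha^s$. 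Overall: correct, and a genuinely tidier route through the parity bookkeeping in~(2).
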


\begin{proof}
(1) If $j\neq n$, then there exists $k\in [n]$ such that $k>j$ and $k$ is adjacent to $j$. Since $j>i$, we have $k>i$; by Lemma~\ref{Basic}, $i\in \mathrm{set}(\beta)$.

If $j=n$, then $i=n-1$. Suppose for contradiction that $n-1\in \mathrm{set}(\beta)$. By Lemma~\ref{Basic}, $n$ is even-connected to itself with respect to the product $u_2\cdots u_s$, so there exists a closed walk $y_1-y_2-\cdots-y_{2\ell+2}$ with $y_1=y_{2\ell+2}=n$. Since $G$ is a tree (hence acyclic), every edge in this closed walk must appear an even number of times, which implies the walk has even length. But the walk has length $2\ell+1$ (odd), a contradiction. Thus $i=n-1\notin \mathrm{set}(\beta)$.

(2) If $j\in \mathrm{set}(\beta)$, then by Lemma~\ref{Basic}, there exists $k'\in [n]$ with $k'>j$ such that either $\{i,k'\}\in E(G)$ or $i$ is even-connected to $k'$ with respect to $u_2\cdots u_s$. By the vertex labeling in Remark \ref{labeling}, $j$ is the unique neighbor of $i$ satisfying $j>i$. Given $k'>j$, we have $\{i,k'\}\notin E(G)$. Hence, $i$ must be even-connected to $k'$ via a walk $y_1-y_2-\cdots-y_{2\ell+2}$ with $y_1=i$ and $y_{2\ell+2}=k'$. Let $p$ be the largest integer such that $y_p=i$.

If $p\neq 1$, then $y_1-y_2-\cdots-y_p$ is a closed walk, so $p$ is odd. It follows that $x_{y_{p+1}}x_{y_{p+2}}\in \{u_2,\ldots,u_s\}$, and Lemma~\ref{new} gives $y_{p+1}=j$.

- If $y_{p+2} > j$, then $x_{y_{p+1}}x_{y_{p+2}}=x_jx_{y_{p+2}}$ is the desired edge.

- If $y_{p+2}<j$, let $q$ denote the maximal integer such that $y_q=j$ and $q\ge p+1$. As all closed walks in $G$ have even length, $q-(p+1)$ is even; moreover, $y_{q+1}>j$ by Lemma~\ref{new}. The fact that $q-(p+1)$ is even implies $x_{y_q}x_{y_{q+1}}\in\{u_2,\dots,u_s\}$. Thus, $x_{y_q}x_{y_{q+1}}=x_jx_{y_{q+1}}$ is the required edge.

(3) We first note $n\notin \mathrm{set}(\beta)$ by Lemma~\ref{Basic}. It remains to show $n-1\notin \mathrm{set}(\beta)$. Assume for contradiction that $n-1\in \mathrm{set}(\beta)$. By part (2), there exists $t\in [s]$ such that $u_t=x_{n-1}x_n$. But by part (1), since $u_t$ has $j=n$, we get $n-1\notin \mathrm{set}(\beta)$, a contradiction. Hence $\mathrm{set}(\beta)\subseteq [n-2]$.
\end{proof}

Let $G$ be a graph on $[n]$, and $e=\{i,j\}$ be an edge of $G$, we set $\min(e)=\min\{i,j\}$.
\begin{Theorem} \label{TreeMain} Let $G$ be a tree with $n$ vertices. Then
\[
\mathrm{pd}(I_c(G)^s)=\left\{
\begin{array}{ll}
 s, & \hbox{ \text{if\ } $s\in  [n-2]$;} \\
  n-2, & \hbox{ \text{if\ } $s\geq n-2$.}
  \end{array}
  \right.
\]
\end{Theorem}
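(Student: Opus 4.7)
The plan is to compute $\mathrm{pd}(I_c(G)^s)$ via formula~\eqref{eq:HerTak2}, namely $\mathrm{pd}(I_c(G)^s)=\max\{|\mathrm{set}(\beta)|:\beta\in\mathcal{G}(I_c(G)^s)\}$, by combining the structural description of $\mathrm{set}(\beta)$ furnished by Lemmas~\ref{Basic} and \ref{Tree} with the monotonicity of $\mathrm{pd}(I_c(G)^s)$ in $s$ established in Section~\ref{sec:prelim}.

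For the upper bound, I would fix $\beta=\alpha^s/(u_1\cdots u_s)\in\mathcal{G}(I_c(G)^s)$ and show $|\mathrm{set}(\beta)|\leq\min(s,n-2)$. By Lemma~\ref{Tree}(1), $\mathrm{set}(\beta)\subseteq[n-2]$. I then claim every $\ell\in\mathrm{set}(\beta)$ equals $\min(u_t)$ for some $t\in[s]$. Indeed, Lemma~\ref{Basic} forces $x_\ell$ to divide $u_1\cdots u_s$, so $\ell$ is an endpoint of some edge in the factorization; after reordering we may assume $\ell\in\{\min(u_1),\max(u_1)\}$. If $\ell=\max(u_1)$, then since $\ell\in[n-2]$, Lemma~\ref{Tree}(2) forces $\ell=\min(u_t)$ for some $t\geq 2$. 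Hence $\mathrm{set}(\beta)\subseteq\{\min(u_t):t\in[s]\}\cap[n-2]$, which has cardinality at most $\min(s,n-2)$, giving $\mathrm{pd}(I_c(G)^s)\leq\min(s,n-2)$.

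For the lower bound when $s\in[n-2]$ I would exhibit an explicit $\beta$ realizing the bound. With the labeling of Remark~\ref{labeling}, let $\phi(t)$ denote the parent of $t$, set $u_t=x_tx_{\phi(t)}$ for $t\in[s]$, and put $\beta=\alpha^s/(u_1\cdots u_s)\in\mathcal{G}(I_c(G)^s)$. Since $n$ is a leaf whose unique neighbor is labeled $n-1$, we have $\phi(t)=n$ only when $t=n-1$; because $s\leq n-2$, every $t\in[s]$ satisfies $\phi(t)\neq n$. Applying Lemma~\ref{Tree}(1) after placing $u_t$ in the first position yields $t\in\mathrm{set}(\beta)$ for all $t\in[s]$, so $[s]\subseteq\mathrm{set}(\beta)$ and equality $\mathrm{pd}(I_c(G)^s)=s$ follows. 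For $s\geq n-2$, the upper bound still gives $\mathrm{pd}(I_c(G)^s)\leq n-2$, while monotonicity of $s\mapsto\mathrm{pd}(I_c(G)^s)$ combined with the already established $\mathrm{pd}(I_c(G)^{n-2})=n-2$ yields the reverse inequality, so the projective dimension stabilizes at $n-2$.

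The main obstacle is the upper bound, specifically the observation that only minimums of edges in a factorization of $\beta$ can contribute to $\mathrm{set}(\beta)$. This hinges on $n$ being a leaf in the chosen labeling, which ensures that no edge with minimum in $[n-2]$ can have $n$ as its larger endpoint; without this, an index in $[n-2]$ appearing solely as a maximum could enter $\mathrm{set}(\beta)$, and parts~(1) and (2) of Lemma~\ref{Tree} would not combine to give the clean bound $|\mathrm{set}(\beta)|\leq s$.
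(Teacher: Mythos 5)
Your proof is correct and follows essentially the same route as the paper: both derive the upper bound from Lemma~\ref{Tree} by showing every element of $\mathrm{set}(\beta)$ must be the minimum of some edge in the factorization (hence $|\mathrm{set}(\beta)|\le\min(s,n-2)$), exhibit the same extremal $\beta$ built from $s$ distinct non-$e_{n-1}$ edges for the lower bound, and finish with monotonicity for $s\ge n-2$. The only cosmetic difference is that you write the extremal generator explicitly as $\alpha^s/\prod_{t\le s}x_tx_{\phi(t)}$ where the paper takes any $s$ of the $n-2$ edges with distinct minimums other than $e_{n-1}$.
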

\begin{proof}
Let \( e_1,e_2,\ldots, e_{n-2}, e_{n-1} \) be all the  edges of \( G \), where \( e_{n-1}=\{n-1,n\}=x_{n-1}x_n \). Then, \( \min(e_1), \min(e_2), \ldots, \min(e_{n-2}) \) are pairwise distinct. In fact, if $\min(e_i)=\min(e_j):=k$, then $e_i=e_j=\{k,\ell\}$, where $\ell$ is the unique parent of $k$, see Remark~\ref{labeling}.

First, consider \( s \in [n-2] \). Choose \( \beta_s = \frac{\alpha^s}{e_1e_2\cdots e_s} \). It is clear that \( \beta_s \in \mathcal{G}(I_c(G)^s) \). By Lemma~\ref{Tree} (1), we directly deduce that
    \[
    \{\min(e_1),\min(e_2),\ldots, \min(e_s)\} \subseteq \operatorname{set}(\beta_s),
    \]
    which implies \( \operatorname{pd}(I_c(G)^s) \geq s \) by  formula~(\ref{eq:HerTak2}).

    Next, take an arbitrary \( \beta \in \mathcal{G}(I_c(G)^s) \). Then \( \beta \) can be written as \( \beta = \frac{\alpha^s}{e_{i_1}e_{i_2}\cdots e_{i_s}} \), where \( i_j \in [n-1] \) for each \( j \in [s] \). If \( i \in \operatorname{set}(\beta) \), then by Lemma~\ref{Tree} (2), there exists some \( t \in [s] \) such that \( i = \min(e_{i_t}) \). Combining this with Lemma~\ref{Tree} (1) and (3),  we immediately deduce that
    \begin{equation} \label{Tree1}
    \operatorname{set}(\beta) = \{\min(e_{i_j}) \mid j \in [s]\} \setminus \{n-1\}.
    \end{equation}
Therefore, \( |\operatorname{set}(\beta)| \leq s \). Since \( \beta \) is arbitrary, substituting into formula~(\ref{eq:HerTak2}) gives
    \[
    \operatorname{pd}(I_c(G)^s) = \max\left\{|\operatorname{set}(\beta)| : \beta \in \mathcal{G}(I_c(G)^s)\right\} \leq s.
    \]
    Combining the previously established lower bound (\( \operatorname{pd}(I_c(G)^s) \geq s \)) and the upper bound above, we conclude \( \operatorname{pd}(I_c(G)^s) = s \) for \( s \in [n-2] \).

Now, consider \( s \geq n-2 \). By Lemma~\ref{pdnondecreasing}, we have \( \operatorname{pd}(I_c(G)^{n-2}) \leq \operatorname{pd}(I_c(G)^s) \). On the other hand, we have already proven that \( \operatorname{pd}(I_c(G)^{n-2}) = n-2 \), and Lemma~\ref{Tree} (3) further implies \( \operatorname{pd}(I_c(G)^s) \leq n-2 \) for all $s\geq 1$. Thus,
    \[
    n-2 = \operatorname{pd}(I_c(G)^{n-2}) \leq \operatorname{pd}(I_c(G)^s) \leq n-2,
    \]
    which forces \( \operatorname{pd}(I_c(G)^s) = n-2 \). This completes the proof.
\end{proof}

This result generalizes \cite[Proposition 4.6]{FM3}, which proves that the same formula holds for paths, a specific type of trees.

We next determine the projective dimension of \( I_c(G) \) for all connected graphs \( G \).

\begin{Proposition}\label{non}
Let \( G \) be a connected graph. Then \( \mathrm{pd}(I_c(G)) \) is either \( 1 \) or \( 2 \). Moreover, \( \mathrm{pd}(I_c(G)) = 1 \) if and only if \( G \) is a tree.
\end{Proposition}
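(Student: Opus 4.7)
The plan is to establish a universal upper bound $\mathrm{pd}(I_c(G)) \leq 2$ and then distinguish the two cases (tree versus non-tree) to determine the exact value.

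For the upper bound, I would apply Lemma~\ref{Basic} with $s = 1$. Since $G$ is connected, $I_c(G)$ has linear quotients with respect to the order described in Remark~\ref{labeling}'s predecessor discussion, and every minimal generator has the form $\beta = \alpha/(x_ix_j)$ for some edge $\{i,j\} \in E(G)$. When $s = 1$, the ``$(s-1)$-fold product'' $u_2 \cdots u_s$ is empty, so the even-connectedness clause of Lemma~\ref{Basic} is vacuous, and the lemma forces $\mathrm{set}(\beta) \subseteq \{i, j\}$. Hence $|\mathrm{set}(\beta)| \leq 2$, and formula~\eqref{eq:HerTak2} yields $\mathrm{pd}(I_c(G)) \leq 2$.

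For the characterization, if $G$ is a tree (with $n \geq 3$ so that $I_c(G)$ is a proper ideal), then Theorem~\ref{TreeMain} at $s = 1$ gives $\mathrm{pd}(I_c(G)) = 1$ directly. Conversely, if $G$ is not a tree, then $G$ contains some cycle $H$ as a connected subgraph. Applying Lemma~\ref{HSsubset} yields $\mathrm{pd}(I_c(H)) \leq \mathrm{pd}(I_c(G))$, and by Theorem~\ref{Cycle} one has $\mathrm{pd}(I_c(H)) = 2$ regardless of whether $H$ is even or odd (the value $s = 1$ lies in $[m]$ for any odd cycle with $m \geq 1$, and in $[m-1]$ for any even cycle with $m \geq 2$, which covers every cycle of length at least three). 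Combining this lower bound with the upper bound, $\mathrm{pd}(I_c(G)) = 2$.

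There is no serious technical obstacle here; the result is essentially a corollary of Theorems~\ref{TreeMain} and~\ref{Cycle} together with the monotonicity of projective dimension under passage to a connected subgraph recorded in Lemma~\ref{HSsubset}. The one subtle point worth emphasizing is that at $s = 1$ the even-connectedness condition of Lemma~\ref{Basic} is automatically vacuous, which is precisely what allows the clean uniform upper bound on $|\mathrm{set}(\beta)|$.
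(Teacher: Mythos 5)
Your argument is correct and follows the same route as the paper: the upper bound $\mathrm{pd}(I_c(G))\leq 2$ comes from the first assertion of Lemma~\ref{Basic} at $s=1$, the tree case is read off from Theorem~\ref{TreeMain}, and the non-tree case uses Lemma~\ref{HSsubset} to pull $\mathrm{pd}(I_c(C))=2$ up from a cycle $C$ in $G$ via Theorem~\ref{Cycle}. Your observations that the even-connectedness clause is vacuous at $s=1$ and that one should assume $n\geq 3$ are nice explicit touches that the paper leaves implicit, but they do not change the line of reasoning.
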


\begin{proof}
It is straightforward to see that \( 1 \leq \mathrm{pd}(I_c(G)) \leq 2 \) by the first assertion of Lemma~\ref{Basic}.

If \( G \) is a tree, then \( \mathrm{pd}(I_c(G)) = 1 \) by Theorem~\ref{TreeMain}. If \( G \) is not a tree, there exists a cycle \( C \) that is a subgraph of \( G \). By Theorem~\ref{Cycle} and Theorem~\ref{HSsubset}, we have \( \mathrm{pd}(I_c(G)) \geq \mathrm{pd}(I_c(C)) = 2 \).
\end{proof}

For any connected graph $G$, it is known from  Lemma~\ref{HSsubset} that $\mathrm{pd}(I_c(G)^s)$ is a non-decreasing function of $s$. This result can be strengthened. To do so, we first recall the definitions of a \emph{spanning subgraph} and a \emph{bipartite graph}: a subgraph \( H \) of \( G \) is spanning if they share the same vertex set, i.e., \( V(H) = V(G) \); a bipartite graph is a graph whose vertex set can be partitioned into two disjoint non-empty subsets \( X \) and \( Y \) such that every edge connects a vertex in \( X \) to a vertex in \( Y \). 

\begin{Proposition}\label{strict}
Let $G$ be a connected graph with $n$ vertices. Then  $\mathrm{pd}(I_c(G)^s)$ strictly increases until it reaches the value $n-2$.

In particular, if we furhter assume that $G$ is bipartite, then $\mathrm{pd}(I_c(G)^s)$ strictly increases until it reaches the maximal value.

\end{Proposition}
\begin{proof}
Let $T$ be a spanning tree  of $G$, and label its vertices  as described   in Remark \ref{labeling}. Note that this  also defines a labeling on $V(G)$ that satisfies the FM condition.  Suppose that $\mathrm{pd}(I_c(G)^s)=k\leq n-3$. We can choose  $\beta=\frac{\alpha^s}{u_1u_2\cdots u_s}\in \mathcal{G}(I_c(G)^s)$ such that $|\set(\beta)|=k$.
Let $\ell$ be the minimal element of $[n]\setminus \set(\beta)$.  Since $k\leq n-3$, it follows that $\ell\leq n-2$. Thus, there exists an integer $m$ with $\ell<m\leq n-1$ such that $\{\ell,m\}\in E(T)$.
Set $u_{s+1}=x_{\ell}x_{m}$ and $\gamma=\beta\cdot \frac{\alpha}{u_{s+1}}$. By Lemma~\ref{Basic},  $\ell\in \set(\gamma)$. Therefore,  $|\set(\gamma)|\geq k+1$. It follows that $\mathrm{pd}(I_c(G)^{s+1})\geq k+1$, which completes the proof of the first assertion.

For the second assertion, we only need to note that if $G$ is a connected bipartite graph, then  $\max\{\mathrm{pd}(I_c(G)^s)|s\geq 1\}=n-2$, by   \cite[Proposition 4.3]{FM3} and the Auslander-Buchsbaum formula.
\end{proof}

If \( G \) is a connected non-bipartite graph with \( n \) vertices, then the maximal value of \( \mathrm{pd}(I_c(G)^s) \) is \( n-1 \). However, we have not yet established whether it continues to increase strictly upon reaching the value \( n-2 \).

By \cite{B79a}, we know that
$\mathrm{depth}(S/I^k)=\mathrm{depth}(S/I^{k+1})$ for all $k\gg 0$. The least integer $k_0>0$ for which
$\mathrm{depth}(S/I^k)=\mathrm{depth}(S/I^{k_0})$
for all $k\ge k_0$ is called the \emph{index of depth stability} of $I$
and is denoted by $\mathrm{dstab}(I)$.

\begin{Corollary}\label{dstab}
Let $G$ be a connected bipartite graph with $n$ vertices. Then
\[
\mathrm{dstab} (I_c(G))\leq n-2.
\]
The equality holds if and only if $G$ is a tree.
\end{Corollary}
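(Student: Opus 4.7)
The plan is to translate the statement about depth into one about projective dimension via the Auslander--Buchsbaum formula, and then combine the strict monotonicity of Proposition~\ref{strict} with the earlier proposition classifying $\mathrm{pd}(I_c(G))$ to read off $\mathrm{dstab}$.

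First, since $S$ is regular and $I_c(G)^k$ has finite projective dimension, Auslander--Buchsbaum gives $\mathrm{depth}(S/I_c(G)^k)=n-\mathrm{pd}(I_c(G)^k)-1$. Hence $\mathrm{dstab}(I_c(G))$ equals the smallest $k_0$ such that $\mathrm{pd}(I_c(G)^k)=\mathrm{pd}(I_c(G)^{k_0})$ for all $k\ge k_0$. By Proposition~\ref{strict}, the integer-valued function $s\mapsto\mathrm{pd}(I_c(G)^s)$ strictly increases (thus by at least $1$ per step) until it hits its maximum value $n-2$, at which point it stabilizes. So $\mathrm{dstab}(I_c(G))$ equals the first index at which the value $n-2$ is attained.

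For the upper bound, I split into two cases. If $G$ is a tree, then Theorem~\ref{TreeMain} gives $\mathrm{pd}(I_c(G)^s)=s$ for $s\in[n-2]$ and $\mathrm{pd}(I_c(G)^s)=n-2$ for $s\ge n-2$, so $\mathrm{dstab}(I_c(G))=n-2$, proving both the bound and one direction of the equality. If $G$ is not a tree, then the earlier proposition (the one stating $\mathrm{pd}(I_c(G))\in\{1,2\}$ with value $1$ precisely for trees) yields $\mathrm{pd}(I_c(G))=2$. Strict increase then forces $\mathrm{pd}(I_c(G)^s)\ge s+1$ for every $s$ in the strictly increasing range; in particular $\mathrm{pd}(I_c(G)^{n-3})\ge n-2$, which must be an equality since $n-2$ is the maximum. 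Thus $\mathrm{dstab}(I_c(G))\le n-3<n-2$, proving the converse direction and confirming the inequality.

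There is essentially no obstacle here: all the nontrivial input (the strict-increase property, the maximum value $n-2$ for bipartite graphs, the explicit projective dimension of $I_c(G)^s$ for trees, and the dichotomy $\mathrm{pd}(I_c(G))\in\{1,2\}$) is already available earlier in the section. The only point requiring care is to verify that equality in the bound $\mathrm{dstab}(I_c(G))\le n-2$ is genuinely excluded in the non-tree case, which is handled by the strict increase starting from the value $2$ rather than~$1$.
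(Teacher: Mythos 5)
Your proof is correct and follows essentially the same route as the paper: Auslander--Buchsbaum to reduce $\mathrm{dstab}$ to the first index at which $\mathrm{pd}(I_c(G)^s)$ hits $n-2$, Theorem~\ref{TreeMain} for the tree case, and the strict-increase Proposition~\ref{strict} together with $\mathrm{pd}(I_c(G))=2$ in the non-tree case. The only cosmetic differences are that you invoke the earlier dichotomy proposition for $\mathrm{pd}(I_c(G))=2$ where the paper re-derives it by picking a cycle subgraph and applying Lemma~\ref{HSsubset}, and you argue directly that $\mathrm{pd}(I_c(G)^s)\ge s+1$ forces stabilization by index $n-3$, whereas the paper frames this as a contradiction from assuming $\mathrm{dstab}\ge n-2$.
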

\begin{proof}
Combining Theorem~\ref{TreeMain} with the Auslander-Buchsbaum formula, we conclude that if \(G\) is a tree, then \(\mathrm{dstab}(I_c(G)) = n - 2\).

Suppose \(G\) contains at least one cycle. It suffices to show that \(\mathrm{dstab}(I_c(G)) < n - 2\). By Proposition~\ref{non}, we have \(\mathrm{pd}(I_c(G)) = 2\). Assume for contradiction that \(\mathrm{dstab}(I_c(G)) \geq n - 2\). Then, by Proposition~\ref{strict}, the following chain of inequalities holds:
\[
\mathrm{pd}(I_c(G)^{n-2}) \geq \mathrm{pd}(I_c(G)^{n-3}) + 1 \geq \cdots \geq \mathrm{pd}(I_c(G)) + (n - 3) = n - 1.
\]
This is a contradiction, which completes the proof.
\end{proof}
We now consider connected non-bipartite graphs. First, recall that a \emph{unicyclic graph} is a connected graph containing precisely one cycle. Note that every connected non-bipartite graph admits a unicyclic spanning subgraph that contains a unique odd cycle; we refer to such a subgraph as an \emph{odd unicyclic graph}.

Let \(H\) be an odd unicyclic graph with $n$ vertices, where \(C\) is the unique cycle of \(H\). Let \(k = |V(C)|\) denote the number of vertices on \(C\). We give a labeling of $V(H)$ that satisfies the FM condition.

 Step 1: Define \emph{the distance from a vertex to cycle \(C\)}. For any vertex \(v\in V(H)\), the distance from \(v\) to \(C\), denoted by \(d(v,C)\), is defined recursively as follows:

(1)  If \(v\in V(C)\), then \(d(v,C)=0\) (vertices on the cycle are at distance 0 from \(C\));

(2) If \(v\notin V(C)\), then \(d(v,C)=1+\min\left\{d(u,C)\mid u\in N(v)\right\}\), where \(N(v)\) is the set of neighbors of \(v\) in \(H\).

 Step 2: Label vertices in descending order of their distance to \(C\).
 Assign the \(k\) largest labels in \([n]\), namely \(n,n-1,\dots,n-k+1\), to the vertices of \(C\) following a consecutive cyclic order. Then for \(t=1,2,\dots\), assign the largest remaining unassigned labels arbitrarily to the vertices in the \(t\)-th distance layer, proceeding sequentially to the \((t+1)\)-th distance layer only after the \(t\)-th layer is fully labeled. Repeat this process until all vertices of \(H\) are labeled.

 \begin{Lemma} \label{way} Let $H$  be an odd unicyclic graph, where \(C\) is the unique cycle of \(H\). Label $V(H)$ as the above way. Then
  this labelling satisfies the FM condition: for any \(i\in[n-1]\), the induced subgraph  of $H$  on $\{i,i+1,\dots,n\}$ is connected.
  \end{Lemma}
  \begin{proof} Fix $i\in [n-1]$. By induction, we may assume that the induced subgraph of $H$ on $[i+1,n]$ is connected. To prove that the induced subgraph of $H$ on $[i,n]$ is connected, it suffices to prove that there exists $i<j\leq n$ such that $i$ is adjacent to $j$. We consider the following two cases.

  If \(i\geq n-k+1\), then \(i\in V(C)\). By the consecutive cyclic labelling rule for \(C\) and the fact that \(i\leq n-1\), vertex \(i\) is adjacent to vertex \(i+1\). Thus, we are done.

If \(i<n-k+1\), then \(i\notin V(C)\) and so \(d(i,C)\geq 1\). By the definition of the distance function, there exists a neighbor \(j\) of vertex \(i\) such that \(d(j,C)=d(i,C)-1\). By the labelling rule, it follows that \(j>i\). This completes the proof.
\end{proof}

\begin{Theorem} \label{uni2}
Let $G$ be a connected non-bipartite graph with $n$ vertices. Then
\[
\mathrm{dstab} (I_c(G))\leq n-2.
\]
\end{Theorem}

\begin{proof}
Let $H$ be a spanning subgraph of $G$ that is odd unicyclic, and let $C$ be the unique cycle of $H$. Set $k:=|V(C)|$. By the preceding discussion, there exists a labeling of $V(H)$ by $[n]$ that satisfies the FM condition, where the vertices of $C$ are labeled by $n,n-1,\ldots,n-k+1$ in a consecutive cyclic order. Since $H$ is a spanning subgraph of $G$, this labeling also serves as a labeling of $G$ satisfying the FM condition, so Lemma~\ref{Basic} is applicable here.

Recall that an odd unicyclic graph has exactly as many edges as vertices (i.e., $|E(H)|=|V(H)|=n$), as it is a tree plus one additional edge forming a cycle. Let $e_1,e_2,\ldots,e_n$ denote all edges of $H$. Without loss of generality, we assume $e_n=\{n-1,n\}$ and $e_{n-1}=\{n-k+1,n\}$. Define
\[
\beta:= \frac{\alpha^{n-2}}{e_1e_2\cdots e_{n-3}} \in I_c(G)^{n-2}.
\]
We claim that $\mathrm{set}(\beta) = [n-1]$. To verify this claim, we proceed by cases for $p \in [n-1]$:

\noindent \textbf{Case 1: $p = n-1$} \\
There exists some index $1\leq i\leq n-3$ (without loss of generality, take $i=1$) such that $e_1=\{n-2,n-1\}$. If $k=3$, then $n-2 = n-k+1$, so $n-2$ is adjacent to $n$. For $k>3$, $n-2$ is even-connected to $n$ with respect to the product $e_2\cdots e_{n-3} $ via the walk: $(n-2)-(n-3)-(n-4)-\cdots-1-n$. By Lemma~\ref{Basic}, it follows that $p=n-1 \in \mathrm{set}(\beta)$.

\noindent \textbf{Case 2: $n-k+1\leq p < n-1$} \\
Since $p\in V(C)$ (by the labeling rule for $C$), we may assume $e_1=\{p,p+1\}$. Note that $p+1$ is adjacent to $p+2$, so by Lemma~\ref{Basic}, $p\in \mathrm{set}(\beta)$.

\noindent \textbf{Case 3: $1\leq p < n-k+1$} \\
Here, $p\notin V(C)$, so by the FM condition for the labeling of $H$, there exists a vertex $q>p$ such that $\{p,q\} \in E(H)$. Suppose first that $q=n$. Set $e_1 = \{p,q\}=\{p,n\}$.  Under this edge designation, since $p\in V(C)$,   $e_1 \in E(H)\setminus E(C)$. Hence, $n$ is even-connected to itself with respect to the product $e_2\cdots e_{n-3}$. If $q<n$, then by the FM condition (applied to vertex $q$), there exists a vertex $r>q$ such that $\{q,r\} \in E(H)$. In either case, Lemma~\ref{Basic} implies $p\in \mathrm{set}(\beta)$.

This completes the proof of the claim that $\mathrm{set}(\beta) = [n-1]$.

Since $\mathrm{set}(\beta) = [n-1]$, we have $\mathrm{pd}(I_c(G)^{n-2})=n-1$. Recall that the projective dimension $\mathrm{pd}(I_c(G)^s)$ is a non-decreasing function of $s\geq 1$, and its maximal possible value is $n-1$. Thus, $\mathrm{pd}(I_c(G)^s)=n-1$ for all $s\geq n-2$, which implies $\mathrm{dstab}(I_c(G))\leq n-2$. This completes the proof of the theorem.
\end{proof}

\section{Homological shift algebra}\label{sec:homological}

In this section, we first show that Conjecture~\ref{conjecture} holds for the complementary edge ideal \( I \) of any connected graph when \( i = 1 \). We then establish explicit closed-form expressions for \( \mathrm{HS}_i(I^s) \) corresponding to the complementary edge ideals \( I \) of trees and cycles. These expressions, in turn, provide a direct proof that Conjecture~\ref{conjecture} holds for these classes of ideals.
We use $\alpha$ to denote ${\bf x}_{[n]}$ in this section.

\subsection{The case when $i=1$}
Recall from \cite[Lemma 2.3]{FQ1} that if \( I \) is a monomial ideal (regardless of whether it has linear quotients), then \( \mathrm{HS}_1(I) \) is generated by all \( \mathrm{lcm}(u, v) \), where \( u, v \in \mathcal{G}(I) \) and \( u \neq v \). Here, \( \mathrm{lcm}(u, v) \) denotes the least common multiple of \( u \) and \( v \).

\begin{Theorem}\label{HS1generated}
Let $G$ be a connected graph on the set  $[n]$ and  let $I=I_c(G)$ be its complementary edge ideal. Then $\HS_1(\mathcal{R}(I))$ is generated in degree $1$ as an $\mathcal{R}(I)$-module.
\end{Theorem}

\begin{proof}
It suffices to prove that \( \mathrm{HS}_1(I^{s+1}) = I^s \cdot \mathrm{HS}_1(I) \) for all \( s \geq 0 \).

First, we recall two  definitions for clarity:

- For any vertex \( i \in V(G) \), \( \deg_G(i) \) denotes  the number of edges incident to \( i \).

- For two monomials \( u, v \in S \), \( \gcd(u, v) \) stands for the greatest common divisor of \( u \) and \( v \).

 By \cite[Lemma 2.3]{FQ1} and the fact that \( \mathrm{HS}_1(I)\) is generated in degree $n-1$, it is straightforward to verify that \( \mathrm{HS}_1(I) = \left( \frac{\alpha}{x_i} \mid \deg_G(i) \geq 2 \right) \). We now proceed by induction on \( s \). Note that \( I^0 = S \), so the case when \( s = 0 \) is trivial.

Suppose \( s \geq 1 \). Let \( \beta \in \mathcal{G}(\mathrm{HS}_1(I^{s+1})) \). By \cite[Lemma 2.3]{FQ1}, there exist elements \( u, v \in \mathcal{G}(I(G)^{s+1}) \) such that \( \beta = \mathrm{lcm}\left(\frac{\alpha^{s+1}}{u}, \frac{\alpha^{s+1}}{v}\right)=\frac{\alpha^{s+1}}{\gcd(u, v)} \), where \( \deg(\gcd(u, v)) = 2s + 1 \). It follows that \( u = y \cdot \gcd(u, v) \) for some \( y \in \{x_1, \ldots, x_n\} \).

Express \( u \) as \( u = \prod_{j=1}^{s+1} u_j \), where each \( u_j \in \mathcal{G}(I(G)) \). Without loss of generality, assume \( y \) divides \( u_1 \), so \( u_1 = x_i y \) for some \( i \in [n] \). Thus, \( \gcd(u, v) = x_i \cdot \prod_{j=2}^{s+1} u_j \). We consider two cases:

(1) If \( \deg_G(i) \geq 2 \), then
\[
\beta = \frac{\alpha^s}{u_2 \cdots u_{s+1}} \cdot \frac{\alpha}{x_i} \in I^s \cdot \mathrm{HS}_1(I).
\]

(2) If \( \deg_G(i) = 1 \), express \( v \) as \( v = \prod_{j=1}^{s+1} v_j \), where each \( v_j\in \mathcal{G}(I(G)) \). Since \( x_i \mid \gcd(u, v) \), we may assume \( x_i \) divides \( v_1 \), which implies \( u_1 = v_1 \) (as both are generators of \( I(G) \) containing \( x_i \)). Let \( u_1 = v_1 = w \) for simplicity. Set \( f = \prod_{j=2}^{s+1} u_j \) and \( g = \prod_{j=2}^{s+1} v_j \). Then \( \gcd(u, v) = w \cdot \gcd(f, g) \), and \( \frac{\alpha^s}{\gcd(f, g)} \in \mathcal{G}(\mathrm{HS}_1(I^s)) \). By the induction, it follows that \( \frac{\alpha^s}{\gcd(f, g)} \in I^{s-1} \cdot \mathrm{HS}_1(I) \). Therefore:
\[
\beta = \frac{\alpha}{w} \cdot \frac{\alpha^s}{\gcd(f, g)} \in I^s \cdot \mathrm{HS}_1(I),
\]
as required.
\end{proof}

For $i\geq 2$,  there is no uniform method to determine the generating degree of the $i$-th homological shift algebra of the complementary edge ideal of a connected graph.

\subsection{Trees}

\begin{Theorem}\label{3.1} Let $G$ be a tree with $n$ vertices, labelled as in Remark \ref{labeling}, and let $I=I_c(G)$ denote its complementary edge ideal. Define a map $\phi: [n-1]\rightarrow [n]$ such that $\phi(k)$ is the unique $j\in [n]$ satisfying $j>k$ and $\{k,j\}\in E(G)$. Then, for any $1\leq i\leq s$, we have
\[
\mathrm{HS}_i(I^s)=I^{s-i}\cdot \left(\frac{\alpha^i}{\prod_{k\in F}x_{\phi(k)}}\middle|\; F\subseteq [n-2] \text{\ and\ } |F|=i \right).
\]
\end{Theorem}

\begin{proof} Let  $e_1,e_2,\ldots, e_{n-2}, e_{n-1}$ be all the edges of $G$. Since $\min(e_1), \ldots, \min(e_{n-1})$ are pairwise distinct, we may assume without loss of generality that $\min (e_j)=j$ for every $j\in [n-1]$.
 It follows that $e_j=x_jx_{\phi(j)}$ for all $j\in [n-1]$. By \cite[Lemmas 5.9-5.11]{HHO}, the edge ring $\mathbb{K}[x_jx_{\phi(j)}|1\leq j\leq n-1]$  has a trivial defining ideal. This implies that the quadratic monomials  $e_1,\ldots,e_{n-1}$ are algebraically independent over $\mathbb{K}$. Hence,   every element of $\mathcal{G}(I(G)^s)$ is uniquely written as
\[
e_1^{s_1}e_2^{s_2}\cdots e_{n-1}^{s_{n-1}},
\]
where $\sum\limits_{j=1}^{n-1}s_j=s$ and each $s_j\geq 0$.

Take any \( \beta = \frac{\alpha^{s-i}}{e_1^{a_1} \cdots e_{n-1}^{a_{n-1}}} \in \mathcal{G}(I^{s-i}) \), where \( e_1^{a_1} \cdots e_{n-1}^{a_{n-1}} \in \mathcal{G}(I(G)^{s-i}) \), whence \( \sum_{j=1}^{n-1} a_j = s-i \). For any subset \( F \subseteq [n-2] \) with \( |F| = i \), define
\[
\gamma=\frac{\alpha^s}{e_1^{a_1+b_1}\cdots e_{n-1}^{a_{n-1}+b_{n-1}}},
\]
where $b_i=1$ if $i\in F$, and $b_i=0$ otherwise.   In particular, $b_{n-1}=0$.  Note that $\gamma\in \mathcal{G}(I^{s})$, and $F\subseteq \set(\gamma)$ by  the equation~(\ref{Tree1}). This implies that ${\bf x}_F\gamma\in \mathrm{HS}_i(I^s)$. Direct computation gives
\[
 \beta \cdot \frac{\alpha^i}{\prod_{k \in F} x_{\phi(k)}}=\mathbf{x}_F \gamma\in \mathrm{HS}_i(I^s).
\]
Thus, the inclusion \( \supseteq \) holds.

Conversely, take any \( U \in \mathcal{G}(\mathrm{HS}_i(I^s)) \). By definition of \( \mathrm{HS}_i(I^s) \), there exists a subset \( F \subseteq [n] \) with \( |F| = i \) and \( V = \frac{\alpha^s}{e_1^{s_1} \cdots e_{n-1}^{s_{n-1}}} \in \mathcal{G}(I^s) \) such that \( U = \mathbf{x}_F V \) and \( F \subseteq \mathrm{set}(V) \). By Equation~(\ref{Tree1}), \( F \subseteq [n-2] \) and \( s_k > 0 \) for all \( k \in F \). Define \( c_k = s_k - 1 \) if \( k \in F \) and \( c_k = s_k \) otherwise; then \( \sum_{k=1}^{n-1} c_k = s - i \), so \( W = \frac{\alpha^{s-i}}{e_1^{c_1} \cdots e_{n-1}^{c_{n-1}}} \in \mathcal{G}(I^{s-i}) \). Direct computation verifies
\[
W \cdot \frac{\alpha^i}{\prod_{k \in F} x_{\phi(k)}} = \mathbf{x}_F V = U,
\]
so \( U \in I^{s-i} \cdot \left( \frac{\alpha^i}{\prod_{k \in F} x_{\phi(k)}} \middle|\; F \subseteq [n-2], |F| = i \right) \). Thus, the inclusion \( \subseteq \) holds.

Combining both inclusions completes the proof.
   \end{proof}

\begin{Corollary}
Let \( G \) be a tree and let \( I = I_c(G) \) be its complementary edge ideal. For \( 1 \leq i \leq n-2 \), the \( i \)-th homological shift algebra \( \mathrm{HS}_i(\mathcal{R}(I)) \) is generated exactly in degree \( i \).
\end{Corollary}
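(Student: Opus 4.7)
The plan is to deduce the corollary directly from the explicit formula in Theorem \ref{3.1} together with the projective dimension computation of Theorem \ref{TreeMain}. The argument splits naturally into two halves: exhibiting a set of generators living in degree $i$, and ruling out the existence of any generator of strictly smaller degree.

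For the first half, I would specialize Theorem \ref{3.1} to $s = i$, which (since $I^0 = S$) yields
\[
\mathrm{HS}_i(I^i) = \left(\frac{\alpha^i}{\prod_{k\in F}x_{\phi(k)}} \;:\; F\subseteq [n-2],\ |F|=i \right).
\]
Then, for every $s \geq i$, the same theorem delivers the identity $\mathrm{HS}_i(I^s) = I^{s-i} \cdot \mathrm{HS}_i(I^i)$. Translating this to the Rees-algebra grading, the graded component $\mathrm{HS}_i(\mathcal{R}(I))_s$ sits inside $\mathcal{R}(I)_{s-i} \cdot \mathrm{HS}_i(\mathcal{R}(I))_i$ for every $s \geq i$, which precisely says that the $\mathcal{R}(I)$-module $\mathrm{HS}_i(\mathcal{R}(I))$ is generated by its degree-$i$ part.

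For the second half, Theorem \ref{TreeMain} gives $\mathrm{pd}(I^s) = s$ for $s \in [n-2]$. Since $\mathrm{HS}_j(M) = 0$ whenever $j > \mathrm{pd}(M)$, it follows that $\mathrm{HS}_i(I^s) = 0$ for every $s$ with $s < i \leq n-2$. Consequently no generator can live in a degree strictly less than $i$, and combined with the first half this pins the minimal generating degree at exactly $i$.

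There is no real obstacle in this corollary; it is essentially a bookkeeping consequence of the two substantive results already established. The only mild subtlety is recognizing that the multiplicative identity of Theorem \ref{3.1}, namely $\mathrm{HS}_i(I^s) = I^{s-i} \cdot \mathrm{HS}_i(I^i)$, is literally the statement that $\mathcal{R}(I)_{s-i}$ generates $\mathrm{HS}_i(\mathcal{R}(I))_s$ from $\mathrm{HS}_i(\mathcal{R}(I))_i$, so that the generating-in-degree-$i$ conclusion is immediate rather than requiring a further syzygy computation.
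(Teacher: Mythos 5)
Your proposal is correct and follows essentially the same path as the paper: use the identity $\mathrm{HS}_i(I^s)=I^{s-i}\cdot\mathrm{HS}_i(I^i)$ from Theorem \ref{3.1} to bound the generating degree above by $i$, and use $\mathrm{pd}(I^s)=s$ from Theorem \ref{TreeMain} to conclude $\mathrm{HS}_i(I^s)=0$ for $s<i$, ruling out lower-degree generators.
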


\begin{proof}
By Theorem~\ref{3.1}, we have \( \mathrm{HS}_i(I^s) = I^{s-i} \cdot \mathrm{HS}_i(I^i) \) for all \( s \geq i \). This inclusion implies \( \mathrm{HS}_i(\mathcal{R}(I)) \) is generated in degrees at most \( i \). By Theorem~\ref{TreeMain}, it follows that \( \mathrm{HS}_i(I^s) = 0 \) for all \( s < i \) and  no generators of \( \mathrm{HS}_i(\mathcal{R}(I)) \) can exist in degrees less than \( i \). Combining these two observations, this $\mathcal{R}(I)$-module is generated exactly in degree \( i \).
\end{proof}

The ideal \( \left( \frac{\alpha^i}{\prod_{k \in F} x_{\phi(k)}}\middle|\; F \subseteq [n-2] \text{ and } |F| = i \right) \) is uniquely determined by \( G \) and \( i \) up to variable permutation, as it coincides with \( \mathrm{HS}_i(I^i) \). This implies the ideal is independent of the labeling of \( G \), and we denote it by \( J_i(G) \).

To study the properties of \( J_i(G) \), we recall some foundational theory on polymatroidal ideals, following the notation from \cite{CMS} and \cite{HHBook}.

Given a monomial \( u = x_1^{c_1}x_2^{c_2}\cdots x_n^{c_n} \in S \) where each \( c_i \in \mathbb{Z}_{\ge 0} \), the \( x_i \)-degree of \( u \), denoted \( \deg_{x_i}(u) \), is defined as \( \deg_{x_i}(u) = c_i \). An ideal \( I \subset S \) generated by monomials of a single degree is called \textit{polymatroidal} if it satisfies the \textit{exchange property}, which states that for all \( u, v \in \mathcal{G}(I) \) and every index \( i \) with \( \deg_{x_i}(u) > \deg_{x_i}(v) \), there exists an index \( j \) such that \( \deg_{x_j}(u) < \deg_{x_j}(v) \) and \( x_j(u/x_i) \in I \).

A stronger version, known as the \textit{strong exchange property}, asserts that for all \( u, v \in \mathcal{G}(I) \), every index \( i \) with \( \deg_{x_i}(u) > \deg_{x_i}(v) \), and \(\textit{all}\) indices \( j \) with \( \deg_{x_j}(u) < \deg_{x_j}(v) \), the monomial \( x_j(u/x_i) \) lies in \( I \). It is immediate that the strong exchange property implies the exchange property; consequently, every ideal satisfying the strong exchange property is polymatroidal.

A key class of polymatroidal ideals that satisfy the strong exchange property are \textit{ideals of Veronese type} (also referred to as \textit{Veronese-type ideals}): given a vector \( {\bf a} = (a_1, \ldots, a_n) \in \mathbb{Z}_{\ge 0}^n \) and an integer \( d \) with \( |{\bf a}| = \sum_{i=1}^n a_i \ge d \), the ideal of Veronese type determined by \( {\bf a} \) and \( d \) is defined as
\[
I_{{\bf a},d} = \left( {\bf x}^b \in S : |{\bf b}| = d, \, {\bf b} \le {\bf a} \right),
\]
where \( {\bf b} = (b_1, \ldots, b_n) \in \mathbb{Z}_{\ge 0}^n \), \( |{\bf b}| = \sum_{i=1}^n b_i \) (the total degree of \( {\bf x}^b \)), and \( {\bf b} \le {\bf a} \) means \( b_i \le a_i \) for all \( i \). Ideals of Veronese type have been studied in \cite{FL, HMRZ021a, HHV, HRV}. By the work \cite{HHV}, it is known that ideals satisfying the strong exchange property are essentially ideals of Veronese type in the sense: For a monomial ideal $I\subset S$  generated in a single degree. Then $I$ satisfies the strong exchange property, if and only if, $I=(u)I_{{\bf a},d}$ for some monomial $u\in S$, and some ${\bf a}\in\ZZ_{\ge0}^n$ and $d>0$.

\begin{Proposition}\label{ith homology}
Let \( G \) be a tree with \( n \) vertices. Then \( \mathrm{HS}_i(I_c(G)^i) = J_i(G) \) is a polymatroidal ideal with the strong exchange property for each \( i \in [n-2] \).
\end{Proposition}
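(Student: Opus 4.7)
The plan is to compress the combinatorial information of $G$ into the fiber sizes $d_j := |\phi^{-1}(j) \cap [n-2]|$ and rewrite $\mathcal{G}(J_i(G))$ as the set of integer solutions to a ``box plus total degree'' system, from which the strong exchange property can be read off directly.

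First I would observe that, since $\phi$ is a function, the sets $\phi^{-1}(j) \cap [n-2]$ (for $j \in [n]$) partition $[n-2]$. Consequently, choosing a subset $F \subseteq [n-2]$ with $|F| = i$ amounts to choosing, for each $j$, a subset of $\phi^{-1}(j) \cap [n-2]$ of size $c_j$, subject to $0 \leq c_j \leq d_j$ and $\sum_j c_j = i$. Only the tuple $(c_j)$ affects the corresponding generator $\alpha^i / \prod_{k \in F} x_{\phi(k)}$, whose $x_j$-exponent is $i - c_j$. Setting $b_j = i - c_j$, this yields the clean description
\[
\mathcal{G}(J_i(G)) = \Bigl\{\prod_{j=1}^n x_j^{b_j} : \max\{0, i - d_j\} \leq b_j \leq i \text{ for all } j,\ \textstyle\sum_{j=1}^n b_j = (n-1)i\Bigr\}.
\]

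With this description in hand, the strong exchange property becomes routine. Given two generators $u = \prod_j x_j^{b_j}$ and $v = \prod_j x_j^{b_j'}$ together with indices $\ell, k$ satisfying $b_\ell > b_\ell'$ and $b_k < b_k'$, the monomial $x_k u/x_\ell$ has a new exponent vector in which position $\ell$ decreases by one and position $k$ increases by one. The box constraints remain valid because $b_\ell - 1 \geq b_\ell' \geq \max\{0, i - d_\ell\}$ and $b_k + 1 \leq b_k' \leq i$, while the total degree is unchanged. Hence $x_k u/x_\ell$ is again a generator of $J_i(G)$, proving the strong exchange property and therefore polymatroidality. Equivalently, pulling out the common factor $\prod_j x_j^{\max\{0, i - d_j\}}$ identifies $J_i(G)$ with a Veronese-type ideal $I_{\mathbf{a}, d'}$ where $\mathbf{a} = (\min\{i, d_j\})_{j}$ and $d' = (n-1)i - \sum_j \max\{0, i - d_j\}$; since Veronese-type ideals satisfy the strong exchange property and this property is preserved under multiplication by a monomial, the same conclusion follows. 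This viewpoint also makes the Veronese-type statement for $\mathrm{HS}_i(I^i)$ in the preceding theorem transparent.

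The main (and essentially only) nontrivial step is the box description of $\mathcal{G}(J_i(G))$; it rests solely on the disjointness of the restricted fibers of $\phi$, which in turn is a consequence of $\phi$ being a function on $[n-1]$. Everything after that is a mechanical manipulation of exponent vectors, so no deeper tree-combinatorial input is required.
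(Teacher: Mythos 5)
Your proof is correct and follows the same core strategy as the paper: partition $[n-2]$ by the fibers of $\phi$ so that the generators of $J_i(G)$ are encoded by exponent vectors $\bigl(b_j\bigr)_{j\in[n]}$ satisfying the box constraints $\max\{0,i-d_j\}\le b_j\le i$ together with $\sum_j b_j=(n-1)i$, from which strong exchange is routine. The paper packages this slightly differently: it first shows the auxiliary ideal $K_i(G)=\bigl(\prod_{k\in F}x_{\phi(k)}:F\subseteq[n-2],\ |F|=i\bigr)$ equals the Veronese-type ideal $I_{\mathbf b,i}$ with $\mathbf b$ the vector of fiber sizes, then obtains $J_i(G)$ as a monomial times $K_{n-2-i}(G)$ via the substitution $c_k=i-a_k$, whereas you write the box description of $\mathcal G(J_i(G))$ directly and verify the exchange axiom by hand. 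Your more direct route is a bit tighter and also sidesteps a cosmetic blemish in the paper's factorization step: when a fiber size $b_k$ exceeds $i$, the paper's extracted ``common factor'' $\prod_k x_k^{\,i-b_k}$ formally carries a negative exponent (harmless, since it cancels against the remaining factor, but your $\max\{0,i-d_j\}$ handles it cleanly and is what one would need if one wanted the Veronese-type presentation to be literal). The essential combinatorial input is identical in both proofs, namely the disjointness of the fibers $\phi^{-1}(j)$.
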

\begin{proof}
Let \(K_i(G)\) denote the monomial ideal of \(S\) generated by all monomials of the form \( \prod_{k\in F}x_{\phi(k)} \), where \(F\subseteq [n-2]\) and \(|F|=i\). For any $ k \in [n-1]$, define
\[
 b_k := |\phi^{-1}(k)| = \left| \{ j \in [n-2] : \phi(j) = k \} \right|.
  \]
Then $b_k > 0$  if and only if $k \in \phi([n-2])$. Moreover, $\sum\limits_{i=1}^{n-1} b_i = n-2$. We claim that
\[
K_i(G) = \left(\prod\limits_{k=1}^{n-1} x_k^{a_k} \middle|\; \sum\limits_{k=1}^{n-1} a_k = i\text{\ and\ } 0 \leq a_k \leq b_k\ \text{for all}\  k\in [n-1] \right).
 \]
Fix $i\in [n-2]$. For every $F\subseteq [n-2]$ such that $|F| = i$, set $a_k:= |F \cap \phi^{-1}(k)|$ for any $k\in [n-1]$. Then $0 \leq a_k \leq b_k$ for all $k$, and
\[
\sum\limits_{k=1}^{n-1} a_k = \left| \bigsqcup_{k=1}^{n-1} (F \cap \phi^{-1}(k)) \right| = |F| = i.
\]
Note that $\prod\limits_{k \in F} x_{\phi(k)} =\prod\limits_{k=1}^{n-1} x_k^{a_k}$, so the inclusion $\subseteq$ of the above claim holds.

Conversely, take any monomial $\prod\limits_{k=1}^{n-1} x_k^{a_k}$ such that $\sum\limits_{k=1}^{n-1} a_k= i$ and $0 \leq a_k \leq b_k$ for all $k\in [n-1]$.   Let $F = F_1 \sqcup \cdots \sqcup F_{n-1}$, where  $F_k$ is a subset of $a_k$ elements in $\phi^{-1}(k)$,  and by convention, $F_k=\emptyset$ if $a_k = 0$.
Then $\prod_{k \in F} x_{\phi(k)} =\prod\limits_{k=1}^{n-1} x_k^{a_k}$, so the inclusion
$\supseteq$ holds. This proves the claim. Therefore,  $K_i(G)$ coincides with  the Veronese-type ideal $I_{\mathbf{b},i}$, where ${\bf b}=(b_1,\ldots,b_{n-1}).$

By putting $b_n=0$, we  conclude that:
\begin{align*}
&J_i(G) = \left( \prod\limits_{k=1}^{n} x_k^{i-a_k} \middle|\; \sum\limits_{k=1}^{n-1} a_k = i\text{\ and\ } 0 \leq a_k \leq b_k\ \text{for all}\  k\in [n] \right) \\
&=\left(\prod\limits_{k=1}^{n} x_k^{c_k} \middle|\; \sum\limits_{k=1}^{n-1}c_k= (n-2)i\text{\ and\ } i - b_k \leq c_k \leq i\ \text{for all}\  k\in [n] \right) \\
&=\left(\prod\limits_{k=1}^{n} x_k^{i-b_k}\right)\left(\prod\limits_{k=1}^{n-1} x_k^{c_k-i+b_k} \middle|\; \sum\limits_{k=1}^{n-1}c_k= (n-2)i \text{\ and\ } i - b_k \leq c_k \leq i\ \text{for all}\  k\in [n] \right)\
\end{align*}
\begin{align*}
&=\left(\prod\limits_{k=1}^{n} x_k^{i-b_k}\right)\left(\prod\limits_{k=1}^{n-1} x_k^{a_k} \middle|\; \sum\limits_{k=1}^{n-1}a_k= n-2-i\text{\ and\ } 0 \leq a_k \leq b_k\ \text{for all}\  k\in [n] \right)\\&
= \left(\prod\limits_{k=1}^{n} x_k^{i-b_k}\right)K_{n-2-i}(G).
\end{align*}
Here, the  second to last equality holds because of $\sum_{k=1}^{n-1} b_k = n-2$.
In the last equality, we use the convention that $K_0(G)=S$.
Therefore,  $J_i(G)$ is a polymatroidal ideal with the strong exchange property.
 \end{proof}
Let \(I\) be a polymatroidal ideal. A \textit{cage} for \(I\) is an integral vector \(\mathbf{c}\in\mathbb{N}^n\) such that \(\mathbf{u}(i)\le\mathbf{c}(i)\) for all \(\mathbf{u}\) with \(\mathbf{x}^{\mathbf{u}}\in\mathcal{G}(I)\) and all \(i\in[n]\). Given a cage \(\mathbf{c}\) of \(I\), the ideal generated by \(\mathbf{x}^{\mathbf{c}-\mathbf{u}}\) for all \(\mathbf{u}\in\mathcal{G}(I)\) is itself a polymatroidal ideal; we refer to this object as the \textit{dual polymatroidal ideal of \(I\) with respect to \(\mathbf{c}\)}. For more details, see \cite{CMS, FL}. By virtue of this definition, \(J_i(G)\) is precisely the dual polymatroidal ideal of \(K_i(G)\) with respect to the \(n\)-tuple cage \(\mathbf{c}=(i,\dots,i)\).

In Proposition~\ref{ith homology}, we show that the $i$-th homological shift ideal $\mathrm{HS}_i(I_c(G)^i)$ (denoted $J_i(G)$) is a Veronese-type ideal after dividing by a suitable monomial. In fact, every Veronese-type ideal arises in this manner.

Recall that a \textit{caterpillar tree} is a tree $T$ containing a path $P$ such that every vertex of $T$ is either on $P$ or adjacent to a vertex of $P$. Caterpillar trees were first investigated by Harary and Schwenk \cite{HS1973}. A \textit{leaf edge} is an edge connecting a leaf vertex (a vertex of degree 1) to another vertex in the graph.

\begin{Theorem}\label{AllVeronese}
Given a vector $\mathbf{b} = (b_1, \ldots, b_n) \in \mathbb{Z}_{>0}^n$ and a positive integer $d \leq |\mathbf{b}|$, where $|\mathbf{b}| = \sum_{j=1}^n b_j$. Set $\sigma_k = \sum_{j=1}^k b_j$ for all $k \in [n]$, and let $i = |\mathbf{b}| - d$. Then there exists a caterpillar tree $T$ with $|\mathbf{b}| + 2$ vertices such that
\[
\mathrm{HS}_{i}\bigl(I_c(T)^{i}\bigr) = uI,
\]
where $u$ is a monomial, and $I \subseteq \mathbb{K}[x_j \mid 1 \leq j \leq |\mathbf{b}| + 2]$ is a Veronese-type ideal generated by all monomials of the form $x_{\sigma_1+1}^{a_1}x_{\sigma_2+1}^{a_2}\cdots x_{\sigma_n+1}^{a_n}$ with $0 \leq a_j \leq b_j$ for all $j \in [n]$ and $\sum_{t=1}^n a_t = d$.
\end{Theorem}

We remark that the ideal $I$ is essentially identical to the ideal $I_{\mathbf{b}, d}\subset S$, despite their residing in the distinct  ambient rings.
\begin{proof}
 Recall from the proof of Proposition~\ref{ith homology} that for a tree $G$ with $n$ vertices, we use $K_i(G)$ to denote the dual polymatroidal ideal of $\mathrm{HS}_i(I_c(G)^i)$ with respect to the $n$-tupe $(i,\ldots,i)$.
 We now construct a caterpillar tree $T$ on vertex set $[\sigma_n+2]$ such that $K_d(T) = I$.

 At first, let $P$ be a path of length $n$ with $n+1$ vertices, labeled in order by $\sigma_1 + 1, \sigma_2 + 1, \ldots, \sigma_n + 1, \sigma_n + 2$. We then add leaf edges according to the sequence $(b_1, \ldots, b_n)$: attach $\sigma_1$ leaf edges to the vertex $\sigma_1 + 1$, whose leaf vertices are $1, \ldots, \sigma_1$; for each $2 \leq i \leq n$, attach $\sigma_i - \sigma_{i-1} - 1 = b_i - 1$ leaf edges to the vertex $\sigma_i + 1$, with leaf vertices $\sigma_{i-1} + 2, \ldots, \sigma_{i-1} + b_i = \sigma_i$.

Denote the resulting graph by $T$. By construction, $T$ is a caterpillar tree with vertex set $[\sigma_n + 2]$, and  its vertex labeling satisfies the FM condition. Define  $\phi: [\sigma_n + 1] \to [\sigma_n + 2]$ by assigning to each $k\in [\sigma_n + 1]$ the unique vertex $j\in [\sigma_n + 2]$ such that $j > k$ and $\{j, k\} \in E(T)$. For the role of $\phi$, see Theorem~\ref{3.1}.

\begin{figure}[htbp]
	\begin{center}
		\begin{tikzpicture}[thick, scale=0.9, every node/.style={scale=0.85}]
			
			\draw[solid] (-1,5)--(1.5,5)--(4.0,5);
			\shade [shading=ball, ball color=black] (-1,5) circle (.07) node[below] {\scriptsize$\sigma_1+1$};
			\shade [shading=ball, ball color=black] (1.5,5) circle (.07) node[above] {\scriptsize$\sigma_2+1$};
			\shade [shading=ball, ball color=black] (4.0,5) circle (.07) node[below] {\scriptsize$\sigma_3+1$};
			
			\draw[solid] (-1,5)--(-2.5,6);
			\draw[solid] (-1,5)--(0.5,6);
			\draw[solid] (-1,5)--(-1.6,6);
			\draw[solid] (-1,5)--(-0.4,6);
			\shade [shading=ball, ball color=black] (-2.5,6) circle (.06) node[above] {\scriptsize$1$};
			\shade [shading=ball, ball color=black] (-1.6,6) circle (.06) node[above] {\scriptsize$2$};
			\shade [shading=ball, ball color=black] (-0.4,6) circle (.06) node[above] {\scriptsize$\sigma_1-1$};
			\shade [shading=ball, ball color=black] (0.5,6) circle (.06) node[above] {\scriptsize$\sigma_1$};
			\shade [shading=ball, ball color=black] (-1.2,5.8) circle (.03);
			\shade [shading=ball, ball color=black] (-1.0,5.8) circle (.03);
			\shade [shading=ball, ball color=black] (-0.8,5.8) circle (.03);
			
			\draw[solid] (1.5,5)--(0,4);
			\draw[solid] (1.5,5)--(3,4);
			\draw[solid] (1.5,5)--(0.9,4);
			\draw[solid] (1.5,5)--(2.1,4);
			\shade [shading=ball, ball color=black] (0,4) circle (.06) node[below] {\scriptsize$\sigma_1+2$};
			\shade [shading=ball, ball color=black] (0.9,4) circle (.06) node[below] {\scriptsize$\sigma_1+3$};
			\shade [shading=ball, ball color=black] (2.1,4) circle (.06) node[below] {\scriptsize$\sigma_2-1$};
			\shade [shading=ball, ball color=black] (3,4) circle (.06) node[below] {\scriptsize$\sigma_2$};
			\shade [shading=ball, ball color=black] (1.3,4.2) circle (.03);
			\shade [shading=ball, ball color=black] (1.5,4.2) circle (.03);
			\shade [shading=ball, ball color=black] (1.7,4.2) circle (.03);
			
			\draw[solid] (4.0,5)--(2.5,6);
			\draw[solid] (4.0,5)--(5.5,6);
			\draw[solid] (4.0,5)--(3.4,6);
			\draw[solid] (4.0,5)--(4.6,6);
			\shade [shading=ball, ball color=black] (2.5,6) circle (.06) node[above] {\scriptsize$\sigma_2+2$};
			\shade [shading=ball, ball color=black] (3.4,6) circle (.06) node[above] {\scriptsize$\sigma_2+3$};
			\shade [shading=ball, ball color=black] (4.6,6) circle (.06) node[above] {\scriptsize$\sigma_3-1$};
			\shade [shading=ball, ball color=black] (5.5,6) circle (.06) node[above] {\scriptsize$\sigma_3$};
			\shade [shading=ball, ball color=black] (3.8,5.8) circle (.03);
			\shade [shading=ball, ball color=black] (4.0,5.8) circle (.03);
			\shade [shading=ball, ball color=black] (4.2,5.8) circle (.03);
			
			\shade [shading=ball, ball color=black] (5.0,5) circle (.02);
			\shade [shading=ball, ball color=black] (5.3,5) circle (.02);
			\shade [shading=ball, ball color=black] (5.6,5) circle (.02);
			\shade [shading=ball, ball color=black] (5.9,5) circle (.02);
			\shade [shading=ball, ball color=black] (6.2,5) circle (.02);
			\shade [shading=ball, ball color=black] (6.5,5) circle (.02);
			
			\draw[solid] (7.5,5)--(10.0,5)--(12.5,5);
			\shade [shading=ball, ball color=black] (7.5,5) circle (.07) node[above] {\scriptsize$\sigma_{n-1}+1$};
			\shade [shading=ball, ball color=black] (10.0,5) circle (.07) node[below] {\scriptsize$\sigma_n+1$};
			\shade [shading=ball, ball color=black] (12.5,5) circle (.07) node[below] {\scriptsize$\sigma_n+2$};
			
			\draw[solid] (7.5,5)--(5.5,4);
			\draw[solid] (7.5,5)--(9.5,4);
			\draw[solid] (7.5,5)--(6.8,4);
			\draw[solid] (7.5,5)--(8.2,4);
			\shade [shading=ball, ball color=black] (5.5,4) circle (.06) node[below] {\scriptsize$\sigma_{n-2}+2$};
			\shade [shading=ball, ball color=black] (6.8,4) circle (.06) node[below] {\scriptsize$\sigma_{n-2}+3$};
			\shade [shading=ball, ball color=black] (8.2,4) circle (.06) node[below] {\scriptsize$\sigma_{n-1}-1$};
			\shade [shading=ball, ball color=black] (9.5,4) circle (.06) node[below] {\scriptsize$\sigma_{n-1}$};
			\shade [shading=ball, ball color=black] (7.3,4.2) circle (.03);
			\shade [shading=ball, ball color=black] (7.5,4.2) circle (.03);
			\shade [shading=ball, ball color=black] (7.7,4.2) circle (.03);
			
			\draw[solid] (10.0,5)--(8.0,6);
			\draw[solid] (10.0,5)--(12.0,6);
			\draw[solid] (10.0,5)--(9.3,6);
			\draw[solid] (10.0,5)--(10.7,6);
			\shade [shading=ball, ball color=black] (8.0,6) circle (.06) node[above] {\scriptsize$\sigma_{n-1}+2$};
			\shade [shading=ball, ball color=black] (9.3,6) circle (.06) node[above] {\scriptsize$\sigma_{n-1}+3$};
			\shade [shading=ball, ball color=black] (10.7,6) circle (.06) node[above] {\scriptsize$\sigma_n-1$};
			\shade [shading=ball, ball color=black] (12.0,6) circle (.06) node[above] {\scriptsize$\sigma_n$};
			\shade [shading=ball, ball color=black] (9.8,5.8) circle (.03);
			\shade [shading=ball, ball color=black] (10.0,5.8) circle (.03);
			\shade [shading=ball, ball color=black] (10.2,5.8) circle (.03);
			
		\end{tikzpicture}
		\caption{Caterpillar tree $T$}
		\label{fig3}
	\end{center}
\end{figure}
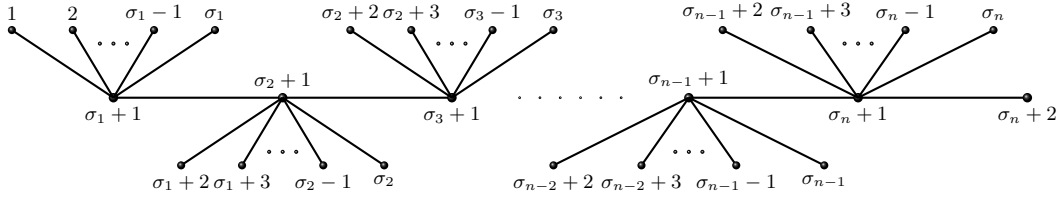It is straightforward to verify that  for any $k \in [\sigma_n+1]$, the preimage $\phi^{-1}(k)$ is nonempty if and only if $k = \sigma_j + 1$ for some $1 \le j \le n$.
Furthermore, for each $j\in  [n]$, we have
$\phi^{-1}(\sigma_j + 1) = \{\sigma_{j-1}+1, \sigma_{j-1}+2, \ldots, \sigma_j\}$, whence $|\phi^{-1}(\sigma_j + 1)|=b_j$.  By the proof of Proposition~\ref{ith homology}, it follows that $K_d(T)=I$. Hence, we conclude $$\mathrm{HS}_{|\mathbf{b}|-d}(I_c(T)^{|\mathbf{b}|-d})=J_{|\mathbf{b}|-d}(T)=uK_d(T)=uI$$ for some monomial $u$, as required.
\end{proof}
\subsection{Cycles}

\medskip

In  this subsection, we investigate the homological shift algebras associated to the complementary edge ideal of a cycle.

Let \( G \) be an \( n \)-cycle with vertices labeled \( 1, 2, \ldots, n \) in a consecutive cyclic order. For \( j = 1, \ldots, n-1 \), let \( {\bf e}_j \) denote the edge \( x_jx_{j+1} \), and set \( {\bf e}_n = x_nx_1 \). For notational convenience, we also use \( {\bf e}_0 \) to denote \( {\bf e}_n \) and \( x_0 \) to denote \( x_n \).

We first consider the case when $G$ is an even cycle. By the definition of a bipartite graph, it is easy to see that  every closed walk in a bipartite graph has even length.

\begin{Lemma}\label{even} Let $G$ be the even cycle of length $n=2m$ and let $I$ be the complementary edge ideal of $G$.  Take $\beta=\frac{\alpha^s}{u_1u_2\cdots u_s}\in I^s$, where $u_j\in I(G)$ for all $j$. Then  $\set(\beta)\subseteq [n-2]$, and the following assertions hold:

\begin{enumerate}
\item If $u_1={\bf e}_{2\ell-1}=x_{2\ell-1}x_{2\ell}$ for some $1\leq \ell\leq m-1$,   then $2\ell$ belongs to $\set(\beta)$ if and only if either $u_j=x_{2\ell}x_{2\ell+1}$ for some $j\geq 2$ or $\{{\bf e}_1, {\bf e}_3, \ldots, {\bf e}_{2\ell-3}\}\subseteq \{u_2,\ldots,u_s\}.$

\item If $u_1={\bf e}_{2\ell}=x_{2\ell}x_{2\ell+1}$ for some $1\leq \ell\leq m-2$,   then $2\ell+1$ belongs to $\set(\beta)$ if and only if either $u_j=x_{2\ell+1}x_{2\ell+2}$ for some $j\geq 2$ or $\{{\bf e}_{0}, {\bf e}_2, {\bf e}_4,\ldots, {\bf e}_{2\ell-2}\}\subseteq \{u_2,\ldots,u_s\}.$

\end{enumerate}
\end{Lemma}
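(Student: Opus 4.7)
The plan is to apply Lemma~\ref{Basic} with the two endpoints of $u_1$ in swapped roles: since the proof of Lemma~\ref{Basic} is entirely symmetric in the two letters of $u_1 = x_ix_j$, the same argument gives that $j \in \set(\beta)$ iff there exists $k > j$ such that either $\{i,k\} \in E(G)$ or $i$ is even-connected to $k$ with respect to $u_2\cdots u_s$. Applied to case (1) with $u_1 = {\bf e}_{2\ell-1}$ (so $i = 2\ell-1$, $j = 2\ell$), this reduces the problem to: does some $k > 2\ell$ satisfy $\{2\ell-1,k\} \in E(G)$ or $2\ell-1$ is even-connected to $k$ via $u_2\cdots u_s$? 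In the cycle, the only neighbors of $2\ell-1$ are $2\ell-2$ and $2\ell$ (together with $2m$ when $\ell=1$), so the edge option applies precisely when $\ell=1$, matching the vacuous content of $\{{\bf e}_1,\ldots,{\bf e}_{-1}\}=\emptyset$.

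For $\ell \ge 2$ I will enumerate the even-connected walks $2\ell-1 = y_1 - y_2 - \cdots - y_{2t+2} = k$ and identify the two minimal ones. The forward walk $2\ell-1 \to 2\ell \to 2\ell+1 \to 2\ell+2$ of length $3$ has sole odd-step edge ${\bf e}_{2\ell}$ and reaches $k = 2\ell+2$, realizing the first condition. The backward-wrap walk $2\ell-1 \to 2\ell-2 \to \cdots \to 1 \to 2m$ of length $2\ell-1$ (valid since $\ell \le m-1$, so $2m > 2\ell$) has odd-step edges exactly ${\bf e}_{2\ell-3},{\bf e}_{2\ell-5},\ldots,{\bf e}_1$ and reaches $k = 2m$, realizing the second condition. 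Together these yield the ``if'' direction.

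For the ``only if'' direction, I will argue that any valid walk's multiset of odd-step edges must contain one of these two minimal sets. The first odd-step edge $\{y_2,y_3\}$ is a graph edge lying in $\{u_2,\ldots,u_s\}$ with $y_2 \in \{2\ell-2, 2\ell\}$, so it is one of ${\bf e}_{2\ell}$ (initiating the forward minimal walk), ${\bf e}_{2\ell-3}$ (initiating the backward-wrap), or one of ${\bf e}_{2\ell-1}, {\bf e}_{2\ell-2}$ (``backtracks'' returning the walker to $2\ell-1$ after two steps). A backtrack consumes an extra product edge without making progress toward $k$; the tail of such a walk is itself an even-connected walk from $2\ell-1$ to $k$, of strictly smaller odd length, and an induction on walk length gives that the tail's odd-step edges already include one of the two minimal sets, to which the backtrack merely adjoins an extra required edge. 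Case (2) is then handled by the parallel analysis applied to $u_1 = {\bf e}_{2\ell}$: walks start at $2\ell$ with neighbors $2\ell-1, 2\ell+1$; the forward walk $2\ell \to 2\ell+1 \to 2\ell+2 \to 2\ell+3$ (length $3$) requires ${\bf e}_{2\ell+1}$, and the backward-wrap walk $2\ell \to 2\ell-1 \to \cdots \to 1 \to 2m \to 2m-1$ of length $2\ell+1$ reaching $k = 2m-1$ (valid since $\ell \le m-2$) requires exactly $\{{\bf e}_0, {\bf e}_2, \ldots, {\bf e}_{2\ell-2}\}$.

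The main obstacle will be rigorously completing the induction in the ``only if'' direction, particularly ruling out walks that avoid ${\bf e}_{2\ell}$ as an odd-step edge while also avoiding some ${\bf e}_{2t-1}$ (for $1 \le t \le \ell-1$) by oscillating between forward and backward motions to ``shift parity'' at that vertex. I foresee a clean dichotomy to handle this: walks that avoid ${\bf e}_{2\ell}$ entirely are confined to the path obtained by deleting ${\bf e}_{2\ell}$ from $G$, where a direct multiset comparison of odd-step edges against those of the backward-wrap minimal walk finishes the argument; walks that do use ${\bf e}_{2\ell}$ can only do so as a graph edge (at odd edge-positions), and a parity count on positions shows this forces strictly more odd-step edges than the forward minimal walk requires, hence subsumes the first condition. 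Making this dichotomy airtight, and bookkeeping the multiplicities carefully, is where the bulk of the technical work will lie.
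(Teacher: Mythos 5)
Your high-level plan matches the paper's: reduce via Lemma~\ref{Basic} (applied to the pair $u_1 = x_ix_j$ with $i = 2\ell$, $j = 2\ell-1$, so that the criterion becomes whether $2\ell-1$ is even-connected to some $k>2\ell$), exhibit the two witnessing walks for the ``if'' direction, and analyze walk structure for ``only if.'' One small remark: no ``symmetric version'' of Lemma~\ref{Basic} needs to be re-proved, since the lemma as stated never assumes $i<j$; one simply takes $i = 2\ell$, $j = 2\ell-1$ directly. Your ``if'' direction is correct (the paper's walk $2m-1-2-\cdots-(2\ell-1)$ is just yours reversed, which is equivalent since even-connectedness is symmetric).

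The ``only if'' direction, however, has a genuine gap in the step you flag as the main obstacle. Your dichotomy says that when the walk does use $\mathbf{e}_{2\ell}$ but only at odd edge-positions (i.e.\ only as even-step, purely graph edges), ``a parity count \ldots forces strictly more odd-step edges than the forward minimal walk requires, hence subsumes the first condition.'' This is confused: the first condition is precisely $\mathbf{e}_{2\ell}\in\{u_2,\ldots,u_s\}$, and if $\mathbf{e}_{2\ell}$ is never an odd-step edge of the walk, nothing forces $\mathbf{e}_{2\ell}$ to lie in that set, so the first condition cannot be the one that is ``subsumed.'' What you actually need in this branch is to show that the odd-step edges still cover $\{\mathbf{e}_1,\mathbf{e}_3,\ldots,\mathbf{e}_{2\ell-3}\}$; this follows from the bipartite parity of the $2m$-cycle (starting from the odd vertex $2\ell-1$, the walk can only cross $\mathbf{e}_{2\ell}$ at an odd edge-position while going from $2\ell+1$ to $2\ell$, so it must first reach $2\ell+1$ the long way round, and the odd-step edges along that stretch are exactly $\mathbf{e}_{2\ell-3},\ldots,\mathbf{e}_1$ up to backtracking, which only adds edges). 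You would still need to make the backtracking bookkeeping precise. The paper sidesteps all of this by a cleaner device: take a \emph{shortest} even-connected walk, observe (using $\mathbf{e}_{2\ell}\notin T$ and minimality) that its first two steps must be $2\ell-1\to 2\ell-2\to 2\ell-3$, deduce $\mathbf{e}_{2\ell-3}\in T$, and then recurse by induction on $\ell$ applied to the subwalk from $2\ell-3$. This avoids any parity or dichotomy analysis and keeps the multiplicity bookkeeping trivial, so I would encourage you to replace your dichotomy with that minimality-plus-induction-on-$\ell$ argument.
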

\begin{proof}
To prove $\operatorname{set}(\beta)\subseteq [n-2]$, it suffices to show that $n-1 = 2m-1\notin \operatorname{set}(\beta)$. To this end, by Lemma~\ref{Basic}, it suffices to verify the following two claims:

(i)  $2m-2$ is not even-connected to $2m$ via any product of edges;

(ii)  $2m$ is not even-connected to itself via any product of edges.

Suppose, for contradiction, that $2m$ is even-connected to itself with respect to some product of edges. This would immediately imply the existence of a closed walk of odd length in $G$. However, since $G$ is bipartite, every closed walk in $G$ has even length, a contradiction.

Next, suppose that $2m-2$ is even-connected to $2m$ with respect to some product of edges. This connectivity would yield a closed walk of odd length starting and ending at $2m$, which again contradicts the fact that all closed walks in a bipartite graph have even length.

We now prove the assertion (1).

(1) ``$\Rightarrow$"
Suppose that \(2\ell\in\mathrm{set}(\beta)\) and \(u_j\neq x_{2\ell}x_{2\ell+1}\) for all \(2\le j\le s\). By Lemma~\ref{Basic}, there exists \(k>2\ell\) such that \(2\ell-1\) is even-connected to \(k\) with respect to \(u_2\cdots u_s\), via the walk
\[
2\ell-1 = y_0-y_1-y_2-\cdots-y_{2t-1}-y_{2t}-y_{2t+1}=k.
\]
We further assume this walk  cannot be shortened. We proceed by induction on \(\ell\) to show that \(\{\mathbf{e}_1,\mathbf{e}_3,\dots,\mathbf{e}_{2\ell-3}\}\subseteq\{u_2,\dots,u_s\}\).

For \(\ell=1\), the claim is trivial, since the set \(\{\mathbf{e}_1,\mathbf{e}_3,\dots,\mathbf{e}_{2\ell-3}\}\) is empty. Now suppose \(\ell>1\).

It is clear that \(y_1\in\{2\ell,2\ell-2\}\). If \(y_1=2\ell\), then \(y_2\) must equal \(2\ell-1\); otherwise, \(u_j=x_{y_1}x_{y_2}=x_{2\ell}x_{2\ell+1}\) for some \(j\ge2\), which would contradict the non-shortenable assumption of the walk. Thus \(y_1=2\ell-2\), whence \(y_2=2\ell-3\). It follows that \(\mathbf{e}_{2\ell-3}=x_{y_1}x_{y_2}\in\{u_2,\dots,u_s\}\). Without loss of generality, we may set \(u_2=\mathbf{e}_{2\ell-3}\).

Consider now the subwalk
\[
2\ell-3 = y_2-y_3-y_4-\cdots-y_{2t+1}=k.
\]
This subwalk certifies that \(2\ell-3\) is even-connected to \(k\) with respect to \(u_3\cdots u_s\). Note that \(2\ell-1\) does not appear in this subwalk: the original walk is non-shortenable, and \(y_0=2\ell-1\) occurs only at the starting vertex. The inductive hypothesis then yields \(\{\mathbf{e}_1,\mathbf{e}_3,\dots,\mathbf{e}_{2\ell-5}\}\subseteq\{u_3,\dots,u_s\}\). Combining this with \(u_2=\mathbf{e}_{2\ell-3}\), we conclude that \(\{\mathbf{e}_1,\mathbf{e}_3,\dots,\mathbf{e}_{2\ell-3}\}\subseteq\{u_2,\dots,u_s\}\), completing the induction.

$\Leftarrow$  If $u_j=x_{2\ell}x_{2\ell+1}$ for some $2\leq j\leq s$, then, since $2\ell+2\leq 2m$ is a vertex of $G$ such that $2\ell+2$ is adjacent to $2\ell+1$, we conclude that $2\ell\in \set(\beta)$ by Lemma~\ref{Basic}. If  $\{{\bf e}_1, {\bf e}_3, \ldots, {\bf e}_{2\ell-3}\}\subseteq \{u_2,\ldots,u_s\},$ then $2m$ is even-connected to $2\ell-1$ with respect to $u_2\cdots u_s$  by the following walk\[
2m - \underbrace{1 - 2}_{\mathbf{e}_1} - \underbrace{3 - 4}_{\mathbf{e}_3} - \cdots - \underbrace{(2\ell-3) - (2\ell-2)}_{\mathbf{e}_{2\ell-3}} - (2\ell-1).
\]
Hence, $2\ell$ belongs to $\set(\beta)$ by Lemma~\ref{Basic}.

The proof of (2) is similar to the proof of (1), we omit it.
\end{proof}

In view of this lemma, we see that the maximal set among all sets of the form \( \set(\beta) \), where \( \beta \in \mathcal{G}(I_c(G)^s) \) for some \( s \ge 1 \), is \( [n-2] \).
Next, we consider the case when $n$ is odd.

\begin{Lemma} \label{odd} Let $G$ be the odd cycle of length $n=2m+1$ and let $I$ be the complementary edge ideal of $G$. Take $\beta=\frac{\alpha^s}{u_1u_2\cdots u_s}\in I^s$, where $u_j\in I(G)$ for all $j$. Then the following conclusions hold:

\begin{enumerate}
\item If $u_1={\bf e}_{2\ell-1}=x_{2\ell-1}x_{2\ell}$ for some $1\leq \ell\leq m-1$,   then $2\ell$ belongs to $\set(\beta)$ if and only if either $u_j=x_{2\ell}x_{2\ell+1}$ for some $j\geq 2$ or $\{{\bf e}_1, {\bf e}_3, \ldots, {\bf e}_{2\ell-3}\}\subseteq \{u_2,\ldots,u_s\}.$

\item If $u_1={\bf e}_{2\ell}=x_{2\ell}x_{2\ell+1}$ for some $1\leq \ell\leq m-2$,   then $2\ell+1$ belongs to $\set(\beta)$ if and only if either $u_j=x_{2\ell+1}x_{2\ell+2}$ for some $j\geq 2$ or $\{{\bf e}_{n}, {\bf e}_2, {\bf e}_4,\ldots, {\bf e}_{2\ell-2}\}\subseteq \{u_2,\ldots,u_s\}.$

\item $2m\in \set(\beta)$ if and only if $\{{\bf e}_1,{\bf e}_3\ldots, {\bf e}_{2m-3},{\bf e}_{2m-1}\}\subseteq \{u_1,\ldots,u_s\}.$
\end{enumerate}

\begin{proof} The proofs of (1) and (2) are similar to those of Lemma~\ref{even}, so we only present the proof of (3).

(3) Suppose that $\{{\bf e}_1,{\bf e}_3\ldots, {\bf e}_{2m-3},{\bf e}_{2m-1}\}\subseteq \{u_1,\ldots,u_s\}$. We may assume that $u_1={\bf e}_{2m-1}$ without loss of generality. Then, since $2m+1$ is even-connected to $2m-1$ with respect to $u_2\cdots u_{s}$ by the following walk:
$$(2m+1)-1-2-3-4-\cdots-(2m-3)-(2m-2)-(2m-1),$$
 we conclude that $2m \in \set(\beta)$ by Lemma~\ref{Basic}.

 Conversely, if \(2m \in \text{set}(\beta)\), then there exists some \(j \in [s]\) such that \(u_j \in \{x_{2m}x_{2m+1}, x_{2m-1}x_{2m}\}\). Without loss of generality, we may assume \(j=1\). We analyze the following cases:

 (i) $u_1=x_{2m}x_{2m+1}={\bf e}_{2m}$: By  Lemma~\ref{Basic}, $2m+1$ is even-connected to itself with respect to $u_2u_3\cdots u_{s}$.  We may assume  the following  is a shortest walk by which $2m+1$ is even-connected to itself with respect to $u_2u_3\cdots u_{s}$: $$2m+1=y_0-y_1-y_2-\cdots-y_{2t-1}-y_{2t}-y_{2t+1}=2m+1, (\dag)$$

It is clear that \( y_1 \in \{1, 2m\} \). We only consider the case when \( y_1 = 1 \), as the proof for the other case is dual. Since \( y_1 = 1 \), we have \( y_2 \in \{2, 2m+1\} \). However, if \( y_2 = 2m+1 \), then the sequence \( 2m+1 - y_3 - y_4 - \cdots - (2m-1) - (2m) - (2m+1) \) also forms a walk by which \( 2m+1 \) is even-connected to itself. This contradicts the assumption that walk (\(\dag\)) is shortest. Thus, \( y_2 = 2 \). This implies \( y_3 = 3 \) or \( y_3 = 1 \). If \( y_3 = 1 \), walk (\(\dag\)) can be shortened, which is another contradiction. Therefore, \( y_3 = 3 \). By analogy, we conclude \( y_4 = 4, y_5 = 5, \ldots \). Finally, we have \( t = m \) and \( y_{2m} = 2m \), which implies \( \{\mathbf{e}_1, \mathbf{e}_3, \ldots, \mathbf{e}_{2m-1}\} \subseteq \{u_2, u_3, \ldots, u_s\} \). In particular, \( \{\mathbf{e}_1, \mathbf{e}_3, \ldots, \mathbf{e}_{2m-1}\} \subseteq \{u_1, u_2, \ldots, u_s\} \), as required.

(ii)  $u_1=x_{2m-1}x_{2m}={\bf e}_{2m-1}$: By  Lemma~\ref{Basic}, $2m+1$ is even-connected to $2m-1$ with respect to $u_2u_3\cdots u_{s}$.  We may assume  the following walk is a shortest walk by which $2m-1$ is even-connected to $2m+1$ with respect to $u_2u_3\cdots u_{s}$: $$2m-1=y_0-y_1-y_2-\cdots-y_{2t-1}-y_{2t}-y_{2t+1}=2m+1.$$
If $y_1=2m$, then $y_2=2m+1$. It follows that $2m+1-y_3-y_4-y_5-y_6-\cdots-y_{2t-1}-y_{2t}-y_{2t+1}=2m+1$ is a shortest walk that
even-connects $2m+1$ to itself with respect to $u_2\cdots u_2$. By the conclusion of (i), it follows that $\{{\bf e}_1,{\bf e}_3,\cdots {\bf e}_{2m-1}\}\subseteq \{u_2,u_3,\ldots,u_s\}$.

It remains to consider the case when $y_1=2m-2$. In this case, we can conclude, as in  case (i), that   $y_2=2m-3, y_3=2m-4,\cdots,$ and that $y_{2m-3}=2$ and  $y_{2m-2}=1$. Hence, we have $\{{\bf e}_1,{\bf e}_3,\cdots {\bf e}_{2m-1}\}\subseteq \{u_2,u_3,\ldots,u_s\}$ again.
\end{proof}
\end{Lemma}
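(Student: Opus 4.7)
The plan is to invoke Lemma~\ref{Basic} after suitably reordering the factors in $\beta = \alpha^s/(u_1\cdots u_s)$; since $G$ is a cycle, this factorization is not canonical, so we are free to bring any desired factor to the first slot. Two structural features of the odd $n$-cycle will drive everything: each vertex has exactly two neighbors (its cyclic predecessor and successor), and the whole cycle is itself an odd closed walk in $G$ of length $n = 2m+1$, while deleting any single vertex from $G$ leaves a bipartite path whose closed walks all have even length.

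For parts~(1) and~(2) I would follow the strategy of Lemma~\ref{even} essentially verbatim, merely checking that the length $n = 2m+1$ and the wrap-around edge $\mathbf{e}_n = \mathbf{e}_0$ do not break the argument. The reverse direction of~(1) is constructive: either reorder $u_j = x_{2\ell}x_{2\ell+1}$ into the first slot and apply Lemma~\ref{Basic} using the neighbor $2\ell+2$ of $2\ell+1$, or, given the chain of odd edges $\mathbf{e}_1,\mathbf{e}_3,\ldots,\mathbf{e}_{2\ell-3}$, exhibit the walk
\[
n - 1 - 2 - 3 - \cdots - (2\ell-2) - (2\ell-1),
\]
whose even-position edges are precisely those odd edges, showing that $n$ is even-connected to $2\ell-1$. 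For the forward direction, Lemma~\ref{Basic} supplies an even-connection walk; choosing a shortest such walk and inducting on $\ell$, minimality forbids backtracking and forces the walk to step outward one vertex at a time, pinning down $\mathbf{e}_{2\ell-3}$ immediately and leaving the inductive hypothesis (applied to the truncated walk) to supply $\mathbf{e}_1,\ldots,\mathbf{e}_{2\ell-5}$. Part~(2) is entirely symmetric, with $\mathbf{e}_n = \mathbf{e}_0$ and the even-indexed edges taking over the role of the odd ones.

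Part~(3) is the genuinely new feature of the odd case. Since $2m+1$ is the unique $k \in [n]$ strictly larger than $2m$, and $x_{2m}$ appears only in the edges $\mathbf{e}_{2m-1}$ and $\mathbf{e}_{2m}$, after reordering there are only two cases for the factor containing $x_{2m}$. For ($\Leftarrow$), put $u_1 = \mathbf{e}_{2m-1}$ and exhibit the walk
\[
(2m+1) - 1 - 2 - 3 - \cdots - (2m-2) - (2m-1),
\]
whose even-position edges are exactly $\mathbf{e}_1,\mathbf{e}_3,\ldots,\mathbf{e}_{2m-3}$; these lie in $\{u_2,\ldots,u_s\}$ by hypothesis, so $2m-1$ is even-connected to $2m+1 > 2m$ and Lemma~\ref{Basic} yields $2m \in \set(\beta)$. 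For ($\Rightarrow$), if $u_1 = \mathbf{e}_{2m}$ then $2m+1$ must be even-connected to itself via an odd closed walk in $G$; no such walk can have length smaller than $n$, since a shorter closed walk would avoid some vertex $v$ and therefore lie in the bipartite path $G \setminus \{v\}$, all of whose closed walks have even length. Moreover, the $\mathbb{F}_2$ cycle-space argument forces an odd closed walk of length exactly $n$ to use each edge of $G$ exactly once, hence to be an Eulerian traversal of the cycle; a direct inspection shows that both of its two orientations place every odd edge $\mathbf{e}_1,\mathbf{e}_3,\ldots,\mathbf{e}_{2m-1}$ at an even walk position, so all odd edges lie in $\{u_2,\ldots,u_s\}$. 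The subcase $u_1 = \mathbf{e}_{2m-1}$ is handled by the same shortest-walk strategy applied to a walk from $2m-1$ to $2m+1$ of length $2m-1$.

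The main obstacle I anticipate is this shortest-walk bookkeeping in part~(3): justifying the lower bound on the length of the witnessing odd walk via the bipartite-complement observation, and then using the cycle-space constraint to deduce that such a shortest walk must traverse the full cycle. Once these ingredients are in place, verifying that all odd-indexed edges end up at even walk positions (and hence are forced into $\{u_1,\ldots,u_s\}$) is a routine direct computation, and the remaining inductive bookkeeping in parts~(1) and~(2) proceeds in exact parallel with Lemma~\ref{even}.
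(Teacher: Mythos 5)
Your bound that an odd closed walk in the cycle $C_n$ must have length at least $n$ is correct (a closed walk of length $\ell<n$ visits at most $\ell<n$ distinct vertices, hence misses some vertex $v$, and $C_n\setminus\{v\}$ is a bipartite path in which every closed walk has even length), and your $\mathbb{F}_2$ cycle-space observation is also correct (an odd closed walk in $C_n$ must use every edge an odd number of times, so if its length is exactly $n$ it is an Eulerian traversal of the cycle, and both traversals place $\mathbf{e}_1,\mathbf{e}_3,\ldots,\mathbf{e}_{2m-1}$ at the paid positions).

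However, there is a genuine gap: you never establish that a witnessing walk of length \emph{exactly} $n$ exists. Even-connectedness supplies \emph{some} odd closed walk from $2m+1$ to itself satisfying the multiplicity constraints on paid positions; all you have shown is that such a walk has length $\geq n$, and that \emph{if} it has length $n$ then the conclusion follows. A priori the shortest legal witness could be longer than $n$ (the cycle admits odd closed walks of every odd length $\geq n$, and the multiplicity constraint is only on the paid positions), so the Eulerian step does not yet apply. One cannot simply take a shortest odd closed walk in $C_n$, because that minimizer need not respect the paid-edge multiplicity constraint; and one cannot simply excise a repeated segment from a longer walk, because an odd excision flips the parity of all subsequent positions and so scrambles which edges are paid. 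The paper bridges exactly this gap: it fixes a \emph{shortest witnessing} walk and shows, step by step from minimality (backtracking or touching $2m+1$ early would yield a shorter legal witness), that $y_1=1,\ y_2=2,\ \ldots,\ y_{2m}=2m$, which simultaneously forces the length to be $n$ and reads off the paid edges. Your proposal omits this no-backtracking induction.

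A secondary, related issue: for the subcase $u_1=\mathbf{e}_{2m-1}$ your walk from $2m-1$ to $2m+1$ is not closed, so the $\mathbb{F}_2$ cycle-space argument does not apply to it directly; ``the same shortest-walk strategy'' is asserted but would need to be spelled out (the paper reduces this subcase to the closed-walk subcase by peeling off the first two steps when $y_1=2m$, and otherwise runs the outward-stepping argument in the other direction). If you supply the minimality/no-backtracking argument to pin down the length of the shortest legal witness, your cycle-space route does give a clean alternative proof; without it, the argument is incomplete at precisely the point the paper works hardest.
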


By the preceding lemmas, we establish the following lemma, which plays a crucial role in obtaining the main result of this subsection.
\begin{Lemma} \label{EO} Let $G$ be the cycle of length $n$, whose vertices are labelled $1,2,\ldots,n$ successively. Let ${\bf e}_j$ denote the edge $x_{j}x_{j+1}$ for $j=1,\ldots,n-1$ and set ${\bf e}_0={\bf e}_n=x_nx_1$. Take $\beta=\frac{\alpha^s}{u_1u_2\cdots u_s}\in I^s$. Let $T:=\{ u_1, \ldots, u_s\}$.
We define
\[
N_1(\beta) =
\begin{cases}
\max\left\{ [2\ell] \mid \{\mathbf{e}_1, \mathbf{e}_3, \ldots, \mathbf{e}_{2\ell-1}\} \subseteq T,\ 1\leq \ell\leq m-1 \right\}, & n=2m; \\
\max\left\{ [2\ell] \mid \{\mathbf{e}_1, \mathbf{e}_3, \ldots, \mathbf{e}_{2\ell-1}\} \subseteq T,\ 1\leq \ell\leq m \right\}, & n=2m+1,
\end{cases}
\]
where the maximum is taken with respect to set inclusion. We further define
\[
N_2(\beta) =
\begin{cases}
\max\left\{ [2\ell+1] \mid \{\mathbf{e}_0, \mathbf{e}_2, \ldots, \mathbf{e}_{2\ell}\} \subseteq T,\ 1\leq \ell\leq m-2 \right\}, & n=2m; \\[2pt]
\max\left\{ [2\ell+1] \mid \{\mathbf{e}_0, \mathbf{e}_2, \ldots, \mathbf{e}_{2\ell}\} \subseteq T,\ 1\leq \ell\leq m-1 \right\}, & n=2m+1,
\end{cases}
\]
where the maximum is also taken with respect to set inclusion.

By convention, if ${\bf e}_1\notin \{ u_1, \ldots, u_s\}$ then $N_1(\beta)=\emptyset$, and if ${\bf e}_0\notin \{ u_1, \ldots, u_s\}$ then $N_2(\beta)=\emptyset$.
Then $$\set(\beta)=N_1(\beta)\cup N_2(\beta) \cup \left (\{ \min({u_j}):\ 1\leq j\leq s\}\setminus \{n-1\}\right).$$

\end{Lemma}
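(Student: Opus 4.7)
The plan is to derive the formula by consolidating Lemma~\ref{Basic} with the parity-based characterizations already established in Lemmas~\ref{even} and~\ref{odd}. By Lemma~\ref{Basic}, $\set(\beta)$ is contained in the set of vertices incident to the edges $u_1,\ldots,u_s$, so every candidate $i\in\set(\beta)$ arises either as the minimum or as the (strict) maximum of some $u_t\in T$. The strategy is to prove both inclusions by separating these two roles of $i$ and invoking the correct sub-lemma for each parity of $i$.

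For the inclusion $\supseteq$, the piece $\{\min(u_j)\}\setminus\{n-1\}$ is handled by a direct application of Lemma~\ref{Basic}: if $u_t={\bf e}_i$ with $i\in[n-2]$, then the other endpoint $i+1$ has neighbor $i+2>i$ in the cycle, and if $u_t={\bf e}_0$ (so $i=1$), then $n$ has neighbor $n-1>1$. For $N_1(\beta)=[2\ell^{*}]$ with $\{{\bf e}_1,{\bf e}_3,\ldots,{\bf e}_{2\ell^{*}-1}\}\subseteq T$, the odd elements $2\ell-1$ are themselves minima of edges in $T$ and thus fall into the preceding case; each even element $2\ell\le 2\ell^{*}$ is treated by reordering so that $u_1={\bf e}_{2\ell-1}$ and invoking Lemma~\ref{even}(1) or Lemma~\ref{odd}(1), since the alternating chain $\{{\bf e}_1,\ldots,{\bf e}_{2\ell-3}\}$ sits in $\{u_2,\ldots,u_s\}$. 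The odd-cycle boundary $2\ell=2m=n-1$ is handled instead by Lemma~\ref{odd}(3). The inclusion $N_2(\beta)\subseteq\set(\beta)$ is entirely dual, appealing to Lemmas~\ref{even}(2) and~\ref{odd}(2) together with the observation $1=\min({\bf e}_0)\in\{\min(u_j)\}$.

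For the inclusion $\subseteq$, fix $i\in\set(\beta)$. If $i=\min(u_t)$ for some $t$ and $i\ne n-1$, there is nothing to prove. If $i=n-1$, then necessarily $u_t={\bf e}_{n-1}$ and Lemma~\ref{Basic} requires $n$ to be even-connected to itself by $u_2\cdots u_s$; this is impossible on an even cycle (bipartiteness) and, on the odd cycle, forces $\{{\bf e}_1,{\bf e}_3,\ldots,{\bf e}_{2m-1}\}\subseteq T$ by Lemma~\ref{odd}(3), hence $i\in N_1(\beta)$. If instead ${\bf e}_i\notin T$ (so that $i$ appears only as the larger endpoint of some $u_t={\bf e}_{i-1}$), then the only-if direction of Lemma~\ref{even}(1)/\ref{odd}(1) (for even $i$) or Lemma~\ref{even}(2)/\ref{odd}(2) (for odd $i$) produces exactly the alternating-chain condition placing $i$ in $N_1(\beta)$ or $N_2(\beta)$.

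The main obstacle is not conceptual but organizational: carefully reconciling the asymmetric roles of ${\bf e}_0=x_nx_1$ (whose minimum $1$ belongs to $\{\min(u_j)\}$ while its maximum $n$ can never lie in $\set(\beta)$, since there is no $k>n$), the special status of $i=n-1$ (which triggers Lemma~\ref{odd}(3) in the odd cycle), and the range bounds on $\ell$ in the definitions of $N_1(\beta)$ and $N_2(\beta)$ as they vary with the parity of $n$. Once this case structure is laid out, each element of $\set(\beta)$ is matched to the correct clause of the right-hand side by a direct invocation of the appropriate earlier lemma.
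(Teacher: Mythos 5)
Your proposal is correct and takes essentially the same route as the paper: the forward inclusion $\subseteq$ is proved by case-splitting on whether $j$ is the smaller endpoint of some $u_t$, whether $j=n-1$, and the parity of $j$, and then invoking the appropriate clause of Lemma~\ref{even} or Lemma~\ref{odd}, while the reverse inclusion $\supseteq$ comes from Lemma~\ref{Basic} for the $\min(u_j)$ part and from the ``if'' directions of Lemmas~\ref{even} and \ref{odd} for $N_1(\beta)\cup N_2(\beta)$. You are in fact a bit more explicit than the paper on the $\supseteq$ direction (which the paper dispatches with a one-line appeal to Lemma~\ref{odd}); the only shared loose end is that the stated range $1\le\ell\le m-2$ in Lemma~\ref{odd}(2) does not literally cover the boundary index $j=2m-1$ in the odd cycle, a gap inherited from the cited lemma itself rather than from your argument.
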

Before the proof, we made some remarks. If $n=2m+1$ is an odd number,  any $\beta\in \mathcal{G}(I^s)$ is expressed as $\frac{\alpha^s}{u_1u_2\cdots u_s}$ uniquely. Hence, there is no ambiguity in the definitions of $N_1(\beta)$ and $N_2(\beta)$. However, if $n$ is even, we should note that the elements of $N_1(\beta)$ and $N_2(\beta)$ depend on the way $\beta$ is written as $\frac{\alpha^s}{u_1u_2\cdots u_s}$. For example, let $\beta_1=\frac{\alpha^m}{{\bf e}_1{\bf e}_3\cdots {\bf e}_{2m-1}}$ and $\beta_2=\frac{\alpha^m}{{\bf e}_2{\bf e}_4\cdots {\bf e}_{2m}}$. Then $\beta_1=\beta_2$. However, $N_1(\beta_1)=[2m-2]$, $N_1(\beta_2)=\emptyset$, $N_2(\beta_1)=\emptyset$ and $N_2(\beta_2)=[2m-3]$.
Of course, $\set(\beta)$ is independent of the expression of $\beta$. In the above example, note that $\min({\bf e}_{2m-2})=2m-2$.  we have $\set(\beta_1)=\set(\beta_2)=[2m-2]$. Since the only real relation that occurs among ${\bf e}_1,\ldots,{\bf e}_n$ is ${\bf e}_1{\bf e}_3\cdots {\bf e}_{2m-1}= {\bf e}_2\cdots {\bf e}_{2m-2}{\bf e}_{2m}$, the set $\set(\beta)$ given in Lemma~\ref{EO} is actually independent of the expression of $\beta$.

\begin{proof} We only consider the case when \( n = 2m + 1 \) (an odd number), as the proof for the other case is analogous and even simpler. Let \( j \in \mathrm{set}(\beta) \). Then \( j \in [n-1] \) and \( j \in u_k \) for some \( 1 \leq k \leq s \). If \( j = n-1 = 2m \), then by Lemma~\ref{odd}.(3), \( \{{\bf e}_1, {\bf e}_3, \ldots, {\bf e}_{2m-3}, {\bf e}_{2m-1}\} \subseteq \{u_1, \ldots, u_s\} \), which implies \( j \in N_1(\beta) \). Thus, we may assume \( j \leq 2m - 1 \). If \( j = \min(u_k) \) for some \( k\in [s] \), then \( j \in \{\min( u_j) :\ 1 \leq j \leq s\} \setminus \{n-1\} \), and we are done.

 Suppose now that $j\leq 2m- 1$ and  \( j \neq \min(u_k) \) for any \( k = 1, \ldots, s \). By Lemma~\ref{odd}, we conclude that if \( j = 2\ell \) for some \( \ell \), then \( \{{\bf e}_1, {\bf e}_3, \ldots, {\bf e}_{2\ell-1}\} \subseteq \{u_1, \ldots, u_s\} \);
and if \( j = 2\ell + 1 \) for some \( \ell \), then \( \{{\bf e}_n, {\bf e}_2, \ldots, {\bf e}_{2\ell}\} \subseteq \{u_1, \ldots, u_s\} \).
Furthermore, since \( j \leq 2m-1 \), we have:
If \( j = 2\ell + 1 \), then \( \ell \leq m - 1 \);
If \( j = 2\ell \), then \( \ell \leq m - 1 \).
This implies \( j \in N_1(\beta) \cup N_2(\beta) \), so the inclusion \( \subseteq \) holds. The converse inclusion follows directly from Lemma~\ref{odd}.
\end{proof}

As an immediate  consequence of Lemma~\ref{EO}, we obtain:

\begin{Corollary} Keep the notation as before.   Let $\beta=\frac{\alpha^s}{{\bf e}_1^{s_1}{\bf e}_2^{s_2}\cdots {\bf e}_n^{s_n}}\in \mathcal{G}(I^s)$. Define $t_j=1$ if $s_j>0$ and   $t_j=0$ if $s_j=0$, for $1\leq j\leq n$.
Then $\set(\beta)=\set (\gamma)$, where $\gamma=\frac{\alpha^t}{{\bf e}_1^{t_1}{\bf e}_2^{t_2}\cdots {\bf e}_n^{t_n}}$ and $t=t_1+\cdots+t_n$.
\end{Corollary}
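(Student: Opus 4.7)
The plan is to apply Lemma~\ref{EO} to both $\beta$ and $\gamma$ and observe that each of the three constituent pieces — $N_1(-)$, $N_2(-)$, and $\{\min(u_j)\mid 1\leq j\leq s\}\setminus\{n-1\}$ — depends only on the \emph{support} of the exponent vector $(s_1,\ldots,s_n)$, not on the actual multiplicities.

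First I would write $\beta=\alpha^s/({\bf e}_1^{s_1}\cdots {\bf e}_n^{s_n})$ and take the corresponding decomposition $u_1\cdots u_s={\bf e}_1^{s_1}\cdots {\bf e}_n^{s_n}$, so that the underlying set $T_{\beta}=\{u_1,\ldots,u_s\}$ equals $\{{\bf e}_j\mid s_j>0\}=\{{\bf e}_j\mid t_j>0\}$. Analogously, for $\gamma=\alpha^t/({\bf e}_1^{t_1}\cdots {\bf e}_n^{t_n})$, since every $t_j\in\{0,1\}$, the natural decomposition has underlying set $T_{\gamma}=\{{\bf e}_j\mid t_j>0\}$. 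Thus $T_{\beta}=T_{\gamma}$ as subsets of $E(G)$.

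Next I would inspect the three pieces in the formula of Lemma~\ref{EO}. The sets $N_1(\beta)$ and $N_2(\beta)$ are defined purely via the set-theoretic conditions $\{{\bf e}_1,{\bf e}_3,\ldots,{\bf e}_{2\ell-1}\}\subseteq T_{\beta}$ and $\{{\bf e}_0,{\bf e}_2,\ldots,{\bf e}_{2\ell}\}\subseteq T_{\beta}$, so $N_1(\beta)=N_1(\gamma)$ and $N_2(\beta)=N_2(\gamma)$. Likewise, $\{\min(u_j)\mid 1\leq j\leq s\}=\{\min({\bf e}_j)\mid s_j>0\}=\{\min({\bf e}_j)\mid t_j>0\}$, which matches the corresponding set for $\gamma$. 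Applying Lemma~\ref{EO} to each, we conclude $\set(\beta)=\set(\gamma)$.

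The only subtlety — which I expect to be the main obstacle requiring a sentence of care — is that when $n=2m$ is even, the expression $\beta=\alpha^s/({\bf e}_1^{s_1}\cdots {\bf e}_n^{s_n})$ is not unique because of the syzygy ${\bf e}_1{\bf e}_3\cdots {\bf e}_{2m-1}={\bf e}_2{\bf e}_4\cdots {\bf e}_{2m}$. This could a priori lead to two different support vectors $(t_1,\ldots,t_n)$ and hence two different candidates for $\gamma$. However, as the authors already observe in the paragraph following Lemma~\ref{EO}, the set $\set(\beta)$ produced by the formula is invariant under this relation, so both choices give the same answer; hence the statement is well-posed and the argument above applies uniformly in either parity.
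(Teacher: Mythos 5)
Your proposal is correct and is exactly the argument the paper implicitly has in mind: the paper states the corollary as an ``immediate consequence'' of Lemma~\ref{EO} without writing out a proof, and your observation that each of $N_1(-)$, $N_2(-)$, and $\{\min(u_j)\}\setminus\{n-1\}$ depends only on the support set $T=\{{\bf e}_j : s_j>0\}$, which coincides for $\beta$ and $\gamma$, is precisely why the equality of $\set$-sets follows. Your remark on well-posedness in the even case is also exactly the caveat the authors flag in the paragraph immediately preceding the proof of Lemma~\ref{EO}, so nothing is missing.
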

By this corollary, it is straightforward to verify that  $\mathrm{HS}_i(\mathcal{R(}I))$ is generated in degree $\leq n$ for all $i\geq 1$. This result is sharpened in Corollary~\ref{degree}.

Hereafter, we focus on computing the homological shift ideals of powers of the complementary edge ideal associated with a cycle graph. Note that if $s<\frac{i}{2}$, then $\mathrm{HS}_i(I^s)=0$, so we only consider the case when $s\geq \frac{i}{2}$. In the rest of this section, we always use the convention that \( I^{t} = 0 \) for all \( t < 0 \).

\begin{Lemma} \label{half}
Let \( G \) be a cycle of length \( n \) and \( I \) be its complementary edge ideal. For \( 1 \leq i \leq n-1 \) and \( s \geq \frac{i}{2} \), the following inclusion holds:
\begin{equation} \label{subset}
\mathrm{HS}_i(I^s) \subseteq \sum_{0 \leq k \leq \frac{i}{2}} \left( \sum_{\substack{F \subseteq [n] \\ |F| = i - 2k}} \frac{\alpha^{i - k}}{{\bf x}_F} \cdot I^{s - i + k} \right).
\end{equation}
\end{Lemma}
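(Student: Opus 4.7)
The plan is to fix an arbitrary minimal generator $\mathbf{x}_A\beta$ of $\mathrm{HS}_i(I^s)$, where $\beta=\alpha^s/(u_1\cdots u_s)\in\mathcal{G}(I^s)$ with each $u_\ell\in\mathcal{G}(I(G))$ and $A\subseteq\set(\beta)$ with $|A|=i$, and to exhibit an integer $k\in[0,\lfloor i/2\rfloor]$ together with a set $F\subseteq [n]$ of cardinality $i-2k$ for which $\mathbf{x}_A\beta\in\frac{\alpha^{i-k}}{\mathbf{x}_F}\cdot I^{s-i+k}$.

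First, I would reformulate the target combinatorially. Letting $m_r$ denote the multiplicity of ${\bf e}_r$ in the multiset $T=\{u_1,\ldots,u_s\}$, membership in $\frac{\alpha^{i-k}}{\mathbf{x}_F}\cdot I^{s-i+k}$ with $|F|=i-2k$ is equivalent to the existence of integers $\delta_r\in[0,m_r]$ for $r\in[n]$ and a set $F\subseteq[n]$ satisfying
\[
\delta_{j-1}+\delta_j=[j\in A]+[j\in F] \quad\text{for every } j\in[n]
\]
(with indices taken modulo $n$). Given such data, $k=(i-|F|)/2$ and the required factor in $I^{s-i+k}$ is the generator $\alpha^{s-i+k}/\prod_r{\bf e}_r^{m_r-\delta_r}$. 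Thus the task reduces to producing $\delta$ obeying the coverage inequality $\delta_{j-1}+\delta_j\ge[j\in A]$ (each $A$-vertex is incident to a removed edge) together with the cap inequality $\delta_{j-1}+\delta_j\le[j\in A]+1$ (so that the induced $F$ is indeed a set).

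Next, I would construct $\delta$ by a greedy rule. By Lemma~\ref{Basic}, each $j\in A$ has at least one of ${\bf e}_{j-1},{\bf e}_j$ in $T$, so coverage is locally feasible; whenever consecutive $A$-vertices $j,j+1$ can share a common edge ${\bf e}_j\in T$, a single removal of ${\bf e}_j$ covers both. The main obstacle is verifying the cap at non-$A$ vertices wedged between two $A$-vertices; the dangerous configuration is $j\notin A$, $j-1,j+1\in A$, with ${\bf e}_{j-1},{\bf e}_j\in T$ and ${\bf e}_{j-2},{\bf e}_{j+1}\notin T$, which would force $\delta_{j-1}=\delta_j=1$ and violate the cap at $j$. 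I would rule this out via Lemma~\ref{EO}: under these hypotheses, the membership $j+1\in\set(\beta)$ would require $j+1\in N_1(\beta)\cup N_2(\beta)$ (because ${\bf e}_{j+1}\notin T$ and also $j+1\neq n-1$), but the alternating chain $\{{\bf e}_1,{\bf e}_3,\ldots\}$ or $\{{\bf e}_0,{\bf e}_2,\ldots\}$ defining $N_1$ or $N_2$ must contain either ${\bf e}_{j-2}$ or ${\bf e}_{j+1}$, contradicting their absence from $T$. Finally, $|F|\le i$ follows from the cap, while $|F|\le 2s-i$ (equivalently $s-i+k\ge 0$) follows from $\sum_r\delta_r=(i+|F|)/2\le\sum_r m_r=s$, where the standing hypothesis $s\ge i/2$ is used.
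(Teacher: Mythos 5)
Your reformulation is a genuinely different and rather elegant route. You turn the question into a local feasibility problem: find $\delta_r\in[0,m_r]$ with the vertex balance $\delta_{j-1}+\delta_j=[j\in A]+[j\in F]$, equivalently the coverage bound $\delta_{j-1}+\delta_j\ge[j\in A]$ and the cap $\delta_{j-1}+\delta_j\le[j\in A]+1$, with $F$ then read off and $k$, the power drop, forced by $\sum_r\delta_r$. This bookkeeping is correct, and the degree/power accounting at the end is fine. The paper instead splits $Q$ via Lemma~\ref{EO} into $Q_1=Q\cap(N_1(\beta)\cup N_2(\beta))$ and its complement, then extracts $q$ full edges and the remaining half-edges from $Q_1$ directly; your approach is more conceptual but has to pay for it by actually proving feasibility of the $\delta$-system, which is where the gap is.

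The gap: you identify a single local obstruction ($j-1,j+1\in A$, $j\notin A$, ${\bf e}_{j-1},{\bf e}_j\in T$, ${\bf e}_{j-2},{\bf e}_{j+1}\notin T$) and rule it out via Lemma~\ref{EO}. But this is only the shortest case of a forcing \emph{cascade}. In general, if $j_0,j_0+2,\ldots,j_0+2k\in A$ with all intermediate odd-spaced vertices not in $A$, ${\bf e}_{j_0-1}\notin T$, and ${\bf e}_{j_0},{\bf e}_{j_0+2},\ldots,{\bf e}_{j_0+2k-2}\in T$, then $\delta_{j_0}=1$ is forced, the cap at $j_0+1$ forces $\delta_{j_0+1}=0$, coverage at $j_0+2$ forces $\delta_{j_0+2}=1$, and so on; if ${\bf e}_{j_0+2k}\notin T$ the cascade terminates in a \emph{coverage failure} at $j_0+2k$ rather than a cap violation, and the intermediate edges ${\bf e}_{j_0+1},{\bf e}_{j_0+3},\ldots$ can freely lie in $T$, so your specific configuration need not occur anywhere along the chain. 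Your argument as written does not address this. In fact the cascade \emph{is} ruled out, but only by a longer version of the same parity argument: $j_0+2k\in\set(\beta)$ with ${\bf e}_{j_0+2k}\notin T$ forces $j_0+2k\in N_1\cup N_2$, so the corresponding alternating chain $\{{\bf e}_1,{\bf e}_3,\ldots\}$ or $\{{\bf e}_0,{\bf e}_2,\ldots\}$ contains all edges of a fixed parity up to index at least $j_0+2k-1$; since $j_0-1$ and $j_0+2k$ differ by $2k+1$ (odd), exactly one of them has the right parity to lie in that chain, and in either case one of ${\bf e}_{j_0-1}$, ${\bf e}_{j_0+2k}$ is forced into $T$, a contradiction. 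You should also pin down the unnamed ``greedy rule'': without a precise construction it is not clear that the only possible infeasibility comes from cascades of the above shape (it does, because the binary constraints are exactly ``not both $0$'' on $A$-vertices and ``not both $1$'' on non-$A$-vertices, so forcing propagates in alternating steps), but that needs to be stated. With the greedy specified and the general cascade handled, your argument closes; as written it is incomplete.
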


\begin{proof}
We only prove the case where \( n = 2m \) (even cycle), as the argument for \( n = 2m + 1 \) (odd cycle) is analogous. Fix \( i \in [2m - 2] \) and \( s \geq \frac{i}{2} \). Let \( \beta \in \mathcal{G}(I^s) \) with \( i \leq |\mathrm{set}(\beta)| \), and take a subset \( Q \subseteq \mathrm{set}(\beta) \) with \( |Q| = i \). It suffices to show that \( {\bf x}_Q \beta \) lies in the right-hand side of \eqref{subset}.

Write $\beta=\frac{\alpha^s}{u_1\cdots u_s}$ with $u_j\in \mathcal{G}(I(G))$ for all $j=1,\ldots, s$.   Set \( T := \{u_1, \ldots, u_s\} \). Denote $$\ell_1:=\max\left \{1\leq \ell\leq m-1:\ \{{\bf e}_1,{\bf e}_3,\ldots {\bf e}_{2\ell-1}\}\subseteq T \right\}$$ and  $$\ell_2:=\max\{1\leq \ell\leq m-2:\ \{{\bf e}_0,{\bf e}_2,\ldots {\bf e}_{2\ell}\}\subseteq T\}.$$
By convention, $\max \emptyset=-\infty$. By Lemma~\ref{EO}, $N_1(\beta)\cup N_2(\beta)=[2\ell_1]\cup [2\ell_2+1]$.  We consider two cases:

(1) \( Q\cap \left( [2\ell_1] \cup [2\ell_2 + 1] \right) = \emptyset \): By Lemma~\ref{EO}, there exist distinct edges \( v_1, \ldots, v_i \in T \) such that \( Q = \{\min(v_1), \ldots, \min(v_i)\} \). We may write \( u_1u_2\cdots u_s = (v_1\cdots v_i)(u_1\cdots u_{s-i}) \) (after reindexing if necessary). Note that \( \max(v_1), \ldots, \max(v_i) \) are pairwise distinct; let \( F \) denote the \( i \)-element set \( \{\max(v_1), \ldots, \max(v_i)\} \).

Then
\[
{\bf x}_Q \beta = \frac{\alpha^s}{{\bf x}_F \cdot u_1\cdots u_{s-i}} = \frac{\alpha^i}{{\bf x}_F} \cdot \frac{\alpha^{s-i}}{u_1\cdots u_{s-i}}.
\]
Hence, \( {\bf x}_Q \beta \) belongs to the \( k = 0 \) component of the right-hand side of \eqref{subset}.

(2) $Q_1:=Q \cap \left( [2\ell_1] \cup [2\ell_2 + 1] \right) \neq \emptyset$: In this case, we can decompose \( Q \) as
\[
Q = Q_1 \sqcup \{j_{p+1} < \cdots < j_i\},
\]
where \( |Q_1| = p \), \( j_{p+1} > \max Q_1 \), and for all \( k = p+1, \ldots, i \),
\[
{\bf e}_{j_k} \in \{u_1, u_2, \ldots, u_s\} \setminus \{{\bf e}_1, {\bf e}_3, \ldots, {\bf e}_{2\ell_1-1}, {\bf e}_0, {\bf e}_2, \ldots, {\bf e}_{2\ell_2}\}.
\]
Note that if $p=i$, then $\{j_{p+1} < \cdots < j_i\}$ is an empty set.

First, consider the subcase \( \ell_2 \geq \ell_1 \). Here, \( Q_1 \subseteq {\bf e}_0 \cup {\bf e}_2 \cup \cdots \cup {\bf e}_{2\ell_2} \). Let \( \Omega := \{{\bf e}_0, \ldots, {\bf e}_{2\ell_2}\} \). There exist edges \( v_1, \ldots, v_q \in \Omega \setminus \{{\bf e}_0\} \) and edges \( f_1, \ldots, f_r \in \Omega \) such that
\[
{\bf x}_{Q_1} = v_1v_2\cdots v_q \cdot x_{j_1}\cdots x_{j_r},
\]
where the edges \( v_1, \ldots, v_q, f_1, \ldots, f_r \) are pairwise disjoint, and \( j_t \in f_t \) for \( t = 1, \ldots, r \). Note that \( 2q + r = p \) and \( q + r \leq \ell_2 \). For each \( t = 1, \ldots, r \), let \( i_t \in [2\ell_2 + 1] \) be such that \( f_t = x_{j_t}x_{i_t} \).

Since \( \{v_1, \ldots, v_q, f_1, \ldots, f_r, {\bf e}_{j_{p+1}}, \ldots, {\bf e}_{j_i}\} \subseteq \{u_1, \ldots, u_s\} \), we may assume, up to reindexing the elements \( u_1,\dots,u_s \), that
\[
u_1\cdots u_s = \left( v_1\cdots v_q f_1\cdots f_r {\bf e}_{j_{p+1}}\cdots {\bf e}_{j_i} \right) u_1\cdots u_{s - i + q}.
\]
It then follows that
\[
{\bf x}_Q \beta = \frac{\alpha^s}{u_1\cdots u_{s - i + q} \cdot (x_{i_1}\cdots x_{i_r}) \cdot (x_{j_{p+1}+1} \cdots x_{j_i + 1})}.
\]
Let us set \( F := \{i_1, \ldots, i_r, j_{p+1}+1, \ldots, j_i + 1\} \). Then \( F \) is an \( (i - q) \)-subset of \( [n] \). With this notation, we have $$ {\bf x}_Q \beta=\frac{\alpha^{i-q}}{(x_{i_1}\cdots x_{i_r}) \cdot (x_{j_{p+1}+1} \cdots x_{j_i + 1})} \cdot \frac{\alpha^{s-i+q}}{u_1\cdots u_{s - i + q}} \in \frac{\alpha^{i - q}}{{\bf x}_F} I^{s - i + q},$$ which implies that \( {\bf x}_Q \beta \) lies in the component corresponding to \( k = q \) on the right-hand side of \eqref{subset}.

The proof for \( \ell_1 > \ell_2 \) is analogous, so the argument is complete.
\end{proof}

In general, the converse of the inclusion \eqref{subset} fails to hold. For the converse inclusion to hold, it is necessary to remove certain components from the right-hand side of \eqref{subset}. This is accomplished by proving that any subset  $F \subseteq \set{\left(\frac{\alpha^s}{u_1\ldots u_s}\right)}$ with $|F| = \left\lceil \frac{n}{2} \right\rceil + k - 1$ contains at least $k$ pairwise disjoint edges among $u_1, \ldots, u_s$.

Note that for any integer \( k \), \( k \leq \left\lfloor \frac{i}{2} \right\rfloor \) if and only if \( k \leq \frac{i}{2} \), and \( k \geq \left\lceil \frac{i}{2} \right\rceil \) if and only if \( k \geq \frac{i}{2} \).

\begin{Proposition}\label{half1}
Let $ G $ be a cycle of length $ n $ and $ I $ be its complementary edge ideal. For all integers $ 1 \leq i < n $ and $ s \geq \left\lfloor \frac{i}{2} \right\rfloor $, the following inclusion holds:
\begin{equation}\label{refine}
\mathrm{HS}_i(I^s) \subseteq
\sum_{\substack{\max\left\{i - \left\lceil \frac{n}{2} \right\rceil +1, 0\right\} \leq k \leq \left\lfloor \frac{i}{2} \right\rfloor}}
\left(
  \sum_{\substack{F \subseteq [n] \\ |F| = i - 2k}}
  \frac{\alpha^{i - k}}{{\bf x}_F} \cdot I^{s - i + k}
\right).
\end{equation}
\end{Proposition}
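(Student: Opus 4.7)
The plan is to refine the inclusion from Lemma~\ref{half} by eliminating the summands with $k < i - \lceil n/2 \rceil + 1$ when $i \geq \lceil n/2 \rceil$. For $i < \lceil n/2 \rceil$ the prescribed lower bound vanishes and the statement reduces to Lemma~\ref{half}, so I restrict to $i \geq \lceil n/2 \rceil$. Fix a generator $\mathbf{x}_Q \beta$ of $\mathrm{HS}_i(I^s)$, where $\beta = \alpha^s/(u_1 \cdots u_s) \in \mathcal{G}(I^s)$ with each $u_j \in \mathcal{G}(I(G))$ and $Q \subseteq \set(\beta)$ has $|Q| = i$. The aim is to express $\mathbf{x}_Q \beta$ in the form $\frac{\alpha^{i-k}}{\mathbf{x}_F}\gamma$ with $\gamma \in I^{s-i+k}$, $|F| = i - 2k$, and $k \geq i - \lceil n/2 \rceil + 1$.

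The central step is the following combinatorial lemma, flagged in the paragraph preceding the proposition: any subset $F \subseteq \set(\beta)$ of size $\lceil n/2 \rceil + k - 1$ contains the endpoints of at least $k$ pairwise disjoint edges drawn from $T := \{u_1, \ldots, u_s\}$. Applying this with $k := i - \lceil n/2 \rceil + 1$ and $F := Q$ produces pairwise disjoint edges $v_1, \ldots, v_k \in T$ whose $2k$ endpoints all lie in $Q$. Since every vertex of $\set(\beta)$ is incident to some $T$-edge (which follows directly from Lemma~\ref{EO}), each of the remaining $i - 2k$ vertices of $Q$ can be covered by an edge $f_t \in T$. Extracting $(v_1 \cdots v_k)(f_1 \cdots f_{i-2k})$ from $u_1 \cdots u_s$ and simplifying, exactly as in the proof of Lemma~\ref{half}, yields $\mathbf{x}_Q \beta \in \frac{\alpha^{i-k}}{\mathbf{x}_F} I^{s-i+k}$ with $|F| = i - 2k$, placing $\mathbf{x}_Q \beta$ in the desired summand.

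To prove the combinatorial lemma, I combine two estimates. First, a cyclic-arc count bounds $\mu(G|_F)$ from below: since $\set(\beta) \subseteq [n-2]$ in the even case and $\set(\beta) \subseteq [n-1]$ in the odd case, the complement of $F$ in $V(G)$ contains a guaranteed gap --- the pair $\{n-1,n\}$ for even $n$, the vertex $n$ for odd $n$ --- so $G|_F$ decomposes into at most $n - |F| - 1$ paths (even cycle) or $n - |F|$ paths (odd cycle). Summing $\lfloor |V(P)|/2 \rfloor$ over the components $P$ then gives $\mu(G|_F) \geq |F| - \lceil n/2 \rceil + 1$. Second, this matching must be realized using only edges from $T$, which is the principal obstacle since an edge $\mathbf{e}_a$ of $C_n$ with both endpoints in $\set(\beta)$ need not itself belong to $T$. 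The resolution exploits the structural description from Lemma~\ref{EO}: on the interval portion $N_1(\beta) \cup N_2(\beta) = [L]$ the two partial $T$-matchings $\{\mathbf{e}_1, \mathbf{e}_3, \ldots, \mathbf{e}_{2\ell_1 - 1}\}$ and $\{\mathbf{e}_0, \mathbf{e}_2, \ldots, \mathbf{e}_{2\ell_2}\}$ of opposite parities are both present in $T$, while each vertex in $\{\min u_j\} \setminus \{n-1\}$ carries its own $T$-edge $\mathbf{e}_{\min u_j}$. A path-by-path selection in $G|_F$, aligned on each component with whichever parity matching is available and adjusted locally at parity transitions (when $\mathbf{e}_a \notin T$, the definitions of $\ell_1$ and $\ell_2$ force the neighbour edges $\mathbf{e}_{a-1}$ and $\mathbf{e}_{a+1}$ into $T$, permitting a swap), realizes the required matching entirely inside $T$.
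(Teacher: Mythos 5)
Your proof hinges on the combinatorial claim that any $F\subseteq\set(\beta)$ with $|F|=\lceil n/2\rceil+k-1$ contains at least $k$ pairwise disjoint edges from $T=\{u_1,\ldots,u_s\}$, but as stated this is false: it depends on which representation $\beta=\alpha^s/(u_1\cdots u_s)$ one fixes, and for a badly chosen representation the required $T$-matching need not exist. Take $n=8$, $s=5$, $T=\{\mathbf{e}_0,\mathbf{e}_1,\mathbf{e}_2,\mathbf{e}_4,\mathbf{e}_6\}$, and $\beta=\alpha^5/(\mathbf{e}_0\mathbf{e}_1\mathbf{e}_2\mathbf{e}_4\mathbf{e}_6)$. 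One computes $\ell_1=1$, $\ell_2=2$, $\set(\beta)=[6]$, so with $Q=[6]$ and $i=6$ the required value is $k=i-\lceil n/2\rceil+1=3$. The $T$-edges lying entirely inside $Q$ are $\mathbf{e}_1,\mathbf{e}_2,\mathbf{e}_4$, whose maximum matching has size $2$, not $3$. The Proposition itself is still satisfied here --- one checks $\mathbf{x}_Q\beta=\alpha^3\cdot\alpha^2/(\mathbf{e}_1\mathbf{e}_7)\in\alpha^3 I^2$ --- but only because one may pass to the other representation $u_1\cdots u_s=\mathbf{e}_1^2\mathbf{e}_3\mathbf{e}_5\mathbf{e}_7$ (using the unique relation of the even edge ring), after which $\{\mathbf{e}_1,\mathbf{e}_3,\mathbf{e}_5\}$ gives the three disjoint edges. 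Your argument never invokes this freedom and so fails on this generator.

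The second estimate in your combinatorial lemma is also not carried out correctly. The ``parity swap'' claim --- that whenever $\mathbf{e}_a\notin T$ has both endpoints in $\set(\beta)$, both neighbours $\mathbf{e}_{a-1},\mathbf{e}_{a+1}$ lie in $T$ --- does not follow from the definitions of $\ell_1,\ell_2$ (in general only one of them is forced into $T$), and even when both are in $T$ one of them may stick out of the component of $G|_F$ and hence be unavailable for the matching, exactly as in the example above ($\mathbf{e}_6\in T$ but $\mathbf{e}_6=\{6,7\}\not\subseteq Q$). The paper avoids this by fixing the specific covers $\{\mathbf{e}_1,\mathbf{e}_3,\ldots\}$ or $\{\mathbf{e}_0,\mathbf{e}_2,\ldots\}$ of $\set(\beta)$, applying pigeonhole, and showing that every fully-contained edge from that cover is automatically in $T$ (either via the $\ell$-chain or via the $\min(u_j)$ mechanism of Lemma~\ref{EO}), then splitting into cases according to $\ell_1$ versus $\ell_2$. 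To make your approach work you would need, at minimum, to (i) normalise the representation so that neither full parity chain $\{\mathbf{e}_0,\mathbf{e}_2,\ldots,\mathbf{e}_{2m-2}\}$ nor $\{\mathbf{e}_1,\ldots,\mathbf{e}_{2m-1}\}$ sits inside $T$ when that is avoidable, and (ii) give an actual path-by-path selection argument that pins down which neighbour of a missing edge is in $T$ and keeps the replacement inside the component. As written, neither point is established.
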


\begin{proof}
Set \(q:=\max\left\{i-\lceil\frac{n}{2}\rceil+1,0\right\}\). If \(i\le\lceil\frac{n}{2}\rceil-1\), then \(q=0\), so the right-hand side of \eqref{refine} coincides with the left-hand side of \eqref{subset}, and the result follows immediately from Lemma~\ref{half}. For \(i>\lceil\frac{n}{2}\rceil-1\), we focus on the case \(n=2m\) since the argument for \(n=2m+1\) is analogous. Since \(q=i-m+1\) in this case, we have \(i=m-1+q\). Take \(\beta\in\mathcal{G}(I^s)\) with \(i\le|\mathrm{set}(\beta)|\), and let \(Q\subseteq\mathrm{set}(\beta)\) be a subset of size \(i\); it suffices to show that \(\mathbf{x}_Q\beta\) lies in the right-hand side of \eqref{refine}.

To begin with, we write \( \beta = \frac{\alpha^s}{u_1 \cdots u_s} \), where \( u_j \in \mathcal{G}(I(G)) \) for all \( j = 1, \ldots, s \), and set \( T := \{u_1, \ldots, u_s\} \).

 Denote
$$\ell_1:=\max\left \{1\leq \ell\leq m-1:\ \{{\bf e}_1,{\bf e}_3,\ldots, {\bf e}_{2\ell-1}\}\subseteq T\right\}$$
and
$$\ell_2:=\max\left\{1\leq \ell\leq m-2:\ \{{\bf e}_0,{\bf e}_2,\ldots, {\bf e}_{2\ell}\}\subseteq T\right\}.$$
By convention, $\max \emptyset=-\infty$. Suppose first that $\ell_1>\ell_2$. By Lemma~\ref{EO}, it follows that
$$\mathrm{set}(\beta)=[2\ell_1]\cup \left(\{\min (u_j):\ j=1,\ldots,s\}\setminus \{n-1\}\right).$$

Note that the edges ${\bf e}_1,{\bf e}_3,\ldots,{\bf e}_{2m-3}$ are pairwise disjoint, and their union is the set $[2m-2]$. Since $Q\subseteq \mathrm{set}(\beta)\subseteq [2m-2]$ and $|Q|=m-1+q$, by the pigeonhole principle, there are  $k\geq q$ edges among ${\bf e}_1,{\bf e}_3,\ldots,{\bf e}_{2m-3}$ that are entirely contained in $Q$. Let these $k$ edges be ${\bf e}_{2j_1-1},{\bf e}_{2j_2-1},\ldots,{\bf e}_{2j_k-1}$ with $j_1<j_2<\cdots<j_k$.

There are three possible scenarios for the indices $j_t$ relative to $\ell_1$:
1. $\ell_1<j_1$,
2. $j_k\leq \ell_1$,
3. There exists some $1\leq r\leq k-1$ such that $j_r\leq \ell_1<j_{r+1}$ (indices straddle $\ell_1$).

We only prove the third scenario, as the first two follow by analogous reasoning. For $r+1\leq t\leq k$, since $j_t> \ell_1$, we have $2j_t-1>2\ell_1$. Given that $2j_t-1\in Q\subseteq \mathrm{set}(\beta)$, Lemma~\ref{EO} implies ${\bf e}_{2j_t-1}\in T$. For $1\leq t\leq r$, since $j_t\leq \ell_1$, the edge ${\bf e}_{2j_t-1}$ is in $\{{\bf e}_1,{\bf e}_3,\ldots,{\bf e}_{2\ell_1-1}\}$, which is contained in $T$ by the definition of $\ell_1$. Thus, ${\bf e}_{2j_1-1},\ldots,{\bf e}_{2j_k-1}$ are pairwise disjoint edges in $T$ whose union is contained in $Q$. Let \( Q_1 = Q \setminus ({\bf e}_{2j_1-1} \sqcup \cdots \sqcup {\bf e}_{2j_k-1}) \). Then \( |Q_1| = i - 2k \); write \( Q_1 = \{\theta_1, \ldots, \theta_p\} \) where \( p = i - 2k \). For each \( \theta_j \in Q_1 \):

- If \( \theta_j \leq 2\ell_1 \), choose \( v_j \in \{{\bf e}_1, {\bf e}_3, \ldots, {\bf e}_{2\ell_1-1}\} \) such that \( \theta_j \in v_j \);

- If \( \theta_j > 2\ell_1 \), set \( v_j = \{\theta_j, \theta_j + 1\} \).

It is straightforward to verify that each \( v_j \in T \). Let \( F = \bigsqcup_{j=1}^p \left( v_j \setminus \{\theta_j\} \right) \). After reindexing, we may write
\[
u_1 \cdots u_s = \left( {\bf e}_{2j_1-1} \cdots {\bf e}_{2j_k-1} \right) \left( v_1 \cdots v_p \right) \left( u_1 \cdots u_{s - i + k} \right).
\]
Then
\[
{\bf x}_Q \beta = \frac{\alpha^s}{(u_1 \cdots u_{s - i + k}) \cdot {\bf x}_F} \in \frac{\alpha^{i - k}}{{\bf x}_F} \cdot I^{s - i + k}.
\]
Hence, \( {\bf x}_Q \beta \) belongs to the right-hand side of \eqref{refine}.

Next, we consider the case when \( \ell_1 \leq \ell_2 \), which is much more complicated than the first case. By Lemma~\ref{EO}, we have
\[
\mathrm{set}(\beta) = [2\ell_2 + 1] \cup \left( \{\min (u_j) :\ j = 1, \ldots, s\} \setminus \{n - 1\} \right).
\]
Consider the \( m - 2 \) edges \( {\bf e}_2, {\bf e}_4, \ldots, {\bf e}_{2m - 4} \). Note that \( \mathrm{set}(\beta) \subseteq ({\bf e}_2 \sqcup {\bf e}_4 \sqcup \cdots \sqcup {\bf e}_{2m - 4}) \cup \{1, 2m - 2\} \), and since \( Q \subseteq \mathrm{set}(\beta) \), it follows that
\[
Q \subseteq ({\bf e}_2 \sqcup {\bf e}_4 \sqcup \cdots \sqcup {\bf e}_{2m - 4}) \cup \{1, 2m - 2\}.
\]
This inclusion implies
\[
|Q \cap ({\bf e}_2 \sqcup {\bf e}_4 \sqcup \cdots \sqcup {\bf e}_{2m - 4})| \geq |Q| - 2.
\]
Since \( |Q| = m - 1 + q \), substituting this in gives
\[
|Q \cap ({\bf e}_2 \sqcup {\bf e}_4 \sqcup \cdots \sqcup {\bf e}_{2m - 4})| \geq (m - 1 + q) - 2 = m - 3 + q.
\]
Equality holds if and only if \( Q \cap \{1, 2m - 2\} = \{1, 2m - 2\} \), i.e., \( \{1, 2m - 2\} \subseteq Q \).

If \( |Q \cap ({\bf e}_2 \sqcup {\bf e}_4 \sqcup \cdots \sqcup {\bf e}_{2m - 4})| \geq m - 2 + q \), then by the pigeonhole principle, there are at least \( q \) edges among \( {\bf e}_2, {\bf e}_4, \ldots, {\bf e}_{2m - 4} \) that are entirely contained in \( Q \). The proof then proceeds analogously to the case where \( \ell_1 > \ell_2 \).

Assume now that \( |Q \cap ({\bf e}_2 \sqcup {\bf e}_4 \sqcup \cdots \sqcup {\bf e}_{2m-4})| = m - 3 + q \), which implies \( \{1, 2m - 2\} \subseteq Q \). By the pigeonhole principle, there are at least \( q - 1 \) edges among \( {\bf e}_2, {\bf e}_4, \ldots, {\bf e}_{2m-4} \) that are entirely contained in \( Q \). If there are more than \( q - 1 \) such edges, we are done (as we would have at least \( q \) edges). Thus, we may assume there are exactly \( q - 1 \) such edges, denoted \( {\bf e}_{2j_1}, \ldots, {\bf e}_{2j_{q-1}} \) where \( j_1 < j_2 < \cdots < j_{q-1} \). For the remaining edges \( {\bf e}_{2t} \) with \( t \in [m-2] \setminus \{j_1, \ldots, j_{q-1}\} \), it follows that \( |Q \cap {\bf e}_{2t}| = 1 \).

The indices \( j_1, \ldots, j_{q-1} \) satisfy one of three scenarios relative to \( \ell_2 \):
1. \( \ell_2 < j_1 \) (all indices are greater than \( \ell_2 \)),
2. \( j_{q-1} \leq \ell_2 \) (all indices are less than or equal to \( \ell_2 \)),
3. There exists some \( 1 \leq r \leq q-2 \) such that \( j_r \leq \ell_2 < j_{r+1} \) (indices straddle \( \ell_2 \)).

We focus on the third scenario (the others follow similarly, with minor adjustments). First, we claim \( 2\ell_2 + 2 \notin Q \): if \( 2\ell_2 + 2 \in Q \), then \( 2\ell_2 + 2 \in \mathrm{set}(\beta) \), so Lemma~\ref{EO} implies \( {\bf e}_{2\ell_2 + 2} \in T \), contradicting the maximality of \( \ell_2 \).

Set \( p = q - 1 - r \). Since \( 2m - 2 \in Q \), we calculate the size of \( Q \cap [2\ell_2 + 3, 2m - 2] \):
\[
|Q \cap [2\ell_2 + 3, 2m - 2]| = (m - 1 + q) - |Q \cap [1, 2\ell_2 + 2]|.
\]
By construction, \( |Q \cap [1, 2\ell_2 + 2]| = |Q \cap [1, 2\ell_2 + 1]| = \ell_2 + r + 1 \) (including \( 1 \), plus \( 2r \) elements from the \( r \) edges \( {\bf e}_{2j_1}, \ldots, {\bf e}_{2j_r} \subseteq [2\ell_2 + 2] \), and \( \ell_2 - r \) elements from \( \ell_2 - r \) edges \( {\bf e}_{2t} \) with \( 1 \leq t \leq \ell_2 \) and \( t \notin \{j_1, \ldots, j_r\} \)). Substituting this gives:
\[
|Q \cap [2\ell_2 + 3, 2m - 2]| = (m - 1 + q) - (\ell_2 + r + 1) = m - \ell_2 - 1 + p,
\]
(using \( p = q - 1 - r \) and simplifying).

Next, consider the \( m - 2 - \ell_2 \) edges \( {\bf e}_{2\ell_2 + 3}, {\bf e}_{2\ell_2 + 5}, \ldots, {\bf e}_{2m - 3} \), whose disjoint union is exactly \( [2\ell_2 + 3, 2m - 2] \). Since \( |Q \cap [2\ell_2 + 3, 2m - 2]| = (m - 2 - \ell_2) + (p + 1) \), the pigeonhole principle guarantees at least \( p + 1 \) of these edges are fully contained in \( Q \). These \( p + 1 \) edges, together with the \( r \) edges \( {\bf e}_{2j_1}, \ldots, {\bf e}_{2j_r} \), form \( (p + 1) + r = q \) pairwise disjoint edges in \( T \) whose union is contained in \( Q \). We denote such $q$ edges by $f_1,\ldots,f_q$.  Let \( Q_1 \) be the remainder of \( Q \) after removing all vertices contained in these \( q \) edges.

Write \( Q_1 = \{\theta_1, \ldots, \theta_p\} \) where \( p = i - 2q\). For each \( \theta_j \in Q_1 \):

- If \( \theta_j \leq 2\ell_1 \), choose \( v_j \in \{{\bf e}_0, {\bf e}_2, \ldots, {\bf e}_{2\ell_2}\} \) such that \( \theta_j \in v_j \);

- If \( \theta_j > 2\ell_2+1 \), set \( v_j = \{\theta_j, \theta_j + 1\} \).

It is straightforward to verify that each \( v_j \in T \). Let \( F = \bigsqcup_{j=1}^p \left( v_j \setminus \{\theta_j\} \right) \). Then $|F|=p=i-2q$.  After reindexing, we may write
\[
u_1 \cdots u_s = \left( f_1\cdots f_q \right) \left( v_1 \cdots v_p \right) \left( u_1 \cdots u_{s - i + q} \right).
\]
Then
\[
{\bf x}_Q \beta = \frac{\alpha^s}{(u_1 \cdots u_{s - i + q}) \cdot {\bf x}_F} \in \frac{\alpha^{i - q}}{{\bf x}_F} \cdot I^{s - i + q}.
\]
Hence, \( {\bf x}_Q \beta \) belongs to $k=q$ component of the right-hand side of \eqref{refine}. This completes the proof.
\end{proof}

To obtain  the converse inclusion, we need the following lemma.

\begin{Lemma} \label{lasttwo}
Let \( G \) be a cycle of length \( n \) and \( I \) be its complementary edge ideal. Suppose \( n = 2m \) or \( n = 2m+1 \). Given a subset \( F \subseteq [n] \) with \( |F| = p \) and an integer \( k \geq 0 \) satisfying \( k + p < \frac{n}{2} \), set \( i := 2k + p \). Then there exist pairwise distinct edges \( u_1, \ldots, u_{k+p} \) and a subset \( E \subseteq \mathrm{set}\left( \frac{\alpha^{k+p}}{u_1\cdots u_{k+p}} \right) \) with \( |E| = i \) such that
\[
\frac{\alpha^{k+p}}{\mathbf{x}_F} = \mathbf{x}_E \cdot \frac{\alpha^{k+p}}{u_1\cdots u_{k+p}} \in \mathrm{HS}_i(I^{k+p}).
\]
\end{Lemma}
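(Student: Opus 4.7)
The plan is to exhibit $\beta\in\mathcal{G}(I^{k+p})$ as $\alpha^{k+p}/(u_1\cdots u_{k+p})$ where $M:=\{u_1,\dots,u_{k+p}\}$ is a matching (pairwise vertex--disjoint edges) of the cycle $C_n$ with $F\subseteq V(M)$, and then to set $E:=V(M)\setminus F$. Because $M$ is a matching, $u_1\cdots u_{k+p}$ is the squarefree monomial $\mathbf{x}_{V(M)}=\mathbf{x}_E\mathbf{x}_F$ (with $E\cap F=\emptyset$), so the required identity $\alpha^{k+p}/\mathbf{x}_F=\mathbf{x}_E\cdot\beta$ holds and $|E|=2(k+p)-p=2k+p=i$. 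The first $k$ edges can be chosen to have both endpoints in $E$ and the last $p$ to contain one vertex of $F$ each, matching the grouping $u_1,\dots,u_k,u_{k+1},\dots,u_{k+p}$ in the statement. The real task is then to find such a matching so that $E\subseteq\mathrm{set}(\beta)$.

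The model case is $F\subseteq[2(k+p)]$: take $M=\{\mathbf{e}_1,\mathbf{e}_3,\dots,\mathbf{e}_{2(k+p)-1}\}$. Then $V(M)=[2(k+p)]$ and $E=[2(k+p)]\setminus F$. Since $\{\mathbf{e}_1,\mathbf{e}_3,\dots,\mathbf{e}_{2(k+p)-1}\}\subseteq T$, and $k+p$ is within the allowed range for $\ell$ in the definition of $N_1$ (namely $\ell\le m-1$ if $n=2m$ and $\ell\le m$ if $n=2m+1$, both guaranteed by the hypothesis $k+p<n/2$), Lemma~\ref{EO} yields $N_1(\beta)\supseteq[2(k+p)]$, whence $E\subseteq\mathrm{set}(\beta)$.

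For general $F$, I would modify the alternating matching by local swaps: for each $f\in F\setminus[2(k+p)]$, remove one of the odd edges $\mathbf{e}_{2j-1}$ from $M$ and insert a cycle edge incident to $f$ (either $\{f-1,f\}$ or $\{f,f+1\}$, cyclically), chosen to preserve the matching property. The constraint $k+p<n/2$ provides enough free vertices in $[n]\setminus[2(k+p)]$ to absorb all such swaps without collisions. After the swaps there is still an initial alternating block $\{\mathbf{e}_1,\mathbf{e}_3,\dots,\mathbf{e}_{2\ell^*-1}\}\subseteq M$ for some $\ell^*\le k+p$, so by Lemma~\ref{EO} we get $[2\ell^*]\subseteq N_1(\beta)$, which captures the part of $E$ lying in $[2\ell^*]$; the remaining $E$--vertices, which lie outside $[2\ell^*]$, appear as $\min(u_j)$ for the newly inserted edges, and hence belong to $\{\min(u_j):j\}\setminus\{n-1\}\subseteq\mathrm{set}(\beta)$.

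The main obstacle will be the case analysis needed for configurations of $F$ that stress the above scheme: when $F$ contains several consecutive vertices forming blocks, when $F$ straddles the cyclic boundary (so one must use the $N_2(\beta)$ strand built from $\mathbf{e}_0,\mathbf{e}_2,\dots$ rather than $N_1(\beta)$), or when the exclusion of $n-1$ from $\{\min(u_j)\}\setminus\{n-1\}$ forces $n-1$ to be covered by $N_1(\beta)$ or $N_2(\beta)$ rather than by a $\min$. In each such situation one must carefully decide which odd edge to remove, which neighbour of $f$ to pair with, and whether to work with the odd or the even alternating pattern. The strict bound $k+p<n/2$ is precisely what makes these swaps feasible in every case, and recording this bookkeeping cleanly is the technical core of the proof.
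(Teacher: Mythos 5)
Your proposal takes a genuinely different route from the paper's and, as written, has a real gap. The paper does \emph{not} require $\{u_1,\ldots,u_{k+p}\}$ to be a matching: for $n\notin F$, writing $F=\{j_1<\cdots<j_p\}$, it puts
\[
\beta \;=\; \frac{\alpha^{k+p}}{\bigl(\mathbf{e}_1\mathbf{e}_3\cdots\mathbf{e}_{2k+2t-1}\bigr)\bigl(\mathbf{e}_{j_{t+1}-1}\cdots\mathbf{e}_{j_p-1}\bigr)},
\]
where $t$ is the longest prefix of the sorted $F$ that fits inside the initial alternating block, and if $j_{q+1}=j_q+1$ the second-batch edges $\mathbf{e}_{j_q-1}$ and $\mathbf{e}_{j_{q+1}-1}$ share the vertex $j_q$. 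Your requirement that the denominator be squarefree makes the multidegree identity $\mathbf{x}_E\beta=\alpha^{k+p}/\mathbf{x}_F$ immediate from $E=V(M)\setminus F$, which is a pleasant simplification, but it shifts all the work to choosing $M$ so that $E\subseteq\mathrm{set}(\beta)$, and that step is missing.

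Concretely, two of your claims fail in general. First, "after the swaps there is still an initial alternating block $\{\mathbf{e}_1,\ldots,\mathbf{e}_{2\ell^*-1}\}\subseteq M$" is not automatic: a swap that removes $\mathbf{e}_1$ destroys the block entirely, so $N_1(\beta)=\emptyset$. Second, "the remaining $E$-vertices appear as $\min(u_j)$" is wrong whenever an inserted edge is the forward one $\{f,f+1\}$ with $f+1\notin F$: then $f+1\in E$ is the $\max$, not the $\min$, of that edge. Your rule leaves the choice of neighbour free, and a legal choice really does break the lemma. Take $n=20$, $k=0$, $F=\{9,10\}$: starting from $\{\mathbf{e}_1,\mathbf{e}_3\}$, one admissible swap sequence removes $\mathbf{e}_3$, inserts $\{8,9\}$ for $f=9$, then (to preserve the matching) removes $\mathbf{e}_1$ and inserts $\{10,11\}$ for $f=10$, giving $M=\{\mathbf{e}_8,\mathbf{e}_{10}\}$ and $E=\{8,11\}$; but Lemma~\ref{EO} gives $\mathrm{set}\bigl(\alpha^2/(\mathbf{e}_8\mathbf{e}_{10})\bigr)=\{8,10\}$, so $11\notin\mathrm{set}(\beta)$. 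The working choice here is $\{\mathbf{e}_1,\mathbf{e}_9\}$, which keeps the alternating block and lets the single edge $\{9,10\}$ absorb both $F$-vertices. Supplying the precise selection rule -- which neighbour of $f$ to insert, which odd edge to delete, when two consecutive $F$-vertices must share one matching edge -- and then verifying $E\subseteq\mathrm{set}(\beta)$ against Lemma~\ref{EO} is the missing core of your argument; the paper's $t$-split and uniformly backward edges $\mathbf{e}_{j_q-1}$ accomplish exactly that, at the cost of abandoning the matching structure.
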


\begin{proof}
We only consider the case \( n = 2m \); the case \( n = 2m + 1 \) is analogous. Write \( F= \{j_1 < j_2 < \cdots < j_p\} \). We analyze two cases:

(1) \( n \notin F \): Define \[ t := \max\left\{a : j_1 \leq 2k + 2,\ j_2 \leq 2k + 4,\ \ldots,\ j_a \leq 2k + 2a\right\}. \]
By convention, set \( t := 0 \) when \( j_1 > 2k + 2 \). Then \( 1 \leq j_1 < \cdots < j_t \leq 2k + 2t \leq 2m - 2 \) and \( j_{p} > \cdots > j_{t+1} > 2k + 2t + 2 \). Set
\[
\beta := \frac{\alpha^{k+p}}{(\mathbf{e}_1\mathbf{e}_3 \cdots \mathbf{e}_{2k+2t-1})(\mathbf{e}_{j_{t+1}-1} \cdots \mathbf{e}_{j_p-1})},
\]
and \( E := ([2k + 2t] \setminus \{j_1, \ldots, j_t\}) \sqcup \{j_{t+1} - 1, \ldots, j_p - 1\}. \)
By Lemma~\ref{EO}, \( E \subseteq \mathrm{set}(\beta) \). Moreover, \( |E| = 2k + p = i \), and \( \mathbf{x}_E \cdot \beta = \frac{\alpha^{k+p}}{\mathbf{x}_F} \in \mathrm{HS}_i(I^{k+p}) \), as required.

(2) \( n \in F \): Define \( t := \max\left\{a : j_1 \leq 2k + 3,\ j_2 \leq 2k + 5,\ \ldots,\ j_a \leq 2k + 2a + 1\right\},\)
and set \( t := 0 \) when \( j_1 > 2k + 3 \). Then \( 0 \leq t \leq p - 1 \), \( \{j_1, \ldots, j_t\} \subseteq [2k + 2t + 1] \subseteq [2m - 1] \), and \( j_{t+1} > 2k + 2t + 3 \). Set
\[
\beta := \frac{\alpha^{k+p}}{(\mathbf{e}_0\mathbf{e}_2 \cdots \mathbf{e}_{2k+2t})(\mathbf{e}_{j_{t+1}-1} \cdots \mathbf{e}_{j_{p-1}-1})}.
\]
and \( E := \left( [2k + 2t + 1] \setminus \{j_1, \ldots, j_t\} \right) \sqcup \{j_{t+1} - 1, \ldots, j_{p-1} - 1\}. \)
Then \( |E| = 2k + p = i \), and by Lemma~\ref{EO}, \( E \subseteq \mathrm{set}(\beta) \). It follows that \( \frac{\alpha^{k+p}}{\mathbf{x}_F} = \mathbf{x}_E \cdot \beta \in \mathrm{HS}_i(I^{k+p}) \), as required.
\end{proof}

We now are ready to present the main result of this subsection.

\begin{Theorem}\label{express1}
Let $G$ be a cycle of length $n$ and $I$ be its complementary edge ideal. Then, for all integers $1 \leq i < n$ and $s \geq \left\lfloor \frac{i}{2} \right\rfloor$, the following equality holds:
\begin{equation}\label{main1}
\mathrm{HS}_i(I^s)=\sum_{\substack{\max\left\{i - \left\lceil \frac{n}{2} \right\rceil + 1, 0\right\} \leq k \leq \left\lfloor \frac{i}{2} \right\rfloor}}
\left(
  \sum_{\substack{F \subseteq [n] \\ |F| = i - 2k}}
  \frac{\alpha^{i - k}}{\mathbf{x}_F} \cdot I^{s - i + k}
\right).
\end{equation}
\end{Theorem}

\begin{proof}
For any $k$ satisfying
\[
\max\left\{i - \left\lceil \frac{n}{2} \right\rceil + 1, 0\right\} \leq k \leq \left\lfloor \frac{i}{2} \right\rfloor,
\]
let $p = i - 2k$. Then $k + p = i - k$, and it follows that $k + p \leq \left\lceil \frac{n}{2} \right\rceil - 1$. By Lemma~\ref{lasttwo}, for any $F \subseteq [n]$ with $|F| = p$, we have $\frac{\alpha^{i - k}}{\mathbf{x}_F} \in \mathrm{HS}_i(I^{i - k})$. Applying \cite[Proposition 1.3]{FQ1}, we deduce that
\[
\frac{\alpha^{i - k}}{\mathbf{x}_F} \cdot I^{s - i + k} \subseteq  \mathrm{HS}_i(I^s).
\]
Thus, every term in the right-hand side is contained in $\mathrm{HS}_i(I^s)$, whence the inclusion $\supseteq$ holds. The reverse inclusion follows immediately from Proposition~\ref{half}. This completes the proof.
\end{proof}
We remark that Equality~(\ref{main1}) still holds even when \(s < \left\lfloor \frac{i}{2} \right\rfloor\). In this case, \( \mathrm{HS}_i(I^s) = 0 \) by Theorem~\ref{Cycle}, and the right-hand side of this equality is also zero, since \( s - i + k < 0 \) for all \( k < \left\lfloor \frac{i}{2} \right\rfloor \).

Theorem~\ref{express1}  yields the generating degree of $\mathrm{HS}_i(\mathcal{R}(I))$.
\begin{Corollary}\label{degree}
Let $ G $ be a cycle of length $n$ and $ I $ be its complementary edge ideal. For all integers $ 1 \leq i < \frac{n}{2} $, the $i$-th homological shift algebra $\mathrm{HS}_i(\mathcal{R}(I))$ is generated in degree $\leq i-q$, where $q=\max\left\{i - \left\lceil \frac{n}{2} \right\rceil + 1, 0\right\}$.
\end{Corollary}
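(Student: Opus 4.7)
The plan is to deduce this corollary immediately from the explicit formula in Theorem~\ref{express1}, together with the degree-tracking provided by Lemma~\ref{lasttwo}. Fix $i \geq 1$ and set $q = \max\{i - \lceil n/2 \rceil + 1,\, 0\}$. By Theorem~\ref{express1}, for every $s \geq \lfloor i/2 \rfloor$ one has
\[
\mathrm{HS}_i(I^s)\;=\;\sum_{q \,\leq\, k \,\leq\, \lfloor i/2 \rfloor}\ \sum_{\substack{F \subseteq [n]\\|F|=i-2k}} \frac{\alpha^{i-k}}{\mathbf{x}_F}\cdot I^{s-i+k}.
\]

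Next I would verify that each factor $\alpha^{i-k}/\mathbf{x}_F$ appearing in this formula actually lies in $\mathrm{HS}_i(I^{i-k})$. Setting $p = i - 2k$, we have $|F| = p$ and $k + p = i - k$, so the condition $k + p \leq \lceil n/2 \rceil - 1$ required by Lemma~\ref{lasttwo} is equivalent to $k \geq i - \lceil n/2 \rceil + 1$, which in turn follows from $k \geq q$. Lemma~\ref{lasttwo} therefore gives $\alpha^{i-k}/\mathbf{x}_F \in \mathrm{HS}_i(I^{i-k})$. Consequently each summand satisfies
\[
\frac{\alpha^{i-k}}{\mathbf{x}_F}\cdot I^{s-i+k}\;\subseteq\;I^{s-(i-k)}\cdot \mathrm{HS}_i(I^{i-k}),
\]
that is, it sits inside the degree-$s$ piece of the $\mathcal{R}(I)$-submodule of $\mathrm{HS}_i(\mathcal{R}(I))$ generated by $\mathrm{HS}_i(I^{i-k})$. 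Since $k \geq q$ forces $i - k \leq i - q$, the full sum lies in the submodule generated by $\bigoplus_{t \leq i - q}\mathrm{HS}_i(I^t)$.

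It remains to deal with the range $s < \lfloor i/2 \rfloor$ which is not covered by Theorem~\ref{express1}. For such $s$, Theorem~\ref{Cycle} yields $\mathrm{pd}(I^s) \leq 2s < i$, so $\mathrm{HS}_i(I^s) = 0$ and these degrees contribute nothing. An elementary check (splitting on $n$ even or odd and using $i < n$) confirms that $\lfloor i/2 \rfloor \leq i - q$, so the bound is consistent. Combining everything, $\mathrm{HS}_i(\mathcal{R}(I))$ is generated as an $\mathcal{R}(I)$-module in degree at most $i - q$.

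I do not expect any serious obstacle: once Theorem~\ref{express1} and Lemma~\ref{lasttwo} are in hand, the argument is a straightforward index-matching. The only point requiring care is the verification that the lower bound $k \geq q$ in Theorem~\ref{express1} is \emph{precisely} the numerical hypothesis Lemma~\ref{lasttwo} imposes on each generating monomial of the form $\alpha^{i-k}/\mathbf{x}_F$; beyond this, the proof is immediate.
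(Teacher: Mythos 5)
Your proof is correct and is precisely the ``immediate consequence'' the paper asserts: decompose $\mathrm{HS}_i(I^s)$ via Theorem~\ref{express1}, then invoke Lemma~\ref{lasttwo} (with $p=i-2k$, noting $k+p=i-k$, and $k\ge q$ is exactly the bound $k+p\le\lceil n/2\rceil-1$) to place each factor $\alpha^{i-k}/\mathbf{x}_F$ inside $\mathrm{HS}_i(I^{i-k})$, so each summand sits in $I^{s-(i-k)}\cdot\mathrm{HS}_i(I^{i-k})$ with $i-k\le i-q$. Two incidental observations: in the range $1\le i<n/2$ as literally stated one has $q=0$, so the bound is just $i$; and your argument in fact works verbatim for all $1\le i<n$ (the range of Theorem~\ref{express1}), which is the version appearing in the paper's introduction, suggesting the $i<n/2$ hypothesis in the corollary's statement is a typo.
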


\begin{proof}
It is straightforward to verify that $\mathrm{HS}_i(I^s)=I^{s-i+q}\cdot  \mathrm{HS}_i(I^{i-q})$ for any $s\geq i-q$. The desired conclusion then follows directly from this identity.
\end{proof}

\begin{Remark}\em
Let \( G \) be a cycle of length \( n \). For each \( j = 1, \ldots, n \), let \( L_j \) denote the path graph with vertex set \( [n] \), obtained by deleting the edge \( \{j-1, j\} \) from \( G \). Here, we adopt the convention that vertex \( 0 \) corresponds to vertex \( n \) (so the edge deleted for \( L_1 \) is \( \{n, 1\} \)). Note that \( L_1, \ldots, L_n \) are exactly all the spanning trees of \( G \). For each \( j \), let \( I_j \) denote the complementary edge ideal of \( L_j \). By Lemma~\ref{HSsubset}, we immediately obtain the inclusion
\[
\sum_{j=1}^n \mathrm{HS}_i(I_j^s) \subseteq \mathrm{HS}_i(I^s).
\]
A natural question then arises: {\bf Which component of the decomposition of \( \mathrm{HS}_i(I^s) \) given in Formula (\ref{main1}) does the sum \( \sum_{j=1}^n \mathrm{HS}_i(I_j^s) \) correspond to?}

Suppose \( 1 \leq i < \frac{n}{2} \). For this range of \( i \), \( \max\left\{i - \left\lceil \frac{n}{2} \right\rceil + 1, 0\right\} = 0 \), since \( i < \frac{n}{2} \leq \left\lceil \frac{n}{2} \right\rceil \) implies \( i - \left\lceil \frac{n}{2} \right\rceil + 1 < 1 \). Theorem~\ref{express1} thus simplifies to
\[
\mathrm{HS}_i(I^s) = \sum_{0 \leq k \leq \left\lfloor \frac{i}{2} \right\rfloor} \left( \sum_{\substack{F \subseteq [n] \\ |F| = i - 2k}} \frac{\alpha^{i - k}}{\mathbf{x}_F} \cdot I^{s - i + k} \right).
\]
We claim that \( \sum_{j=1}^n \mathrm{HS}_i(I_j^s) \) is contained in the \( k=0 \) component of the above decomposition. In other words, the following inclusion holds for all \( s \geq i \):
\[
\sum_{j=1}^n \mathrm{HS}_i(I_j^s) \subseteq \sum_{\substack{F \subseteq [n] \\ |F| = i}} \frac{\alpha^i}{\mathbf{x}_F} \cdot I^{s - i}.
\]
Moreover, this inclusion becomes an equality when \( s = i \).

First, we verify the inclusion \( \subseteq \). By Theorem~\ref{3.1}, \( \mathrm{HS}_i(I_j^s) \) has the following decomposition:
\[
\mathrm{HS}_i(I_1^s) = \sum_{\substack{F \subseteq [n] \setminus \{1, n\} \\ |F| = i}} \frac{\alpha^i}{\mathbf{x}_F} I_1^{s-i},
\]
and for \( j = 2, \ldots, n \),
\[
\mathrm{HS}_i(I_j^s) = \sum_{\substack{F \subseteq [n] \setminus \{j-1, j\} \\ |F| = i}} \frac{\alpha^i}{\mathbf{x}_F} I_j^{s-i}.
\]
Since \( I_j \subseteq I \) for each \( j \), we have \( I_j^{s-i} \subseteq I^{s-i} \). This implies that every term in \( \sum_{j=1}^n \mathrm{HS}_i(I_j^s) \) is contained in the \( k=0 \) component of \( \mathrm{HS}_i(I^s) \), proving the inclusion \( \subseteq \).

Next, we establish the reverse inclusion \( \supseteq \) in the case \( s = i \). Recall that a \textit{vertex cover} of a graph \( G \) is a subset \( C \subseteq V(G) \) such that every edge of \( G \) is incident to at least one vertex in \( C \). Since \( i < \frac{n}{2} \) and the minimum vertex cover of a cycle \( G \) has size \( \left\lceil \frac{n}{2} \right\rceil \), \( F \) cannot be a vertex cover of \( G \). By definition of vertex covers, there exists at least one edge \( \{j_0-1, j_0\} \) of \( G \) such that \( F \cap \{j_0-1, j_0\} = \emptyset \). It follows that \( F \subseteq [n] \setminus \{j_0-1, j_0\} \), so
\[
\left(\frac{\alpha^i}{\mathbf{x}_F}\right) \subset \mathrm{HS}_i(I_{j_0}^i).
\]
Thus, every term in the \( k=0 \) component (when \( s=i \)) is contained in \( \sum_{j=1}^n \mathrm{HS}_i(I_j^i) \), proving the inclusion \( \supseteq \). Combining both inclusions, the claim is established.
\end{Remark}

For the largest \( i \) satisfying \( \mathrm{HS}_i(\mathcal{R}(I)) \neq 0 \), \( \mathrm{HS}_i(I^s) \) admits a simple expression.

\begin{Corollary}
Let \( G \) be a cycle of length \( n \) and \( I \) denote its complementary edge ideal.

{\em (1)} If \( n = 2m + 1 \), then \( \mathrm{HS}_{2m}(I^s) = \alpha^m \cdot I^{s - m} \) for all integers \( s \geq m \). In particular, \( \mathrm{HS}_{2m}(\mathcal{R}(I)) \) is generated in degree \( m \).

{\em (2)} If \( n = 2m \), then \( \mathrm{HS}_{2m - 2}(I^s) = \alpha^{m - 1} \cdot I^{s - m + 1} \) for all integers \( s \geq m - 1 \). In particular, \( \mathrm{HS}_{2m - 2}(\mathcal{R}(I)) \) is generated in degree \( m - 1 \).
\end{Corollary}

\begin{proof}
(1) In Formula (\ref{main1}) of Theorem~\ref{express1}, the range of \( k \) is given by
\[
\max\left\{ i - \left\lceil \frac{n}{2} \right\rceil + 1,\ 0 \right\} \leq k \leq \left\lfloor \frac{i}{2} \right\rfloor.
\]
For the case where \( n = 2m + 1 \) and \( i = 2m \), we have \( \left\lceil \frac{n}{2} \right\rceil = m + 1 \). The left-hand bound of the above inequality simplifies to
\[
\max\{2m - (m + 1) + 1,\ 0\} = \max\{m,\ 0\} = m,
\]
while the right-hand bound is \( \left\lfloor \frac{2m}{2} \right\rfloor = m \). Therefore, \( k = m \) is the only feasible value of \( k \), so \( i - 2k = 0 \). The equality \( \mathrm{HS}_{2m}(I^s) = \alpha^m \cdot I^{s - m} \) thus follows immediately from Formula (\ref{main1}), and it then follows that \( \mathrm{HS}_{2m}(\mathcal{R}(I)) \) is generated precisely in degree \( m \).

The proof of part (2) is analogous to that of part (1).
\end{proof}

\medskip
\hspace{-6mm} {\bf Acknowledgments}

	\vspace{3mm}
	\hspace{-6mm}
	The first author is  supported by the National Natural Science Foundation of China (No. 11971338).
	The third author is  supported by the Natural Science Foundation of Jiangsu Province (No. BK20221353) and the National Natural Science Foundation of China (No. 12471246). The authors wish to express their gratitude to the computer algebra system  \cite{F2}, which provided a wealth of examples that facilitated the development of our research ideas and the verification of our results.

\end{document}